\definecolor{ao(english)}{rgb}{0.0, 0.5, 0.0}
\numberwithin{equation}{section}
\theoremstyle{plain} %% This is the default, anyway
\newtheorem{thm}{Theorem}[section]
\newtheorem{cor}{Corollary}[section]
\newtheorem{lem}{lem}[section]
\newtheorem{prop}{Proposition}[section]
\theoremstyle{remark}
\newtheorem{rem}{Remark}[section]
\begin{document}

\title[Deep relaxation of SGD and singular perturbations]{Deep Relaxation of Controlled Stochastic Gradient Descent via Singular Perturbations}
\thanks{The first author is member of the Gruppo Nazionale per l'Analisi Matematica, la Probabilit\`a e le loro Applicazioni (GNAMPA) of the Istituto Nazionale di Alta Matematica (INdAM). He also  participates in the King Abdullah University of Science and Technology (KAUST) project CRG2021-4674 ``Mean-Field Games: models, theory, and computational aspects'', 
and in the project funded by the EuropeanUnion-NextGenerationEU under the National Recovery and Resilience Plan (NRRP), Mission 4 Component 2 Investment 1.1 - Call PRIN 2022 No. 104 of February 2, 2022 of Italian Ministry of University and Research; Project 2022W58BJ5 (subject area: PE - Physical Sciences and Engineering)  ``PDEs and optimal control methods in mean field games, population dynamics and multi-agent models".
\\
\indent
The second author was funded by the Deutsche Forschungsgemeinschaft (DFG, German Research Foundation) – Projektnummer 320021702/GRK2326 – Energy, Entropy, and Dissipative Dynamics (EDDy). Some of the results of this paper are part of his Ph.D. thesis \cite{kouhkouh22phd} which was conducted when he was a Ph.D. student at the University of Padova.
}

\author{Martino Bardi}
\address {Martino Bardi \newline %\indent
{Department of Mathematics “T. Levi-Civita”, %\newline \indent 
University of Padova, via Trieste, 63}, 
%\newline \indent
{I-35121 Padova, Italy}
}
\email{\texttt{bardi@math.unipd.it}}

\author{Hicham Kouhkouh}
\address{Hicham Kouhkouh \newline %\indent
{RWTH Aachen University, Institut f\"ur Mathematik,  %\newline \indent 
RTG Energy, Entropy, and Dissipative Dynamics, %\newline \indent
Templergraben 55 (111810),}
 %\newline \indent 
{52062, Aachen, Germany} \newline %\indent
\texttt{Current address: } %\newline %\indent
Department of Mathematics and Scientific Computing, %\newline \indent 
NAWI, University of Graz, %\newline \indent
8010, Graz, Austria 
}

\email{\texttt{kouhkouh@eddy.rwth-aachen.de}, \quad \texttt{hicham.kouhkouh@uni-graz.at}}

\date{\today}

\begin{abstract}
We consider a singularly perturbed system of stochastic differential equations proposed by Chaudhari et al. (Res. Math. Sci. 2018) to approximate the Entropic Gradient Descent in the optimization of deep neural networks, via homogenisation. We embed it in a much larger class of two-scale stochastic control problems and rely on convergence results for Hamilton-Jacobi-Bellman equations with unbounded data proved recently by ourselves ({ESAIM Control Optim. Calc. Var. 2023}). We show that the limit of the value functions is itself the value function of an effective control problem with extended controls, and that the trajectories of the perturbed system converge in a suitable sense to the trajectories of the limiting effective control system. These rigorous results improve the understanding of the convergence of the algorithms used by  Chaudhari et al., as well as of their possible extensions where some tuning parameters are modeled as dynamic controls. 
\end{abstract}

\subjclass[MSC]{93C70, 68T07, 90C26, 93E20}

\keywords{Stochastic Gradient Descent, local entropy, deep learning, deep neural networks, singular perturbations, stochastic optimal control, multiscale systems, Hamilton-Jacobi-Bellman equations, homogenisation, averaging method.} 

\maketitle

%\tableofcontents

\section{Introduction}
This paper develops some stochastic control and Hamilton-Jacobi-Bellman methods related to general unconstrained non-convex optimization problems
\begin{equation}
\label{optimization problem}
    \min_{x\in\mathds{R}^{n}} \;\phi(x)
\end{equation}
where $\phi$ is a scalar function,  usually highly nonlinear. This very classical problem received recently a considerable amount of new attention in connection with its role in training deep neural networks. We do not enter here in the numerous practical applications of these problems and in the particular forms that the loss function $\phi$ may have,  and refer instead to the survey \cite{lecun2015deep}, the papers \cite{baldassi2015subdominant, chaudhari2018deep, chaudhari2019entropy, li2017stochastic, pittorino2021entropic} and the references therein. Let us just recall that in real-world applications the dimension $n$ of the unknown variable is extremely large, so that classical algorithms are often 
 not efficient enough because of the well-known curse of dimensionality, and the function $\phi$ may have poor smoothness properties.
 
Gradient descent in continuous version, given by
\begin{equation*}
   \dot X_{t} = -\nabla \phi(X_{t}) \,,
\end{equation*}
is known to converge to global minima for smooth convex functions $\phi$, otherwise to merely local minima or even saddles. The starting point of most modern algorithms is \textit{stochastic  gradient descent} (SGD)
\begin{equation*}
    \text{d}X_{t} = -\nabla \phi(X_{t})\text{d}t + \sigma \text{d}W_{t}
\end{equation*} 
where $W_{\cdot}$ is a Wiener process and the stochasticity arises from computing $\nabla \phi$ only on a lower dimensional set of randomly sampled directions, called \textit{mini-batch}. 

The papers  \cite{baldassi2015subdominant, chaudhari2019entropy} proposed to modify the loss function $\phi$ by an associated  local entropy $\phi_{\gamma}$, see \eqref{local entropy} below. This was shown to be effective in training high dimensional deep neural networks, see \cite{chaudhari2019entropy, chaudhari2018deep, pittorino2021entropic, pavon2022local}. The gradient of $\phi_{\gamma}$ is an average in space with respect to a Gibbs measure, see \eqref{gradient local entropy}, \eqref{Gibbs}, and such measure can be approximated by the long time average of an auxiliary stochastic differential equation \eqref{fast process}. Chaudhari, Oberman, Osher, Soatto, and Carlier \cite{chaudhari2018deep} show that the convergence of the resulting algorithm can be  better understood by means of the homogenisation theory for two-scale stochastic differential equation. A first goal of this paper is to put this singular perturbation result on a more general and solid mathematical ground.
We recall that, according to \cite{chaudhari2018deep}, ``Deep neural networks achieved remarkable success in a number of domains(...). A rigorous understanding of the roots of this success, however,  remains elusive".

A second goal of the paper is to add a control variable to the entropic-SGD system and study the singular perturbation in the framework of stochastic control. As a first example, we follow Li, Tai and E \cite{li2017stochastic} and consider the so-called learning rate of the algorithm as a control. However, our methods work for much more general control systems. Our main result does not require a gradient structure for the drift and allows a nonlinear dependence on a vectorial control. The rest of this Introduction gives more details on the problems, our results, and some related literature.

\subsection{{ Local entropy and deep relaxation}} 
\label{LEDR}
Following \cite{chaudhari2019entropy}
we consider $\phi_{\gamma}$, a regularization of the loss function $\phi$, such that
\begin{equation}
\label{local entropy}
    \phi_{\gamma}(x) \coloneqq -\frac{1}{\beta}\log\left(G_{\beta^{-1}\gamma}\ast\exp(-\beta \phi(x))\right)
\end{equation}
where
\begin{equation*}
    G_{\beta^{-1}\gamma}(x) \coloneqq (2\pi\gamma)^{-n/2}\exp\left(-\frac{\beta}{2\gamma}\,|x|^{2}\right)
\end{equation*}
is the heat kernel, and $\beta,\gamma>0$ are fixed parameter. The function $\phi_{\gamma}$ plays the role of a \underline{local entropy} and it is a smooth approximation of $\phi$ as $\gamma\to 0$. The parameter $\beta$ corresponds in physics to the inverse of the temperature and it will remain fixed in our analysis. Note that, by setting 
\begin{equation}
\label{eq: V in application}
    \Phi(y,x) := \phi(y) + \frac{1}{2\gamma}|x-y|^{2} ,
\end{equation}
we can write the local entropy as 
\[
    \phi_{\gamma}(x) = -\frac{1}{\beta}\log\left(\frac{1}{(2\pi\gamma)^{-n/2}} \int_{\mathds{R}^{n}} \exp\left(-\beta \Phi(y,x)\right)
    \,\text{d}y \right) .
\]
Then it is well-defined if $\Phi$ is a {confining potential} for all $x$, i.e., $\exp\left(-\beta \Phi(\cdot,x)\right)\in L^1(\mathds{R}^n)$ and $\Phi(y,x)\to +\infty$ as $|y|\to \infty$.
And this is true if $\phi$ has at most quadratic growth and $\gamma$ is chosen sufficiently small.

An additional  feature of this regularization procedure is that it ``provides a way of picking large, approximately flat,
regions of the landscape over sharp, narrow valleys, in spite of the latter
possibly having a lower loss'' \cite{chaudhari2019entropy}. This is important because in practice one is mostly interested in \underline{robust minima} lying in wide valleys. 
A further analysis and numerical validations of entropic gradient descent can be found in the recent paper
\cite{pittorino2021entropic}.

A direct computation shows that the gradient of the local entropy has the following nice structure \cite[Lemma 1]{chaudhari2018deep}
\begin{equation}
\label{gradient local entropy}
    \nabla \phi_{\gamma}(x)=\int_{\mathds{R}^{n}}\frac{x-y}{\gamma}\rho^{\infty}(\text{d}y;x)
\end{equation}
where, for a suitable  normalizing constant $Z(x)$,
\begin{equation}
\label{Gibbs}
\rho^{\infty}(y;x) \coloneqq 
\frac{1}{Z(x)} \exp\left(-\beta\left( \phi(y) + \frac{1}{2\gamma}|x-y|^{2}\right)\right) .
\end{equation}
Next we recognize that  $\rho^{\infty}$ is the density of the Gibbs invariant measure of the process
\begin{equation}
\label{fast process}
\text{d}Y_{s}  = -\nabla_{y} \Phi(Y_{s},x)\,\text{d}s + \sqrt{\frac{2}{\beta}}\,\text{d}W_{s},
\end{equation}
where $W_s$ is a $n$-dimensional Brownian motion and $x$ is frozen, provided  $\Phi$ is smooth enough. We will assume that $\phi\in C^1$ with a Lipschitz  gradient, then this fact 
can be found in \cite{bogachev2010invariant}, 
%\cite{bogachev2022fokker} 
%and \cite{lorenzi2006analytical}
whereas for $\phi\in C^2$ one can check directly that $\mathcal L^* \rho^\infty =0$, where $\mathcal L$ is the generator of the process \eqref{fast process}.

Now the fact that $\nabla \phi_{\gamma}$ in \eqref{gradient local entropy} is the average of $y\mapsto \frac{1}{\gamma}(x-y)$ over a Gibbs measure is reminiscent of what often happens  in homogenization and in singular perturbations. 
Indeed, the authors of  \cite{chaudhari2018deep} introduce the following system of \textit{singularly perturbed SDEs} 
\begin{equation}
\label{homo_sys}
    \begin{aligned}
    \text{d}X_{s} & = -\nabla_{x}\Phi(Y_{s},X_{s})\,\text{d}s ,\quad X_{0}=x\in\mathds{R}^{n}\\ 
    \text{d}Y_{s} & = -\frac{1}{\varepsilon}\nabla_{y} \Phi(Y_{s},X_{s})\,\text{d}s + \sqrt{\frac{2}{\varepsilon\beta}}
    \,\text{d}W_{s}, \quad Y_{0}=y\in\mathds{R}^{n}.
    \end{aligned}
\end{equation}
They claim that such system converges, as $\varepsilon\to 0$, to 
\begin{equation*}
    \text{d} \hat{X}_{s} = \int_{\mathds{R}^{n}} -\frac{1}{\gamma}(X_{s}-y)\rho^{\infty} (\text{d}y;X_{s})\,\text{d}s,\quad \hat{X}_{0}=x\in\mathds{R}^{n}
\end{equation*}
which also writes, by \eqref{gradient local entropy}, 
\begin{equation}
\label{limit_homo-sys}
    \text{d} \hat{X}_{s} = -\nabla \phi_{\gamma}(\hat{X}_{s})\text{d}s,\quad \hat{X}_{0}=x\in\mathds{R}^{n} ,
\end{equation}
that is the gradient descent of the regularized loss function.

The result we get on this simple model, as a consequence of our more general analysis for controlled processes, is the following.

\begin{cor}
\label{cor:nocontrol}
Let $\phi\in C^{1}(\mathds{R}^{n})$ with $\nabla\phi$  Lipschitz continuous. Then, for all $T>0$, for  $\gamma$ in \eqref{eq: V in application} small enough, \\
(i)\, for any $y\in \mathds{R}^{n}$ the $x$-component  of the trajectory $(X^{\varepsilon}, Y^{\varepsilon})$ of \eqref{homo_sys} converges to the solution of \eqref{limit_homo-sys} in the sense
	\begin{equation*}
		\lim\limits_{\varepsilon\to 0} \left(\int_{0}^{T}\mathds{E}\left[ |X^{\varepsilon}_{s} - \hat{X}_{s}|^{2} \right] \text{d}s +\mathds{E}\left[|X^{\varepsilon}_{T} - \hat{X}_{T}|^{2}\right]\right)= 0,
	\end{equation*}\\
(ii)\, if for a sequence of processes $(X^{\varepsilon_{n}}, Y^{\varepsilon_{n}})$ solving \eqref{homo_sys} , with $\varepsilon_{n}\to 0$, there is a deterministic process $\bar{x}_{\cdot}$ such that
	\begin{equation*}
		\lim\limits_{\varepsilon_{n}\to 0} \int_{0} ^{T}\mathds{E}\left[ |X^{\varepsilon_{n}}_{s} - \bar{x}_{s}|^{p} \right]\text{d}s = 0, 
	\end{equation*}
	for some $p\in [1,2]$, then $\bar{x}_{\cdot}$ satisfies \eqref{limit_homo-sys}.
\end{cor}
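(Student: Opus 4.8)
The plan is to read \eqref{homo_sys} as the uncontrolled instance of the general two-scale controlled system studied in the body of the paper, obtained by collapsing the control set to a single point, and then to invoke our general convergence theorems. In the notation of that analysis the slow drift is $f(x,y):=-\nabla_x\Phi(y,x)=-\tfrac1\gamma(x-y)$, the fast drift is $-\nabla_y\Phi(y,x)$, the slow diffusion vanishes, and the fast diffusion is the constant matrix $\sqrt{2/\beta}\,I$. Everything then reduces to checking that the standing hypotheses hold in this special case and reading off the effective system.

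First I would verify the assumptions required by the general results. Since $\phi\in C^{1}(\mathds{R}^{n})$ with $\nabla\phi$ Lipschitz, both $\nabla_x\Phi$ and $\nabla_y\Phi$ are Lipschitz in $(x,y)$ with at most linear growth, which gives strong well-posedness of \eqref{homo_sys} for each $\varepsilon>0$. For $\gamma$ small enough (as already required in \eqref{eq: V in application} for $\phi_\gamma$ to be well defined) the potential $y\mapsto\Phi(y,x)$ is uniformly coercive and confining; this is what makes the frozen fast process \eqref{fast process} ergodic, with the unique invariant Gibbs density $\rho^{\infty}(\cdot;x)$ of \eqref{Gibbs} solving $\mathcal{L}^{*}\rho^{\infty}=0$, and what yields the exponential mixing and the solvability of the associated Poisson problem on which the averaging estimates rest.

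Next I would identify the effective drift as the average of $f$ against the invariant measure,
\begin{equation*}
\bar f(x)=\int_{\mathds{R}^{n}}-\frac1\gamma(x-y)\,\rho^{\infty}(\text{d}y;x)=-\nabla\phi_{\gamma}(x),
\end{equation*}
the last equality being exactly \eqref{gradient local entropy}. As the control set is a singleton there is no genuine extension of controls in the limit, so the effective control system degenerates to the single ODE $\dot{\hat X}=\bar f(\hat X)$, that is \eqref{limit_homo-sys}. Moreover, because $\nabla\phi$ is Lipschitz the map $\phi_{\gamma}$ is $C^{2}$ with $\nabla\phi_{\gamma}$ Lipschitz, so \eqref{limit_homo-sys} has a unique solution $\hat X$.

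Part (i) then follows by specializing the quantitative trajectory-convergence statement of our main theorem to this setting, its $L^{2}$-in-time-plus-terminal estimate being precisely the claimed limit. Part (ii) follows from the limit-identification part of the same theorem together with the uniqueness just noted: any deterministic $\bar x$ obtained as an $L^{p}$ limit of a subsequence $X^{\varepsilon_{n}}$ must solve the effective equation \eqref{limit_homo-sys} and hence coincide with $\hat X$. I expect the real work to lie in the hypothesis-checking step rather than in the final invocation: one must make sure that the degenerate slow equation (no Brownian term in $X$), combined with the merely quadratic growth allowed for $\phi$, still falls within the scope of the general theorems, and that smallness of $\gamma$ is exactly the structural condition securing the uniform coercivity of $\Phi$ needed for the ergodic averaging.
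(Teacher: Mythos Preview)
Your proposal is correct and follows essentially the same route as the paper: view \eqref{homo_sys} as the two-scale system with a singleton control set, verify the standing hypotheses (Lipschitz data, non-degenerate fast diffusion, strong monotonicity of the fast drift for $\gamma$ small), identify the effective drift as $-\nabla\phi_\gamma$ via \eqref{gradient local entropy}, and then invoke the general trajectory-convergence theorems. The paper packages the hypothesis check into Corollary~\ref{cor-conv-value} and then appeals to Theorems~\ref{conv-thm-1} and~\ref{conv-thm 2}; your sketch does the same thing inline. One small remark: for part~(ii) the general result (Theorem~\ref{conv-thm 2}) yields that $\bar x$ solves the convexified inclusion $\dot{\bar x}\in\overline{\mathrm{co}}\,\overline F(\bar x)$, and it is the fact that $\overline F(x)$ is a singleton here (not uniqueness of the ODE) that collapses this to \eqref{limit_homo-sys}; the corollary only asserts that $\bar x$ satisfies \eqref{limit_homo-sys}, not that it equals $\hat X$.
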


The proof is postponed to Section  \ref{sec:DR}.

Note that the entropic gradient descent \eqref{limit_homo-sys} does not involve the gradient of the loss function $\phi$, which is usually hard to compute, because $\nabla \phi_\gamma$ depends  on $\phi$ only via the Gibbs  measure $\rho^\infty$. In practical algorithms such measure is  computed via long time averages  of the  process \eqref{fast process}, and in the discrete-time scheme $\nabla \phi$ is approximated by suitably chosen partial gradients, called \textit{mini-batches}.

 \subsection{Deep relaxation with control of the learning rate}
 \label{sec: app}

Following the model in \cite[\S 4]{li2017stochastic}, we introduce in \eqref{homo_sys} a control parameter $u_s$ playing the role of a Learning Rate. Choosing this rate optimally  allows to control in a dynamic way to what extent the process $X^\varepsilon_{\cdot}$ (and its limit $\hat{X}_{\cdot}$) should follow the gradient descent, in other words, how trustful the gradient descent direction is. Usually the control $u$ takes values in $[0,1]$. In the sequel we consider $U\subseteq\mathds{R}$ as a compact set of values that the control $u$ can take, and we write the system of \textit{singularly perturbed controlled SDEs}
\begin{equation}
\label{homo_sys_con}
    \begin{aligned}
    \text{d}X^\varepsilon_{s} & = -u_{s}\,\nabla_{x}\Phi(Y^\varepsilon_{s},X^\varepsilon_{s})\,\text{d}s + \sqrt{2}\sigma(X^\varepsilon_{s},Y^\varepsilon_{s},u_{s})\,\text{d}W_{s}, 
    \\
    \text{d}Y^\varepsilon_{s} & = -\frac{1}{\varepsilon}\nabla_{y} \Phi(Y^\varepsilon_{s},X^\varepsilon_{s})\,\text{d}s + \sqrt{\frac{2}{\varepsilon\beta}} 
    \,\text{d}W_{s}, 
    \end{aligned}
\end{equation}
where $\Phi$ is defined in \eqref{eq: V in application}, and $\sigma$ is a diffusion term that we add for the sake of generality and is allowed to be zero. The optimal learning rate should provide a balance between \textit{exploitation} (how fast at each step should we follow the drift) and  \textit{exploration} (how much at each step should we diffuse and look around). Given an appropriate payoff function for the problem of tuning the learning rate, we can write an optimal control problem of the form 
\begin{equation}
\label{J}
    \begin{aligned}
        \min\limits_{u} &\; \mathds{E}\left[g(X^\varepsilon_{T})e^{\lambda(t-T)} + \int_{t}^{T}\ell(s,X^\varepsilon_s,Y^\varepsilon_s,u_s)e^{\lambda(t-s)}\,\text{d}s\; \bigg|\; X^\varepsilon_{t}=x,\;Y^\varepsilon_{t}=y\right]
    \end{aligned}
\end{equation}
subject to \eqref{homo_sys_con}, where $\lambda$ is a non negative constant, and $g,\ell$ satisfy some growth assumptions that we will later make precise in section \ref{sec:OCP sys}, and can be chosen according to the performance we seek (e.g., minimizing $\mathds{E}[\phi(X^\varepsilon_{T})]$ or the expected distance of $X^\varepsilon$ from a reference trajectory). 

Our first main result, Theorem \ref{thm: value function sol limit pde}, says that the problem \eqref{homo_sys_con}-\eqref{J} converges in a variational sense (i.e., value function goes to value function) as $\varepsilon\to 0$, to a limit \underline{effective} control problem with \underline{extended controls} whose dynamics is 
\begin{equation}
\label{limit SGD}
    \text{d}\hat{X}_{s} = \overline f(\hat{X}_{s}, \upnu_{s})\,\text{d}s + \sqrt{2}\overline{\sigma}(\hat{X}_{s},\upnu_{s})\,\text{d}W_{s} \,, \quad \hat X_{0}=x\in\mathds{R}^{n} \,,
\end{equation}
\begin{equation}
\label{fbar}
\begin{aligned}
    & \overline{f}(\hat{x},\upnu) :=  - \int_{\mathds{R}^{n}} \frac{\hat x-y}{\gamma}\upnu(y)\rho^{\infty}(\text{d}y; \hat x) , \quad \upnu\in U^{ex}:= L^\infty(\mathds{R}^m, U),\\
    & \overline{\sigma}(\hat{x},\upnu) :=\sqrt{ \int_{\mathds{R}^{n}}\sigma\sigma^{\top}(\hat{x},y,\upnu(y)) \rho^{\infty} (\text{d}y;\hat x)},
\end{aligned}
\end{equation}
where $\sqrt M$ denotes the matrix square root of a positive semi-definite matrix $M$, and the payoff functional has the same form as \eqref{J} with the effective running payoff
\begin{equation}
\label{lbar}
	\overline{\ell}(s, \hat{x},\upnu) := \int_{\mathds{R}^{n}} \ell(s,\hat x,y,\upnu(y))\rho^{\infty}(\text{d}y; \hat x) .
   \end{equation}
Note that if $\upnu_t(y) = u_t$ is constant in $y$,  then $\overline f(x_{t}, \upnu_t)={ -}u_{t}\nabla \phi_{\gamma}(x_{t})$, where $\phi_{\gamma}$ is the local entropy associated to $\phi$, see \eqref{local entropy}, and its gradient is given by \eqref{gradient local entropy}. 
Therefore,  by taking $\sigma\equiv 0$ and $U=\{1\}$, we recover \eqref{limit_homo-sys} as a  particular case.

Let us illustrate such result on a practical example.
Let $\mathcal{U}$ be the set  of measurable functions $[0,T]\to U$. Consider the  value function of the deterministic control problem consisting of minimizing the loss function via trajectories of the  entropic gradient descent controlled by the learning rate, i.e., 
\begin{equation}\label{limit_sp-learn}
    \begin{aligned}
       \mathcal{V}(x) := & \inf\limits_{u_{\cdot}\in \mathcal{U}}\;  \phi(X_{T}) \\
        & \quad \text{s.t. }\; \dot X_{t} = -u_{t}\nabla \phi_{\gamma}(X_{t}) , \quad t\in [0,T], \quad  X_{0} = x \in \mathds{R}^{n}. 
    \end{aligned}
\end{equation}
Its \textit{deep relaxation} is the \textit{singularly perturbed} optimal control problem
\begin{equation}
\label{sp-learn}
    \begin{aligned}
        \mathcal{V}^{\varepsilon}(x,y) := & \inf\limits_{u_{\cdot}\in \mathcal{U}}\; \mathds{E}[\phi(X^{\varepsilon}_{T})]\\
        & \quad \text{s.t. }\; 
          \text{d}X^{\varepsilon}_{t} = - u_{t} \frac{X_{t}^{\varepsilon} - Y^{\varepsilon}_{t}}{\gamma} \,\text{d}t,\\        & \quad \quad\quad \,
        \text{d}Y^{\varepsilon}_{t} = -\frac{1}{\varepsilon}\left(\nabla \phi(Y^{\varepsilon}_{t}) - \frac{X_{t}^{\varepsilon} - Y^{\varepsilon}_{t}}{\gamma}\right)\,\text{d}t + \sqrt{\frac{2}{\varepsilon\beta}} \,\text{d}W_{t} , \\ 
        & \quad \quad \quad X^{\varepsilon}_{0} = x \in \mathds{R}^{n}, \; Y^{\varepsilon}_{0} = y \in \mathds{R}^{n},\quad t\in [0,T] ,
    \end{aligned}
\end{equation}
where $y$ is arbitrary.
\begin{thm}
\label{thm: practice}
Let $\phi\in C^{1}(\mathds{R}^{n})$ with  $\nabla\phi$ Lipschitz continuous.  Then for all $\gamma$ in \eqref{eq: V in application}  small enough
\begin{equation*}
    \lim\limits_{\varepsilon\to 0} \mathcal{V}^{\varepsilon}(x,y) \leq \mathcal{V}(x)
\end{equation*}
locally uniformly in $x,y\in \mathds{R}^n$, i.e., the perturbed dynamics yields a  value not larger than the one with a controlled full gradient descent.
\end{thm}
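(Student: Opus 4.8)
The plan is to recognise the singularly perturbed problem \eqref{sp-learn} as a particular instance of the general two-scale control problem \eqref{homo_sys_con}--\eqref{J} and then to invoke Theorem \ref{thm: value function sol limit pde}. Since $\nabla_x\Phi(y,x)=(x-y)/\gamma$ and $\nabla_y\Phi(y,x)=\nabla\phi(y)-(x-y)/\gamma$ for $\Phi$ as in \eqref{eq: V in application}, the dynamics in \eqref{sp-learn} is exactly \eqref{homo_sys_con} with vanishing diffusion $\sigma\equiv 0$, while the payoff \eqref{J} reduces to $\mathds{E}[\phi(X^\varepsilon_T)]$ upon choosing the starting time $t=0$, $\lambda=0$, $\ell\equiv 0$ and $g=\phi$. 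First I have to check that these data satisfy the standing hypotheses of Section \ref{sec:OCP sys}: because $\nabla\phi$ is Lipschitz, $\phi$ has at most quadratic growth, which is the admissible growth for the terminal cost, and the same Lipschitz property supplies the required regularity of the drift; moreover, for $\gamma$ small enough $\Phi(\cdot,x)$ is a confining potential for every $x$, so the Gibbs measure $\rho^{\infty}(\cdot;x)$ of \eqref{Gibbs} is well defined and the homogenisation result applies.

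Granting the hypotheses, Theorem \ref{thm: value function sol limit pde} yields that $\mathcal{V}^\varepsilon(x,y)$ converges, locally uniformly in $(x,y)$, to the value function $\overline{\mathcal{V}}(x)$ of the effective control problem with extended controls $\upnu\in U^{ex}$ and dynamics \eqref{limit SGD}. Here $\sigma\equiv 0$ forces $\overline{\sigma}\equiv 0$ in \eqref{fbar}, so the effective system degenerates to the deterministic ODE $\dot{\hat X}_s=\overline f(\hat X_s,\upnu_s)$ with $\hat X_0=x$, and its payoff is $\phi(\hat X_T)$. In particular $\lim_{\varepsilon\to 0}\mathcal{V}^\varepsilon(x,y)$ exists and equals $\overline{\mathcal{V}}(x)$, and it only remains to compare $\overline{\mathcal{V}}$ with $\mathcal{V}$.

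The key observation is that the admissible controls of $\mathcal{V}$ embed into those of $\overline{\mathcal{V}}$. Any $u_\cdot\in\mathcal{U}$ may be read as the extended control $\upnu_s(y):=u_s$, constant in $y$; then \eqref{fbar} together with \eqref{gradient local entropy} gives $\overline f(\hat x,\upnu_s)=-u_s\,\nabla\phi_{\gamma}(\hat x)$, exactly as noted after \eqref{lbar}. Under this identification the effective trajectory solves $\dot{\hat X}_s=-u_s\nabla\phi_{\gamma}(\hat X_s)$, which is precisely the controlled gradient descent in \eqref{limit_sp-learn}, and the terminal cost $\phi(\hat X_T)$ matches the one defining $\mathcal{V}$. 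Minimising over the larger family of $y$-dependent extended controls can only lower the value, whence $\overline{\mathcal{V}}(x)\le\mathcal{V}(x)$ for every $x$; combined with the previous paragraph this gives $\lim_{\varepsilon\to 0}\mathcal{V}^\varepsilon(x,y)=\overline{\mathcal{V}}(x)\le\mathcal{V}(x)$, locally uniformly.

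I expect the only genuine work to be the verification of the standing hypotheses in the first paragraph, everything else being a direct reading of Theorem \ref{thm: value function sol limit pde} together with the elementary fact that constant-in-$y$ extended controls reproduce the full gradient descent $-u\nabla\phi_{\gamma}$. The conceptual content, and the reason one obtains an inequality rather than an equality, is precisely that the deep-relaxed limit optimises over $y$-dependent controls, a genuinely richer family than the time-only controls allowed in $\mathcal{V}$.
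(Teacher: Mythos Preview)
Your proposal is correct and follows essentially the same route as the paper: the paper packages the verification of the standing hypotheses into Corollary~\ref{cor-conv-value} (which checks (A'), (C), (D), (B') for the data of \eqref{sp-learn} and identifies $\rho^\infty=\mu_x$), then applies Theorem~\ref{thm: value function sol limit pde} to obtain $\lim_{\varepsilon\to 0}\mathcal{V}^\varepsilon(x,y)=\overline{\mathcal{V}}(x)$, and finally observes $\mathcal{U}\subset\mathcal{U}^{ex}$ to conclude $\overline{\mathcal{V}}(x)\le\mathcal{V}(x)$. Your sketch reproduces each of these steps; the only place you are slightly informal is the hypothesis check, where the paper explicitly verifies the strong monotonicity (A4) via $(b(x,y)-b(x,z))\cdot(y-z)\le (L-1/\gamma)|y-z|^2$, which is where the smallness condition $\gamma<1/L$ enters.
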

The proof is based on the fact that $ \lim_{\varepsilon\to 0} \mathcal{V}^{\varepsilon}$ is the following value function:
\begin{equation}\label{effective_sp-learn}
    \begin{aligned}
        \overline{\mathcal{V}}(x) := & \inf
      \limits_{\upnu_{\cdot}\in \mathcal{U}^{ex}}\; 
       \phi(\hat X_{T})    \quad \text{s.t. }\; \frac{d}{ds}\hat{X}_{s} = 
    \overline f(\hat{X}_{s}, \upnu_{s}) , \quad t\in [0,T] 
         \quad  X_{0} = x \in \mathds{R}^{n} ,
    \end{aligned}
\end{equation}
where $\mathcal{U}^{ex}$ is the set of measurable functions $[0,T]\to U^{ex}$  and $\overline f$ is  the effective dynamics \eqref{fbar}. The details are postponed to Section \ref{sec:DR}.

The other main results of the paper, Theorems \ref{conv-thm-1} and \ref{conv-thm 2}, imply  that for $\sigma \equiv 0$ also the trajectories of \eqref{homo_sys_con} converge  to those of \eqref {limit SGD}  in a sense similar to the preceding Corollary \ref{cor:nocontrol}. A consequence of these properties is the possibility of approximating optimal trajectories of the effective problem \eqref{effective_sp-learn} by sub-optimal trajectories for the perturbed problem \eqref{sp-learn}. By this we mean pairs $(X^{\varepsilon_{n}}, Y^{\varepsilon_{n}})$ such that
\begin{equation}\label{subop}
  \mathds{E}[\phi(X^{\varepsilon_n}_{T})]   \leq    \mathcal{V}^{\varepsilon_n}(x,y) + o(1) , \quad \text{ as } \varepsilon_n\to 0 .
\end{equation}

\begin{cor}
\label{cor:trajectories (old)} 
Under the assumptions of Theorem \ref{thm: practice} with $U$  convex, let $(X^{\varepsilon_{n}}\!,\!Y^{\varepsilon_{n}}\!)$  be  trajectories of \eqref{sp-learn} satisfying \eqref{subop}, 
	\begin{equation*}
		\lim\limits_{\varepsilon_{n}\to 0} \int_{0} ^{T}\mathds{E}\left[ |X^{\varepsilon_{n}}_{s} - \bar{x}_{s}|^{p} \right] \text{d}s= 0, 
	\end{equation*}
	for a deterministic process $\bar{x}_{\cdot}$ 
 and some $p\in [1,2]$, and 
 	\begin{equation}\label{ineq: cor}
			\limsup \limits_{\varepsilon_{n}\to 0} \mathds{E}\left[ \phi(X^{\varepsilon_{n}}_{T}) \right] \geq  \phi (\bar{x}_{T}) .
	\end{equation}
	Then $\bar{x}_{\cdot}$  is optimal for the problem \eqref{effective_sp-learn}, i.e., for some $\upnu_.\in \mathcal{U}^{ex}$ it is a trajectory of the effective system in \eqref{effective_sp-learn} and $\phi (\bar{x}_{T}) = \overline{\mathcal{V}}(x)$. 
\end{cor}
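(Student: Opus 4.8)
The plan is to obtain the two inequalities $\phi(\bar{x}_T)\le\overline{\mathcal{V}}(x)$ and $\phi(\bar{x}_T)\ge\overline{\mathcal{V}}(x)$ separately, and then combine them. Once we know that $\bar{x}_\cdot$ is an admissible trajectory of the effective system in \eqref{effective_sp-learn} attaining the value $\overline{\mathcal{V}}(x)$, optimality follows by definition of the infimum. The whole argument rests on the identification $\lim_{\varepsilon_n\to 0}\mathcal{V}^{\varepsilon_n}(x,y)=\overline{\mathcal{V}}(x)$, which is precisely the fact established in the proof of Theorem \ref{thm: practice}, together with the trajectory convergence Theorems \ref{conv-thm-1} and \ref{conv-thm 2}.

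First I would prove the upper bound. Taking the $\limsup$ as $\varepsilon_n\to 0$ in the sub-optimality hypothesis \eqref{subop} and using the value-function convergence recalled above gives
\[
\limsup_{\varepsilon_n\to 0}\mathds{E}\left[\phi(X^{\varepsilon_n}_{T})\right]\le\lim_{\varepsilon_n\to 0}\mathcal{V}^{\varepsilon_n}(x,y)=\overline{\mathcal{V}}(x).
\]
Combining this with the lower-semicontinuity hypothesis \eqref{ineq: cor}, namely $\phi(\bar{x}_T)\le\limsup_{\varepsilon_n\to 0}\mathds{E}[\phi(X^{\varepsilon_n}_{T})]$, immediately yields $\phi(\bar{x}_T)\le\overline{\mathcal{V}}(x)$.

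For the reverse inequality I would invoke the trajectory convergence results. Since by hypothesis the $X$-components $X^{\varepsilon_n}_\cdot$ converge to the deterministic process $\bar{x}_\cdot$ in the $L^p$ sense, Theorems \ref{conv-thm-1} and \ref{conv-thm 2} — which is where the convexity of $U$ is used — guarantee that $\bar{x}_\cdot$ is a trajectory of the effective control system in \eqref{effective_sp-learn} for some extended control $\upnu_\cdot\in\mathcal{U}^{ex}$, with effective drift $\overline{f}$ as in \eqref{fbar}. As $\overline{\mathcal{V}}(x)$ is the infimum of $\phi(\hat{X}_T)$ over all such admissible effective trajectories, this gives $\phi(\bar{x}_T)\ge\overline{\mathcal{V}}(x)$. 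The two inequalities together force $\phi(\bar{x}_T)=\overline{\mathcal{V}}(x)$, establishing both conclusions of the statement.

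The main obstacle is entirely contained in the reverse inequality, i.e.\ in realizing the limit $\bar{x}_\cdot$ as an honest trajectory of the effective system. The difficulty is that the near-optimal controls $u^{\varepsilon_n}_\cdot$ need not converge in any strong sense, so one cannot simply pass to the limit in the controlled drift of \eqref{sp-learn}. The natural remedy is to pass through relaxed controls (Young measures), extract a limiting relaxed control, and then use the \emph{convexity} of $U$ — hence of the set of attainable effective velocities $\overline{f}(\bar{x}_s,\cdot)$ — to collapse the relaxed limit back to a genuine extended control $\upnu_s\in U^{ex}$ producing exactly the drift $\overline{f}(\bar{x}_s,\upnu_s)$. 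This is the technical heart of Theorems \ref{conv-thm-1} and \ref{conv-thm 2}, so once those are available the corollary is a clean combination of trajectory convergence with the value-function convergence of Theorem \ref{thm: practice}.
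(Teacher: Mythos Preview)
Your argument is essentially the paper's own, but two points of precision deserve attention.

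First, only Theorem \ref{conv-thm 2} is relevant for the reverse inequality; Theorem \ref{conv-thm-1} goes in the opposite direction (approximating effective trajectories by perturbed ones) and plays no role here.

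Second, and more importantly, Theorem \ref{conv-thm 2} does \emph{not} itself use convexity of $U$, and its conclusion is only that $\dot{\bar{x}}_s\in\overline{\text{co}}\,\overline{F}(\bar{x}_s)$ for a.e.\ $s$, i.e.\ $\bar{x}_\cdot$ solves the \emph{convexified} effective inclusion. The additional step, carried out explicitly in the paper's proof of the corollary, is to observe that in this model
\[
\overline{f}(x,\upnu)=-\int_{\mathds{R}^n}\upnu(y)\,\frac{x-y}{\gamma}\,\text{d}\mu_x(y)
\]
is \emph{linear} in $\upnu$, and since $U$ convex implies $U^{ex}$ convex, one has $\overline{\text{co}}\,\overline{f}(x,U^{ex})=\overline{f}(x,U^{ex})$. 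Only then does $\bar{x}_\cdot$ become a genuine trajectory of the effective system and the inequality $\phi(\bar{x}_T)\ge\overline{\mathcal{V}}(x)$ follow. Your final paragraph hints at this with ``convexity of the set of attainable effective velocities'', but you should make explicit that the passage from the convexified inclusion to an honest extended control is a separate, model-specific step rather than something delivered by the cited theorems.
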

The proof can be found in Section \ref{sec:DR}.

\subsection{General singular perturbations in stochastic control}
\label{sec: intro}
We are going to embed the deep relaxation problems described so far in a more general setting of singularly perturbed stochastic control systems, namely, 
\begin{equation}
\label{dynamics}
\begin{aligned}
    \text{d}X_{s} &= f(X_{s},Y_{s},u_{s})\,\text{d}s + \sqrt{2}\,\sigma^{\varepsilon}(X_{s},Y_{s},u_{s})\,\text{d}W_{s},\\
    \text{d}Y_{s} & = \frac{1}{\varepsilon}\,b(X_{s},Y_{s})\,\text{d}s + \sqrt{\frac{2}{\varepsilon}\,}\varrho(X_{s},Y_{s})\,\text{d}W_{s},
\end{aligned}
\end{equation}
where $X_{s}\in \mathds{R}^n$ is the \textit{slow} dynamics, $Y_{s}\in \mathds{R}^m$ is the \textit{fast} dynamics, $u_{s}$ is the control taking values in a given compact set $U$, and $W_{s}$ is a multidimensional Brownian motion. 
We will make suitable regularity and growth assumptions on the data of such system, but we will not require the gradient structure of the drifts in \eqref{homo_sys_con}. 
The diffusion coefficient of the process $X_.$ can be degenerate (i.e. $\sigma^{\varepsilon}=0$ is allowed), whereas the one of $Y_.$ is required to be non-degenerate. The precise assumptions are given in Section \ref{sec: setting}.
We will consider optimization problems with cost/payoff function of the general form 
\begin{equation}
\label{cost function}
  \begin{aligned}
    &J(t,x,y,u):=\\
    &\mathds{E}\left[ e^{\lambda(t-T)}g(X_{T},Y_{T}) + \int_{t}^{T}\ell(s,X_{s},Y_{s},u_{s})e^{\lambda(t-s)}\text{d}s \; \bigg|\; X_{t}=x,\;Y_{t}=y \right].
    \end{aligned}
\end{equation} 
The value function $V^{\varepsilon}(t,x,y)$ of such problem is known to solve in the viscosity sense a fully nonlinear, degenerate parabolic PDE of Hamilton-Jacobi-Bellman type in $(0,T)\times\mathds{R}^n\times \mathds{R}^m$. From our results in the companion paper \cite{bardi2023singular} we have the convergence of $V^{\varepsilon}$ to the solution $V(t,x)$ of a suitable HJB PDE in $(0,T)\times\mathds{R}^n$, as $\varepsilon\to 0$. Here we first show that $V$ is the value function of a limit effective optimal control problem in reduced dimension $n$,  but with a larger set of {\it extended controls} $U^{ex}$. This limit problem is based on averaging with respect to a probability measure that is fully characterized as invariant measure of an ergodic process, although in general it is not as explicit as the Gibbs measure in \eqref{limit SGD}, and \eqref{lbar}. Then we make suitable choices of  $\ell$ and $g$ in the functional $J$ \eqref{cost function} to deduce two convergence theorems for trajectories of \eqref{dynamics} to trajectories of the effective system. These results are new also for problems with data bounded in the fast variables $Y$, for which the convergence in HJB equations was already proved in \cite{bardi2011optimal,bardi2010convergence}.

The generality of the system \eqref{dynamics} is motivated by applications different from the Deep Learning problems studied here. The results of the present paper can be used for models of pricing and trading derivative securities in financial markets with stochastic volatility, as  in \cite{bardi2010convergence} {and \cite{fouque2011multiscale}}, or for applications in economics and advertising theory as  in \cite{bardi2011optimal}.

There is a wide literature on singular perturbations for control systems that goes back to the late 60's \cite{kokotovic1999singular, kushner2012weak}, and also 
for diffusion processes, with and without control, and many different models with fast variables have been studied since then, both in deterministic and stochastic settings and using methods of probability, analysis, measure theory, or control. We refer the reader to {the introductions of \cite{bardi2011optimal,bardi2010convergence} and the large but non-exhaustive list of references therein.  Let us mention some additional papers on singular perturbation problems for stochastic differential equations: 
for the case without control the papers \cite{pardoux2001poisson,pardoux2003poisson,pardoux2005poisson} by Pardoux and Veretennikov; for problems with control the work of Borkar and Gaitsgory \cite{borkar2007averaging, borkar2007singular} where the control acts on both the slow and  the fast variables, and is analyzed by means of Limit Occupational Measures. 

The paper is organized as follows. In Section \ref{sec: setting} we list the basic assumptions on \eqref{dynamics} and \eqref{cost function}, then recall our results from \cite{bardi2023singular} on the HJB equations and the convergence of their solutions that are crucial for our subsequent analysis.  In Section \ref{sec: conv traj} we study a new optimal control problem, the effective problem, and show that the limit $V$ of the value functions $V^\varepsilon$ is indeed the value function of such problem.
In Section \ref{sec: conv traj 2}
we study the convergence of the trajectories, and finally apply all this to the deep relaxation problems described in Sections \ref{LEDR} and \ref{sec: app}.

\section{The two scale stochastic control problem}
\label{sec: setting}

\subsection{The system}
\label{sec:stochastic system}
Let $(\Omega,\mathcal{F},\mathcal{F}_{t},\mathds{P})$ be a complete filtered probability space and let $(W_{t})_{t}$ be an $\mathcal{F}_{t}$-adapted standard $r$-dimensional Brownian motion. We consider the stochastic control system \eqref{dynamics}, where $u_t$ is in the set of admissible control functions  $ \mathcal{U}$,  i.e., the set of $\mathcal{F}_{t}$-progressively measurable processes taking values in $U$.
 Note that it is much more general than the systems \eqref{homo_sys} and \eqref{homo_sys_con}
 arising in deep relaxation, in particular the vector fields do not have a gradient structure. However,  we will not present the results in the full generality of our paper \cite{bardi2023singular}, instead we strengthen a bit some conditions in order to simplify the statements of this section. In Section \ref{sec: conv traj} we will strengthen further the assumptions to get detailed results on the effective control problem.  We denote by $\mathds{M}^{n,m}$ (respec. $\mathds{S}^{n}$) the set of matrices of $n$ rows and $m$ columns (respec. the subset of $n$-dimensional squared symmetric matrices).

\noindent{\textit{Assumptions \textbf{(A)}:}}

\begin{enumerate}[label=\textbf{(A\arabic*)}]
	\item For a given compact set $U$, $f:\mathds{R}^{n}\times\mathds{R}^{m}\times U \rightarrow \mathds{R}^{n}$, 
$\sigma^{\varepsilon}: \mathds{R}^{n}\times\mathds{R}^{m}\times U \rightarrow \mathds{M}^{n,r}$, and
$b:\mathds{R}^{n}\times\mathds{R}^{m}\rightarrow \mathds{R}^{m}$  are continuous functions, Lipschitz continuous in $(x,y)$ uniformly with respect to $u\in U$ and $\varepsilon>0$. 
	\item The diffusion $\sigma^{\varepsilon}$ driving the slow variables $X_{t}$ satisfies
\begin{equation*}
    \lim\limits_{\varepsilon\rightarrow0}\sigma^{\varepsilon}(x,y,u) = \sigma(x,y,u)\quad\text{locally uniformly},
\end{equation*}
where $\sigma:\mathds{R}^{n}\times\mathds{R}^{m}\times U \rightarrow \mathds{M}^{n,r}$ satisfies the same conditions as $\sigma^{\varepsilon}$. 
	\item The diffusion $\varrho$  is constant such that $\varrho\varrho^{\top}=\Bar{\varrho\,}\mathds{I}_{m}$ where $\Bar{\varrho}>0$ is a constant and $\mathds{I}_{m}$ is the identity matrix.
	\item The drift $b$ satisfies the \textit{strong monotonicity condition}
    \begin{equation}
        \exists\,\kappa>0 \text{ s.t. } \,(b(x,y_{1})-b(x,y_{2}))\cdot(y_{1}-y_{2}) \leq -\kappa \, |y_{1}-y_{2}|^{2},\quad \forall\, x,y_{1},y_{2}.
    \end{equation}
\end{enumerate}
Note that assumption \textbf{(A1)} implies that $f, \sigma^{\varepsilon},$ and $b$ have linear growth in both $x$ and $y$, that is, for some positive constant $C$,
\begin{equation}
\label{assumption-slow}
    |f(x,y,u)|,\|\sigma^{\varepsilon}(x,y,u)\|\leq C(1+|x|+|y|),\quad \forall\; x,y, u,\;\forall \varepsilon>0,
\end{equation}
\begin{equation}
\label{assumption-fast}
    |b(x,y| \leq C(1+|x|+|y|),\quad \forall\; x,y .
\end{equation}
We remark that we will not make any non-degeneracy assumption on the matrices $\sigma^{\varepsilon},\sigma$, so the cases $\sigma^{\varepsilon},\sigma\equiv0$ are allowed.

\subsection{The optimal control problem and HJB equation}
\label{sec:OCP sys}
The functional $J$ with finite horizon on the time interval $[t, T]$, $t>0$, and  discount factor $\lambda\geq 0$, is defined by \eqref{cost function}. In this section, for consistency with our paper \cite{bardi2023singular}, we consider $J$ as a payoff  and the optimal control problem consists of maximizing it. The minimization problem of the previous section is easily recovered by changing signs to $\ell$ and $g$. Then the value function is 
\begin{equation}
    \label{value function}
    \tag{$\;OCP(\varepsilon)\;$}
    V^{\varepsilon}(t,x,y) := \sup\limits_{u\in\mathcal{U}}J(t,x,y,u),\quad \text{subject to }\; \eqref{dynamics}\,.
\end{equation}
The following assumption concerns the utility function $g$ and the running payoff $\ell$:

\noindent{\textit{Assumption \textbf{(B)}:}}  The discount factor is $\lambda\geq 0$, and the utility function $g:\mathds{R}^{n}\times\mathds{R}^{m}\rightarrow\mathds{R}$ and running payoff $\ell:[0,T]\times\mathds{R}^{n}\times\mathds{R}^{m}\times U\rightarrow\mathds{R}$ are continuous, Lipschitz  in $y$ uniformly in their other arguments, and satisfy
\begin{equation}
\label{assumption-cost}
    \exists\;K>0\;\text{ s.t. }\; |g(x,y)|,|\ell(s,x,y,u)|\leq K(1+|x|^{2} + |y|
    ),\;\forall s\in[0,T], x,y, u .
\end{equation}

The HJB equation associated via Dynamic Programming to the value function $V^{\varepsilon}$ is
\begin{equation}
    \label{HJB}
    -V^{\varepsilon}_{t} + F^{\varepsilon}\left(t,x,y,V^{\varepsilon},D_{x}V^{\varepsilon},\frac{D_{y}V^{\varepsilon}}{\varepsilon}, D^{2}_{xx}V^{\varepsilon},\frac{D^{2}_{yy}V^{\varepsilon}}{\varepsilon}, \frac{D^{2}_{x,y}V^{\varepsilon}}{\sqrt{\varepsilon}}\right) = 0 ,
\end{equation}
in $(0,T)\times\mathds{R}^{n}\times\mathds{R}^{m}$, complemented with the  terminal condition
\begin{equation}
\label{terminal cond}
    V^{\varepsilon}(T,x,y)=g(x,y) .
\end{equation} 
The Hamiltonian $F^{\varepsilon}:[0,T]\times\mathds{R}^{n}\times\mathds{R}^{m}\times\mathds{R}\times\mathds{R}^{n}\times\mathds{R}^{m}\times\mathds{S}^{n}\times\mathds{S}^{m}\times\mathds{M}^{n,m}\rightarrow\mathds{R} \;$ is %defined as
\begin{equation*}
    F^{\varepsilon}(t,x,y,r,p,q,M,N,Z) := H^{\varepsilon}(t,x,y,p,M,Z) - \mathcal{L}(x,y,q,N)+\lambda r,
\end{equation*}
where
\begin{equation*}
    H^{\varepsilon}(t,x,y,p,M,Z) := \min\limits_{u\in U}\left\{ -\text{trace}(\sigma^{\varepsilon}\sigma^{\varepsilon\top}M) - f\cdot p - 2\text{trace}(\sigma^{\varepsilon}\varrho^{\top}Z^{\top}) - \ell  \right\}
\end{equation*}
where $\sigma^{\varepsilon},f$  are computed at $(x,y,u)$, $\ell=\ell(t,x,y,u)$,  and 
\begin{equation*}
    \mathcal{L}(x,y,q,N) := b(x,y)\cdot q + \text{trace}(\varrho \varrho^{\top}    N).
\end{equation*}
This is a fully nonlinear degenerate parabolic equation (strictly parabolic in the $y$ variables by the assumption (A3)). 
We define also the Hamiltonian $H$ as $H^{\varepsilon}$ when $\sigma^{\varepsilon}$ is replaced by $\sigma$:
\begin{equation}
    \label{H}
    H(t,x,y,p,M,Z) := \min\limits_{u\in U}\left\{ -\text{trace}(\sigma\sigma^{\top}M) - f\cdot p - 2\text{trace}(\sigma\varrho^{\top}Z^{\top}) - \ell  \right\} .
\end{equation}
The next  result is standard, see, e.g., \cite[Proposition 3.1]{bardi2010convergence} or in \cite[Prop. 2.1]{bardi2011optimal}. 

\begin{prop}     
\label{prelimit sol}
Assume (A) and (B). 
Then for any $\varepsilon>0$, the function $V^{\varepsilon}$ in \eqref{value function} is the unique continuous viscosity solution to the Cauchy problem \eqref{HJB}-\eqref{terminal cond} with at most quadratic growth in $x$ and $y$, i.e.,
\begin{equation*}
    \exists\;K>0\;\text{ such that }\; |V^{\varepsilon}(t,x,y)|\leq K(1+|x|^{2}+|y|^{2}),\quad \forall\;t\in [0,T],\;x\in\mathds{R}^{n},\;y\in\mathds{R}^{m}.
\end{equation*}
Moreover the functions $V^{\varepsilon}$ are locally equibounded.
\end{prop}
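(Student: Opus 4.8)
The plan is to assemble the three assertions — that $V^{\varepsilon}$ solves \eqref{HJB}-\eqref{terminal cond} in the viscosity sense, that it has at most quadratic growth, and that it is the unique such solution, together with local equiboundedness of the family — from the standard machinery for controlled diffusions and viscosity solutions, following the template of \cite{fleming2006controlled, yong1999stochastic} and the cited \cite[Prop. 3.1]{bardi2010convergence}, \cite[Prop. 2.1]{bardi2011optimal}. The substantive work is in two places: the a priori estimates on the state process (which must be uniform in $\varepsilon$), and the comparison principle in the unbounded, quadratically growing setting.

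First I would derive moment estimates for the state. Fix $\varepsilon>0$ and $u\in\mathcal{U}$. Assumption (A1) makes $f,\sigma^{\varepsilon},b,\varrho$ Lipschitz in $(x,y)$ uniformly in $u$, hence of linear growth, cf. \eqref{assumption-slow}-\eqref{assumption-fast}. Applying It\^o's formula to $|X_s|^2+|Y_s|^2$, together with the Burkholder-Davis-Gundy inequality and Gronwall's lemma, yields
\[
\mathds{E}\left[\sup_{t\le s\le T}\left(|X_s|^2+|Y_s|^2\right)\;\big|\;X_t=x,Y_t=y\right]\le C\left(1+|x|^2+|y|^2\right).
\]
The crucial point for local equiboundedness is that $C$ can be taken independent of $\varepsilon$. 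The factors $1/\varepsilon$ in \eqref{dynamics} enter only through the fast drift $b$ and the diffusion $\varrho/\sqrt\varepsilon$; here the strong monotonicity (A4) is decisive, since the dissipative term $-\tfrac{2\kappa}{\varepsilon}|Y|^2$ exactly balances the $O(1/\varepsilon)$ contribution of the trace of $\varrho\varrho^{\top}$ (constant by (A3)), giving an $\varepsilon$-uniform bound on the moments of $Y$. (Alternatively one may invoke the uniform estimates already established in \cite{bardi2023singular}.)

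Inserting these estimates into \eqref{cost function} and using the growth bound \eqref{assumption-cost} on $g,\ell$ — quadratic in $x$ and linear in $y$ — gives, uniformly in $u$ and in $\varepsilon$, $|J(t,x,y,u)|\le K'(1+|x|^2+|y|^2)$; taking the supremum over $u$ in \eqref{value function} yields at once the stated quadratic growth of $V^{\varepsilon}$ and the local equiboundedness of $\{V^{\varepsilon}\}_{\varepsilon}$. That $V^{\varepsilon}$ is a viscosity solution of \eqref{HJB} with terminal data \eqref{terminal cond} is then the classical consequence of the Dynamic Programming Principle for \eqref{value function}, and its continuity follows from the Lipschitz dependence of the coefficients on the state combined with the moment estimates.

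The main obstacle is uniqueness, namely a comparison principle for \eqref{HJB}-\eqref{terminal cond} within the class of functions of at most quadratic growth. Since both the domain and the admissible solutions are unbounded and the data grow quadratically, the standard doubling-of-variables argument cannot be run with a plain quadratic penalization; instead one uses a growth-adapted test function, correcting $|x-x'|^2/(2\delta)+|y-y'|^2/(2\delta)$ by a term of the form $\theta(|x|^2+|y|^2)$ and tuning the auxiliary parameters, exactly as in the quadratic-growth comparison results of Da Lio and Ley \cite{da2006uniqueness}. The structure of the problem makes the required hypotheses checkable: the Hamiltonian is uniformly Lipschitz in $(x,y)$ through (A1)-(A2) and (B), the second-order term $\varrho\varrho^{\top}$ is constant by (A3), and the dissipativity (A4) controls the fast variables. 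Verifying these structural conditions and concluding that $V^{\varepsilon}$ is the unique quadratically growing viscosity solution is the delicate step; everything else is routine.
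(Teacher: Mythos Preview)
The paper does not give its own proof of this proposition; it simply declares the result ``standard'' and refers to \cite[Proposition~3.1]{bardi2010convergence} and \cite[Prop.~2.1]{bardi2011optimal}. Your outline follows precisely that standard template --- moment estimates on the state (with the correct observation that (A4) is what makes them uniform in~$\varepsilon$), the Dynamic Programming Principle for the viscosity property, and the Da~Lio--Ley comparison \cite{da2006uniqueness} for uniqueness in the quadratic-growth class --- so it is in full agreement with what the paper intends.
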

We remark that $V^{\varepsilon}$ is not bounded in $y$, contrary to what occurs in \cite{bardi2010convergence,bardi2011optimal}, but it has at most quadratic growth. This comes from the assumptions \eqref{assumption-slow} and  \eqref{assumption-cost}. 

\subsection{The limit PDE  and convergence of the value functions} 
Consider the \textit{fast subsystem},
\begin{equation}
    \label{fast subsys}
    \text{d}Y_{t}= b(x,Y_{t})\,\text{d}t + \sqrt{2}{\Bar{\varrho} }
    \,\text{d}W_{t},\quad Y_{0}=y\in\mathds{R}^m
\end{equation}
obtained by putting $\varepsilon=1$ in \eqref{dynamics}, freezing $x\in\mathds{R}^{n}$, {and applying (A3)}. It is a non-degenerate diffusion process that is known to be \textit{ergodic} by the monotonicity condition (A4), see \cite[\S 3]{bardi2023singular}.
\begin{prop}  
\label{prop:innv} 
Under the assumptions (A1), (A3), (A4), for all $x\in\mathds{R}^{n}$ the process \eqref{fast subsys} has a unique invariant probability measure $\mu_x$, which has finite moments of any order. Moreover, $x\mapsto \mu_x$ is Lipschitz with respect to the 2-Wasserstein distance.
\end{prop}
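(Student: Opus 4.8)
The plan is to obtain all three assertions—existence, uniqueness with finite moments, and Lipschitz dependence—from a single \emph{synchronous coupling} argument that exploits crucially the fact that, by (A3), the diffusion coefficient $\sqrt{2}\,\bar\varrho\,\mathds{I}_m$ of \eqref{fast subsys} is constant. Global existence and uniqueness of a strong solution of \eqref{fast subsys} is standard from the Lipschitz and linear-growth bounds in (A1), and the resulting process is Feller and strong Markov.

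I would first establish uniform-in-time moment bounds. Using (A4) with $y_2=0$ gives the dissipativity estimate $b(x,y)\cdot y\le |b(x,0)|\,|y|-\kappa|y|^2$. Applying It\^o's formula to $|Y_t|^{2p}$ and noting that the second-order term produced by the constant Laplacian is of order $|Y_t|^{2p-2}$, one gets a differential inequality $\tfrac{d}{dt}\mathds{E}|Y_t|^{2p}\le -2p\kappa\,\mathds{E}|Y_t|^{2p}+C_p\bigl(1+\mathds{E}|Y_t|^{2p-1}\bigr)$; by induction on $p$ and Gronwall's lemma, every moment is bounded uniformly in $t\ge 0$.

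The heart of the matter is the contraction. Let $Y^1,Y^2$ solve \eqref{fast subsys} for the same frozen $x$, driven by the same Brownian motion, with initial data $y_1,y_2$. Since the diffusion is constant the noise cancels in the difference $D_t:=Y^1_t-Y^2_t$, which therefore solves pathwise the ODE $\dot D_t=b(x,Y^1_t)-b(x,Y^2_t)$; by (A4), $\tfrac{d}{dt}|D_t|^2\le -2\kappa|D_t|^2$, so $|D_t|\le e^{-\kappa t}|y_1-y_2|$. As $(Y^1_t,Y^2_t)$ couples the two time-$t$ laws, this yields the contraction $W_2(\mathrm{Law}(Y^1_t),\mathrm{Law}(Y^2_t))\le e^{-\kappa t}|y_1-y_2|$, and by conditioning on the initial datum the same estimate holds for arbitrary initial distributions. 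Existence and uniqueness of $\mu_x$ then follow: fixing a start $y$, the Markov property and the contraction give $W_2(\mathrm{Law}(Y_{t+s}),\mathrm{Law}(Y_t))\le e^{-\kappa t}\,(\mathds{E}|Y_s-y|^2)^{1/2}$, which is uniformly bounded in $s$ by the moment estimate; hence $\{\mathrm{Law}(Y_t)\}_{t\ge0}$ is Cauchy in the complete space of measures with finite second moment under $W_2$, and its limit $\mu_x$ is invariant (passing to the limit in the semigroup identity via the Feller property). The uniform moment bounds survive the weak limit by lower semicontinuity, so $\mu_x$ has finite moments of every order, and uniqueness is immediate from the contraction.

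For the Lipschitz dependence on $x$, I would couple $Y^1$ (drift $b(x_1,\cdot)$) and $Y^2$ (drift $b(x_2,\cdot)$) with the same noise and same start, split $b(x_1,Y^1_t)-b(x_2,Y^2_t)$ into a $y$-increment controlled by (A4) and an $x$-increment controlled by the Lipschitz bound of (A1), and so obtain $\tfrac{d}{dt}|D_t|^2\le -2\kappa|D_t|^2+2L|x_1-x_2|\,|D_t|$. Young's inequality turns this into $\tfrac{d}{dt}|D_t|^2\le -\kappa|D_t|^2+(L^2/\kappa)|x_1-x_2|^2$, whence $\limsup_{t\to\infty}\mathds{E}|D_t|^2\le (L/\kappa)^2|x_1-x_2|^2$; letting $t\to\infty$ in $W_2(\mathrm{Law}(Y^1_t),\mathrm{Law}(Y^2_t))\le (\mathds{E}|D_t|^2)^{1/2}$ gives $W_2(\mu_{x_1},\mu_{x_2})\le (L/\kappa)|x_1-x_2|$. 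The argument is otherwise routine; the points deserving real care are verifying that the $W_2$-limit is genuinely invariant and that the moment bounds pass to the weak limit, and the whole scheme relies on the constancy of $\varrho$ in (A3), without which the noise would not cancel in $D_t$ and one would be forced into a substantially harder reflection-coupling argument.
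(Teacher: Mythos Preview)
Your argument is correct and self-contained; it differs genuinely from the paper's treatment, which does not prove any of the three assertions directly but instead cites external sources: existence and uniqueness of $\mu_x$ are referred to \cite{bardi2023singular} and \cite{bogachev2002uniqueness}, the finiteness of moments to \cite[Lemma~3.1]{bardi2023singular}, and the Lipschitz dependence in $W_2$ to the estimate of Bogachev--Kirillov--Shaposhnikov \cite[Corollary~1]{bogachev2014kantorovich}, namely
\[
\mathcal W_2(\mu_{x_1},\mu_{x_2})\le C\Bigl(\int_{\mathds R^m}\bigl|b(x_1,y)-b(x_2,y)\bigr|^2\,\text{d}\mu_{x_2}(y)\Bigr)^{1/2},
\]
combined with the Lipschitz bound on $b$ from (A1). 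Your synchronous-coupling route is more elementary, avoids any black-box results on stationary Fokker--Planck equations, and produces an explicit constant $L/\kappa$; it exploits (A3) in an essential way (the noise cancels in the difference process), whereas the cited estimate applies to more general diffusions. The only places where a reader might want a line more of justification are the passage to the limit establishing invariance of $\mu_x$ and the final step $W_2(\mu_{x_1},\mu_{x_2})\le\limsup_t(\mathds E|D_t|^2)^{1/2}$, which requires combining the already-proved convergence $\mathrm{Law}(Y^i_t)\to\mu_{x_i}$ in $W_2$ with the triangle inequality; both are routine.
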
   

\begin{proof}
For existence and uniqueness, we refer to \cite[\S 3.1]{bardi2023singular} and the references therein. See also \cite[Examples 5.1 \& 5.5]{bogachev2002uniqueness}. The moments being finite is \cite[Lemma 3.1]{bardi2023singular}. For Lipschitz continuity, see \eqref{bogachev}-\eqref{eq: lip W2} below or \cite[Lemma 4.1 \& eq. (4.9)]{bardi2023singular}.
\end{proof}

For $1\leq p <+\infty$, let $\mathfrak{m}_{p}(x)$ denote the $p$-moment of the invariant probability measure $\mu_{x}$ \vspace*{-0.5em}
\begin{equation}\label{def moment}
    \mathfrak{m}_{p}(x) := \int_{\mathds{R}^{m}} |y|^{p}\,\text{d}  \mu_{x}(y).
\end{equation} 

\begin{prop}
\label{prop:moment} 
Assume  (A1), (A3) and (A4) hold. 
Let $\mu_{x_{1}}, \mu_{x_{2}}$ be the unique invariant probability measures corresponding to \eqref{fast subsys} with $x_{1},x_{2}\in \mathds{R}^{n}$. Then 
\begin{equation*}
	\left| \mathfrak{m}_{p}^{1/p}(x_{1}) - \mathfrak{m}_{p}^{1/p}(x_{2}) \right| \leq \mathcal{W}_{p}(\mu_{x_{1}},\mu_{x_{2}}) .
\end{equation*}
In particular,
 there exists a constant $C>0$ such that
\begin{equation*}
	\left| \mathfrak{m}_{1}(x_{1}) - \mathfrak{m}_{1}(x_{2}) \right|  + \left| \mathfrak{m}_{2}^{1/2}(x_{1}) - \mathfrak{m}_{2}^{1/2}(x_{2}) \right| \leq C\, |x_{1} - x_{2}| ,
\end{equation*}
and for all $x\in\mathds{R}^{n}$ 
\begin{equation}\label{eq: moment growth}
	\mathfrak{m}_{1}(x) + \mathfrak{m}_{2}^{1/2}(x) \leq C(1 + |x|).
\end{equation}
\end{prop}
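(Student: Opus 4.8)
The plan is to observe that the normalised moment $\mathfrak{m}_{p}^{1/p}(x)$ is nothing but a Wasserstein distance, after which the first inequality becomes an instance of the triangle inequality for the metric $\mathcal{W}_{p}$. Writing $\delta_{0}$ for the Dirac mass at the origin of $\mathds{R}^{m}$, definition \eqref{def moment} reads
\[
\mathfrak{m}_{p}^{1/p}(x) = \Big(\int_{\mathds{R}^{m}} |y-0|^{p}\,\mathrm{d}\mu_{x}(y)\Big)^{1/p} = \mathcal{W}_{p}(\mu_{x},\delta_{0}),
\]
because $\mu_{x}\otimes\delta_{0}$ is the only coupling of $\mu_{x}$ and $\delta_{0}$. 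By Proposition \ref{prop:innv} the measures $\mu_{x_{1}},\mu_{x_{2}}$ have finite moments of every order, hence belong to the space on which $\mathcal{W}_{p}$ is a genuine distance, and the reverse triangle inequality gives
\[
\big| \mathfrak{m}_{p}^{1/p}(x_{1}) - \mathfrak{m}_{p}^{1/p}(x_{2}) \big| = \big| \mathcal{W}_{p}(\mu_{x_{1}},\delta_{0}) - \mathcal{W}_{p}(\mu_{x_{2}},\delta_{0}) \big| \leq \mathcal{W}_{p}(\mu_{x_{1}},\mu_{x_{2}}),
\]
which is the first assertion.

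For the Lipschitz estimate I would specialise to $p=1$ and $p=2$ and use the monotonicity $\mathcal{W}_{1}\leq \mathcal{W}_{2}$ of the Wasserstein distances in the exponent (immediate from Jensen's inequality applied to an optimal $\mathcal{W}_{2}$-coupling). Together with the first assertion this yields
\[
\big| \mathfrak{m}_{1}(x_{1}) - \mathfrak{m}_{1}(x_{2}) \big| \leq \mathcal{W}_{1}(\mu_{x_{1}},\mu_{x_{2}}) \leq \mathcal{W}_{2}(\mu_{x_{1}},\mu_{x_{2}}),
\]
and likewise $| \mathfrak{m}_{2}^{1/2}(x_{1}) - \mathfrak{m}_{2}^{1/2}(x_{2}) | \leq \mathcal{W}_{2}(\mu_{x_{1}},\mu_{x_{2}})$. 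Since Proposition \ref{prop:innv} states that $x\mapsto\mu_{x}$ is Lipschitz for $\mathcal{W}_{2}$, say with constant $L$, adding the two bounds gives the claimed estimate with $C=2L$.

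The linear growth bound then follows by fixing a reference point, for instance the origin, and applying the estimate just proved: $\mathfrak{m}_{1}(x)\leq \mathfrak{m}_{1}(0) + C|x|$ and $\mathfrak{m}_{2}^{1/2}(x)\leq \mathfrak{m}_{2}^{1/2}(0) + C|x|$, the constants $\mathfrak{m}_{1}(0)$ and $\mathfrak{m}_{2}^{1/2}(0)$ being finite because $\mu_{0}$ has finite moments of all orders (Proposition \ref{prop:innv}). Absorbing them into $C$ concludes.

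I expect no real obstacle: the entire argument is driven by the single identification of the $p$-moment with the $p$-Wasserstein distance to $\delta_{0}$, and the substantive Lipschitz continuity has already been established in Proposition \ref{prop:innv}. The only point needing a little care is to verify that every measure entering the estimates has finite $p$-th moment, so that $\mathcal{W}_{p}$ is indeed a metric and the triangle inequality is legitimate; this is exactly what Proposition \ref{prop:innv} guarantees. In effect this proposition merely repackages the earlier Wasserstein--Lipschitz property in terms of moments.
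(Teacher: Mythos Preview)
Your proof is correct and follows essentially the same approach as the paper: identify $\mathfrak{m}_{p}^{1/p}(x)=\mathcal{W}_{p}(\mu_{x},\delta_{0})$, apply the triangle inequality, use $\mathcal{W}_{1}\le\mathcal{W}_{2}$, and then take $x_{2}=0$ for the growth bound. The only cosmetic difference is that the paper re-derives the $\mathcal{W}_{2}$-Lipschitz estimate explicitly inside this proof (via the Bogachev--Kirillov--Shaposhnikov bound \eqref{bogachev} combined with the Lipschitz continuity of $b$), whereas you invoke Proposition~\ref{prop:innv} directly; since that proposition already records this Lipschitz property, your shortcut is legitimate.
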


\begin{proof}
It is known that any measure $\pi\in \mathcal{P}(\mathds{R}^{m}\times \mathds{R}^{m})$ whose marginals are some measure $\mu\in \mathcal{P}(\mathds{R}^{m})$ and $\delta_{a}$ a Dirac measure supported in some $a\in\mathds{R}^{m}$, is such that $\pi = \mu \otimes \delta_{a}$. That is, such subset of $\mathcal{P}(\mathds{R}^{m}\times \mathds{R}^{m})$  is the singleton $\{\mu \otimes \delta_{a}\}$. Hence
\begin{equation*}
    \mathcal{W}_{p}^{p}(\mu,\delta_{a}) = \int_{\mathds{R}^{m}} |y-a|^{p} \, \text{d}  \mu(y).
\end{equation*}
So when $a=0$ we have $\; \mathcal{W}_{p}^{p}(\mu_{x},\delta_{0})  = \mathfrak{m}_{p}(x). \;$ 
Using the triangle inequality, one gets
\begin{equation*}
\begin{aligned}
	\mathfrak{m}_{p}^{1/p}(x_{1}) \leq \mathcal{W}_{p}(\mu_{x_{1}},\mu_{x_{2}}) + \mathcal{W}_{p}(\mu_{x_{2}},\delta_{0})
	 = \mathcal{W}_{p}(\mu_{x_{1}},\mu_{x_{2}}) + \mathfrak{m}_{p}^{1/p}(x_{2})
\end{aligned}
\end{equation*}
and then $\; \mathfrak{m}_{p}^{1/p}(x_{1}) - \mathfrak{m}_{p}^{1/p}(x_{2}) \leq  \mathcal{W}_{p}(\mu_{x_{1}},\mu_{x_{2}})$. \; Exchanging the roles of $x_{1},x_{2}$ yields the desired result. 

To prove the second statement, we recall from \cite[Corollary 1]{bogachev2014kantorovich} the estimate 
\begin{equation}\label{bogachev}
    \mathcal{W}_{2}(\mu_{x_{1}},\mu_{x_{2}}) \leq C\, \left( \int_{\mathds{R}^{m}} \big| b(x_{1},y) - b(x_{2},y) \big|^{2} \text{d}\, \mu_{x_{2}}(y) \right)^{\frac{1}{2}}.
\end{equation}
Then, with the Lipschitz continuity of $b$, one gets
\begin{equation}\label{eq: lip W2}
    \mathcal{W}_{2}(\mu_{x_{1}},\mu_{x_{2}}) \leq C|x_{1} - x_{2}|.
\end{equation}
Recalling $\mathcal{W}_{1}(\mu_{x_{1}},\mu_{x_{2}}) \leq \mathcal{W}_{2}(\mu_{x_{1}},\mu_{x_{2}})$, one has
\begin{equation*}
	\text{for } \, p=1,2\, :\quad \left| \mathfrak{m}_{p}^{1/p}(x_{1}) - \mathfrak{m}_{p}^{1/p}(x_{2}) \right| \leq C |x_{1} - x_{2}|.
\end{equation*}
Now let $x_{1}=x$, $x_{2} = 0$ and $p=1,2$. One has
\begin{equation*}
\begin{aligned}
	\mathfrak{m}_{1}(x)  \leq C|x| + \mathfrak{m}_{1}(0) \; \text{ and } \; \mathfrak{m}_{2}^{1/2}(x)  \leq C|x|  + \mathfrak{m}_{2}^{1/2}(0). 
\end{aligned}
\end{equation*}
Then $\; \mathfrak{m}_{1}(x) + \mathfrak{m}_{2}^{1/2}(x) \leq C(1 + |x|) \;$ for some $C>0$ constant depending only on  $b(0,\cdot\,),\bar{\varrho}$ in \eqref{fast subsys}.
\end{proof}

Now we can define the \textit{effective Hamiltonian}   $\overline{H}$ and  \textit{effective initial data} $\overline{g}$ associated to the singularly perturbed PDE \eqref{HJB} as follows
\begin{equation}
    \label{eff-Hamiltonian}
\overline{H}(t,x,p,P) := \int_{\mathds{R}^{m}} H(t,x,y,p,P,0) d\mu_{x}(y) , \quad \overline{g}(x) := \int_{\mathds{R}^m}g(x,y)\text{d}\mu_{{x}}(y) .
\end{equation}

Note that they are finite for all entries by the growth conditions \eqref{assumption-slow} and \eqref{assumption-cost}. We expect that the value function $V^{\varepsilon}(t,x,y)$, solution to \eqref{HJB}-\eqref{terminal cond}, converges,  as $\varepsilon\rightarrow0$, to a function $V(t,x)$ independent of $y$ which is the unique solution of the Cauchy problem
\begin{equation}
\label{CP-limit HJB}
\left\{\;
\begin{aligned}
    -V_{t} + \overline{H}(t,x,D_{x}V,D^{2}_{xx}V) + \lambda V(x) & = 0, & \text{in }\; (0,T)\times \mathds{R}^{n} ,\\
    \quad V(T,x) & = \overline{g}(x),& \text{in }\; \mathds{R}^{n} .
\end{aligned}
\right.
\end{equation}
This is the statement of the next theorem for which we need the following

\noindent{\textit{Assumption \textbf{(C)}:}} For $\Sigma :=\sigma\sigma^{\top}(x,y,u)$, one of these two conditions is satisfied
\begin{enumerate}[label=(\alph*)]
	\item $\Sigma$ is independent of $y$ and $u$, i.e. $\sigma=\sigma(x)$; or
	\item the drift of the fast process is independent of $x$, i.e. $b=b(y)$, and the matrix function  $x\mapsto \Sigma(x,y,u)$ has two continuous spatial derivatives satisfying
	\begin{enumerate}[label=(\roman*)]
		\item there exists $\theta(\cdot):\mathds{R}^{m}\to \mathds{R}$ such that 
		$\forall\, u\in U$, $x\in \mathds{R}^{n}$, and $y\in \mathds{R}^{m}$
		\begin{equation*}
		    \max\limits_{1\leq i \leq n} \left| \frac{\partial}{\partial x_{i}}\Sigma(x,y,u)\xi \cdot \xi \right| \leq \theta(y)|\xi|^{2},\; \xi\in \mathds{R}^{n}, \quad  \int_{\mathds{R}^{m}} |\theta(y)|\,\text{d} \mu(y) < +\infty ,
		\end{equation*}
		where $\mu$ in the unique invariant probability measure of the fast process \eqref{fast subsys} (that is now independent of $x$),
		\item $\exists\, K>0$ constant such that for all $u\in U$, $x\in \mathds{R}^{n}$, and $y\in \mathds{R}^{m}$
		\begin{equation*}
		    \max\limits_{1\leq i \leq n} \left| \frac{\partial^{2}}{\partial x_{i}^{2}}\Sigma(x,y,u)\xi \cdot \xi \right| \leq K\,|\xi|^{2},\quad \forall \xi\in \mathds{R}^{n}.
		\end{equation*}
	\end{enumerate}
\end{enumerate}
Note that in the  case (b),  thanks to Proposition \ref{prop:innv}, it is sufficient to have $\theta$ with at most a polynomial growth to satisfy the integrability assumption.

\begin{thm}
\label{thm-conv-value}
Assume (A), (B), and (C). Then the solution $V^{\varepsilon}$ to \eqref{HJB} converges uniformly on compact subsets of  ${ [0,T)}\times\mathds{R}^{n}\times\mathds{R}^{m}$ to the unique continuous viscosity solution of the limit problem \eqref{CP-limit HJB} satisfying a quadratic growth condition in $x$, i.e.
\begin{equation}
\label{qg}
    \exists\;K>0\;\text{such that }\; |V(t,x)|\leq K(1+|x|^{2}),\quad\forall\;(t,x)\in[0,T]\times\mathds{R}^{n} .
\end{equation}
Moreover, the convergence is uniform up to time $t=T$ if $g$ is independent of $y$, i.e., $\overline{g}(x)=g(x)$.
\end{thm}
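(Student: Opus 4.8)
The plan is to prove the convergence by the method of half-relaxed limits combined with Evans' perturbed test function technique, in the spirit of \cite{bardi2010convergence,bardi2011optimal} but now in the unbounded, quadratic-growth setting of \cite{bardi2023singular}. Since Proposition \ref{prelimit sol} gives that the family $\{V^{\varepsilon}\}$ is locally equibounded (with the uniform quadratic bound \eqref{qg}), the upper and lower half-relaxed limits
\[
\overline V(t,x,y) := \limsup_{\substack{\varepsilon\to0\\ (t',x',y')\to(t,x,y)}} V^{\varepsilon}(t',x',y'), \qquad
\underline V(t,x,y) := \liminf_{\substack{\varepsilon\to0\\ (t',x',y')\to(t,x,y)}} V^{\varepsilon}(t',x',y')
\]
are finite, inherit quadratic growth, and satisfy $\underline V\le\overline V$. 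It then suffices to show three things: that both are independent of $y$; that $\overline V$ is a viscosity subsolution and $\underline V$ a supersolution of the effective Cauchy problem \eqref{CP-limit HJB}; and that a comparison principle for \eqref{CP-limit HJB} in the class \eqref{qg} forces $\overline V\le\underline V$, whence $\overline V=\underline V=:V$ is the unique solution and $V^{\varepsilon}\to V$ locally uniformly.

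First I would establish the $y$-independence of $\overline V$ and $\underline V$. This is where the strong monotonicity (A4) enters: the frozen fast subsystem \eqref{fast subsys} is uniformly ergodic, so its law forgets the initial datum $y$ on the $O(\varepsilon)$ time scale. At the PDE level one tests $\overline V$ (resp.\ $\underline V$) against functions of $y$ alone and uses that the rescaled fast operator dominates as $\varepsilon\to0$; the monotonicity (A4) supplies the Lyapunov/barrier estimates (finite moments, Proposition \ref{prop:innv}) needed to rule out any residual $y$-dependence in spite of the lack of boundedness of $V^{\varepsilon}$.

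Next comes the perturbed test function step, the analytic core. Fix a smooth $\psi(t,x)$ touching $\overline V$ from above at $(t_0,x_0)$, set $p=D_x\psi$, $P=D^2_{xx}\psi$, and solve the ergodic (Poisson) cell problem for the corrector $\chi=\chi_{t_0,x_0,p,P}$,
\[
\mathcal L_{x_0}\chi(y) = H(t_0,x_0,y,p,P,0) - \overline H(t_0,x_0,p,P), \qquad \mathcal L_{x_0}\chi := b(x_0,y)\cdot D_y\chi + \overline\varrho\,\Delta_y\chi ,
\]
which is solvable precisely because the right-hand side has zero $\mu_{x_0}$-average, by the definition \eqref{eff-Hamiltonian} of $\overline H$; existence with controlled growth follows from the ergodic theory of \eqref{fast subsys} (Proposition \ref{prop:innv}, and \cite{pardoux2001poisson}). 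Plugging $\phi^{\varepsilon}(t,x,y):=\psi(t,x)+\varepsilon\chi(y)$ into \eqref{HJB}, the scalings make $D_y\phi^{\varepsilon}/\varepsilon=D_y\chi$ and $D^2_{yy}\phi^{\varepsilon}/\varepsilon=D^2_{yy}\chi$ enter at order one while the mixed term $D^2_{xy}\phi^{\varepsilon}/\sqrt\varepsilon$ vanishes identically; letting $\varepsilon\to0$ and using $\sigma^{\varepsilon}\to\sigma$ from (A2) together with the cell equation collapses the operator to $-\psi_t+\overline H(t_0,x_0,p,P)+\lambda\psi$, yielding the subsolution inequality. The supersolution property for $\underline V$ is symmetric.

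I expect the genuine difficulty to lie in two coupled technical points, both controlled by Assumption (C). The first is the solvability and growth control of the corrector $\chi$ on the \emph{unbounded} fast space $\mathds{R}^m$ (rather than a torus), where one must quantify the growth of $\chi$ in $y$ to legitimise the perturbed test function and to pass to the limit; here the moment bounds of Proposition \ref{prop:innv} are used repeatedly. The second is the treatment of the $y$-dependence of $\sigma$ in the second-order slow term: when $\Sigma=\sigma\sigma^{\top}$ depends on $y$ one cannot simply average, and Assumption (C)(b) — integrable-against-$\mu$ bounds on the first two $x$-derivatives of $\Sigma$ — is exactly what keeps the perturbed-test-function computation consistent and identifies $\overline H$ as the $\mu_x$-average of $H(\cdot,\cdot,\cdot,p,P,0)$. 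Finally, the comparison principle in step three is the delicate one for quadratic-growth Hamilton--Jacobi--Bellman equations, for which I would invoke the uniqueness results of Da Lio--Ley \cite{da2006uniqueness}. The last assertion, uniform convergence up to $t=T$, requires ruling out an initial layer at the terminal time: in general $\overline V(T^-,\cdot)$ satisfies only a relaxed terminal condition, but when $g$ is independent of $y$ one has $\overline g=g$ in \eqref{terminal cond}, the layer disappears, and the convergence extends continuously to $t=T$.
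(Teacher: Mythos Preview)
The paper does not give a self-contained proof here: it simply cites \cite[Theorem~4.4]{bardi2023singular}, the companion paper where the argument is carried out in full. Your sketch is a faithful outline of the method used there --- half-relaxed limits from the local equiboundedness of Proposition~\ref{prelimit sol}, $y$-independence of $\overline V$ and $\underline V$ via the ergodicity of the fast subsystem (the Liouville-type argument), Evans' perturbed test function with a corrector solving the Poisson equation for $\mathcal L_{x_0}$ on $\mathds{R}^m$ (Pardoux--Veretennikov theory), and finally comparison in the quadratic-growth class via \cite{da2006uniqueness}. So your approach and the paper's are the same.

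One refinement on where Assumption~(C) actually enters. It is \emph{not} primarily needed in the perturbed-test-function step: the identification of $\overline H$ as the $\mu_x$-average of $H(\cdot,\cdot,\cdot,p,P,0)$ follows from ergodicity alone, regardless of the $y$-dependence of $\Sigma$. Rather, (C) is what makes the \emph{comparison principle} for \eqref{CP-limit HJB} go through. The effective Hamiltonian involves the averaged matrix $\overline\Sigma(x,\nu)=\int \Sigma(x,y,\nu(y))\,\text{d}\mu_x(y)$, and to invoke \cite{da2006uniqueness} one needs its square root $\overline\sigma$ to be Lipschitz in $x$. In case (C)(a) this is immediate; in case (C)(b) the bound on the second $x$-derivatives of $\Sigma$ is precisely the hypothesis of the Stroock--Varadhan square-root lemma \cite[Thm.~5.2.3]{stroock1997multidimensional}, which produces a Lipschitz $\overline\sigma$ (compare the proof of Proposition~\ref{prop eff dyn}). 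So (C) guarantees uniqueness for the limit problem, not the sub/supersolution derivation itself.
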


\begin{proof}
See \cite[Theorem 4.4]{bardi2023singular}.
\end{proof}

In the next sections we provide an optimal control representation of the solution to this limit problem and prove the convergence of the trajectories of \eqref{dynamics} to a suitable limit system.

\section{The effective control problem}
\label{sec: conv traj}

The previous result states the convergence of the value functions $V^{\varepsilon}$ to the solution $V$  of a Cauchy problem of an effective PDE.  In this section we first show that such PDE is the HJB equation for a suitable optimal control problem, that we call {\it effective}. Next we show, under slightly stronger assumptions, that $V$ is indeed the value function of such problem. Since the conditions on the payoff/cost functional are very general, this can be interpreted as a weak ``variational" convergence result for the trajectories of \eqref{dynamics} to the trajectories of the effective system.  Such convergence will be made more precise in the final Section \ref{sec: conv traj 2}.

\subsection{A control interpretation of the limit PDE}\label{sec: control interpretation}

The following result is \cite[Proposition 3.6]{bardi2023singular}, and allows to represent the effective Hamiltonian \eqref{eff-Hamiltonian} as a Bellman Hamiltonian associated to an \textit{effective optimal control problem}, where the control set $U$ is replaced by  the \textit{extended control set} $\; U^{ex}:=\{ \nu : \mathds{R}^{m} \to U\, \text{measurable} \}. $
\begin{prop}
\label{representation-prop}
Assume (A) and (B). 
 Then the effective Hamiltonian \eqref{eff-Hamiltonian} satisfies 
\begin{equation}
    \label{representation}
    \overline{H}(t,x,p,P) =  \min\limits_{\nu \in U^{ex}}
\; \int_{\mathds{R} ^{m}} \left[    -\text{trace}(\sigma\sigma^{\top}P) - f\cdot p - \ell \right]\;\text{d}\mu_{x}(y)
\end{equation}where $\sigma$ and $f$ are computed in $(x,y,\nu(y))$, and $\ell$ in $(t,x,y,\nu(y))$. 
\end{prop}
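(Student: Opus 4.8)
The plan is to reduce the identity to an interchange between pointwise minimization over $U$ and integration against $\mu_x$, completed by a measurable selection argument. Regard $(t,x,p,P)$ as fixed parameters and set
\begin{equation*}
\Lambda(y,u) := -\text{trace}(\sigma\sigma^{\top}(x,y,u)P) - f(x,y,u)\cdot p - \ell(t,x,y,u) .
\end{equation*}
Since the term $-2\,\text{trace}(\sigma\varrho^{\top}Z^{\top})$ in the definition \eqref{H} of $H$ vanishes when $Z=0$, we have $H(t,x,y,p,P,0)=\min_{u\in U}\Lambda(y,u)$, so the claim \eqref{representation} is precisely
\begin{equation*}
\int_{\mathds{R}^{m}} \min_{u\in U}\Lambda(y,u)\,\text{d}\mu_{x}(y) = \min_{\nu\in U^{ex}} \int_{\mathds{R}^{m}} \Lambda(y,\nu(y))\,\text{d}\mu_{x}(y) .
\end{equation*}
First I would record that every integral here is finite: by (A1) and (B) the map $\Lambda(\cdot,u)$ grows at most quadratically in $y$ uniformly in $u\in U$, while $\mu_x$ has finite moments of every order by Proposition \ref{prop:innv}; hence $y\mapsto\min_{u}\Lambda(y,u)$ and, for each measurable $\nu$, $y\mapsto\Lambda(y,\nu(y))$ are $\mu_x$-integrable.

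For the inequality ``$\leq$'' I would use that for each fixed $y$ and every $\nu\in U^{ex}$ one has the pointwise bound $\min_{u\in U}\Lambda(y,u)\le \Lambda(y,\nu(y))$. Integrating against $\mu_x$ and taking the infimum over $\nu$ yields
\begin{equation*}
\int_{\mathds{R}^{m}} \min_{u\in U}\Lambda(y,u)\,\text{d}\mu_{x}(y) \;\le\; \inf_{\nu\in U^{ex}}\int_{\mathds{R}^{m}} \Lambda(y,\nu(y))\,\text{d}\mu_{x}(y) .
\end{equation*}

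For the reverse inequality, and to upgrade the infimum to a genuine minimum, I would construct a measurable minimizer. By (A1) and (B) the function $\Lambda$ is continuous in $(y,u)$, and $U$ is compact; therefore $y\mapsto\min_{u\in U}\Lambda(y,u)$ is measurable and the set-valued map $y\mapsto\arg\min_{u\in U}\Lambda(y,u)$ has nonempty compact values and closed graph, hence is measurable. A measurable selection theorem (e.g. \cite{aubin2009set}) then provides $\nu^{\star}\in U^{ex}$ with $\Lambda(y,\nu^{\star}(y))=\min_{u\in U}\Lambda(y,u)$ for $\mu_x$-a.e.\ $y$. Integrating this identity against $\mu_x$ shows that the infimum on the right-hand side is attained by $\nu^{\star}$ and equals $\overline H(t,x,p,P)$, which simultaneously gives the inequality ``$\ge$'' and the fact that the infimum is a minimum.

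The main obstacle is the measurable selection step: one must verify that the argmin multifunction is honestly measurable, so that the selection theorem applies. Nonemptiness and compactness of its values follow from continuity of $\Lambda$ in $u$ and compactness of $U$, and measurability follows from the joint continuity of $\Lambda$ in $(y,u)$. The remaining work, namely the growth and integrability bookkeeping that makes all the integrals well defined, is routine and is supplied by the quadratic growth in (A1) and (B) together with the finiteness of the moments of $\mu_x$.
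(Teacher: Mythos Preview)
Your argument is correct. The paper itself does not supply a proof of this proposition but refers to \cite[Proposition 3.6]{bardi2023singular}; your approach---reducing to the interchange of pointwise minimization over $U$ with integration against $\mu_x$, and then constructing a measurable selector of the argmin multifunction to realize equality and attainment---is the natural one and is essentially what is done there.
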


\begin{rem}\label{rmk: extended control set}
The extended control set $U^{ex}$ contains a copy of $U$, the constant functions, and it coincides with $L^{p}((\mathds{R}^{m},\mu_{x}),U)$ for all $x$ and all $p\in[1, +\infty]$, since $\mu_{x}$ is finite and $U$ is compact.
\end{rem}

\begin{rem}
\label{rem: decoup1}
When the controls are decoupled from the fast variables, i.e.,
\begin{equation*}
\begin{aligned}
	f(x,y,u) = f_{1}(x,u) + & f_{2}(x,y) , \quad%\\
	\sigma(x,y,u) = \sigma_{1}(x,u) + \sigma_{2}(x,y) ,\\
	\ell(x,y,u) &= \ell_{1}(x,u)+ \ell_{2}(x,y) ,
\end{aligned}
\end{equation*}
and $\sigma_{1}(x,u)^{\top}\sigma_{2}(x,y)=0$ for all $x,y,u$, then repeating the same proof of Proposition \ref{representation-prop} shows that extended controls are not necessary because \eqref{representation} holds with $\nu(y)=y$ and $U^{ex}$ replaced by $U$. This was done in \cite{kushner2012weak} and \cite{bardi2011optimal}, but the decoupling of $f$ does not occur in our motivating model \eqref{sp-learn}. 
Note that the condition $\sigma_{1}(x,u)^{\top}\sigma _{2}(x,y)=0$ is satisfied if the diffusion term $\sigma dW_{s}$ is of the form $\sigma_{1}dW^{1}_{s} + \sigma_{2}dW^{2}_{s}$ with $W^{1}$ and $W^{2}$ independent.
\end{rem}

\subsection{The limit $V$ is a value function}\label{sec:limit V} 
In view of the last result it is natural to define the effective drift and diffusion as 
\begin{equation}
\label{F,G}
    \overline f (x,\upnu) := \int_{\mathds{R}^{m}} f(x,y,\upnu(y))\text{d}\mu_{x}(y) \,, 
     \quad { \overline \sigma}(x,\upnu) := \sqrt{\int_{\mathds{R}^{m}}\sigma\sigma^{\top}(x,y,\upnu(y))\text{d}\mu_{x}(y)} \,,
\end{equation}
the measure $\mu_{x}$ being defined in  Proposition \ref{prop:innv},
and consider as  effective control system 
\begin{equation}
    \label{eff-dynamics}
    \left\{
    \begin{aligned}
        d\hat{X}_{s} &= \overline f(\hat{X}_{s},\upnu_{s})
        \text{d}s +  \sqrt{2}\overline \sigma(\hat{X}_{s},\upnu_{s})\;\text{d}W_{s} , \\
        \upnu_{\cdot} &\in \mathcal{U}^{ex},\quad \text{and }\; {\hat{X}_{t} =x}\in\mathds{R}^{n} ,
    \end{aligned}
    \right.
\end{equation}
where $\;\mathcal{U}^{ex}$ is the set of progressively measurable processes taking values in the extended control set $U^{ex}$. 
Define also the effective data for the payoff functional as
\begin{equation}\label{eff payoff data}
    \begin{aligned}
        \overline{g}(x) := \int_{\mathds{R}^{m}}g(x,y)\,\text{d}\mu_{x}(y)\quad \text{and} \quad
        \overline{\ell}(s,x,{ \upnu}) := \int_{\mathds{R}^{m}}\ell(s,x,y,{ \upnu (y)})\,\text{d}\mu_{x}(y) .
    \end{aligned}
\end{equation}
Note that we can now rewrite the effective Hamiltonian $\overline{H}$ as the Bellman Hamiltonian of such effective optimal control problem
\[
 \overline{H}(t,x,p,P) =  \min\limits_{\nu \in U^{ex}}
\;  \left[    -\text{trace}(\bar\sigma \bar\sigma(x,\nu)^{\top}P) - \bar f(x,\nu)\cdot p - \bar \ell(t, x,\nu) \right].
\]

We want to identify the  solution of \eqref{eff-Hamiltonian} with the value function of the effective problem. In order to fit the classical setting of stochastic optimal control, we need to strengthen our previous assumptions as follows.

\noindent
 \textit{Assumption \textbf{(A')}:} It includes all the conditions  in \textbf{(A)}, 
 with $U\subseteq \mathds{R}^k$ for some $k$ and \textbf{(A1)} complemented 
 by the existence of constants $0< \alpha\leq 1 $, $C'_R\geq 0$ such that for all $x \in \mathds{R}^{n},\; y \in \mathds{R}^{m},\; u_{i} \in U$, $i=1,2$,\; $|x|\leq R$, 
\begin{equation}
\label{assumption-dyn 2}
    |f(x, y, u_{1}) - f(x, y, u_{2})| \leq \; C'_R \big(1+|y| \big)\,|u_{1} - u_{2}|^{\alpha} ,
\end{equation}
\begin{equation}\label{assumption-dyn 3}
    |\sigma(x,y,u_{1}) - \sigma(x,y,u_{2})| \leq C_{R}' (1+ |y|) |u_{1} - u_{2}|^{\alpha}.
\end{equation}
This assumption adds to  \textbf{(A)} the H\"older continuity in $u$ of the drift $f$ and of the diffusion $\sigma$, locally in $(x,y)$.

\noindent{\textit{Assumption \textbf{(B')}:}  It includes all the conditions  in \textbf{(B)}, with $U\subseteq \mathds{R}^k$, and complemented  by the existence of constants $\alpha, \beta \in (0, 1] $, $\bar C_R, C_R'$ depending on $R>0$, $\bar C>0$, and a modulus $\omega$, s.t. for all $t_{i}\in [0,T]$, $x_{i}\in \mathds{R}^{n}, y \in \mathds{R}^{m}, u_{i} \in U$, $i=1,2$, $|x_{i}|\leq R$,
\begin{equation}
\label{assumption-cost 2}
\begin{aligned}
	& |\ell(t_{1},x_{1},y,u_{1}) - \ell(t_{2},x_{2},y,u_{2})| \leq  \bar C_R(1+|y| ) \omega(|t_{1} - t_{2}| + |x_{1} - x_{2}|)\\
	&   \qquad \qquad \qquad \qquad \qquad \qquad \qquad \qquad 
	 + C'_R (1+ |y|) |u_{1} - u_{2}|^{\alpha},\\
	& |g(x_{1},y) - g(x_{2},y)| \leq   \bar C(1+ |x_1|\vee |x_2| + |y|)  |x_{1} - x_{2}|^\beta . 
\end{aligned}
\end{equation}
This assumption adds to  \textbf{(B)} a suitable uniform continuity or local  H\"older continuity of $\ell$ and $g$.

The next assumption concerns the invariant probability measure $\mu_{x}$ introduced in Proposition \ref{prop:innv}.

\noindent{\textit{Assumption \textbf{(D)}:}} For each fixed $x$,  the invariant measure satisfies $\mu_{x}(\text{d} y) = m(x,y) \text{d}y$ and its density $m(x,y)$ is such that for all $R>0$, $\exists\, C_{R}>0$ such that
\begin{equation}\label{eq:m zero}
	0 \leq m(x,y) \leq C_{R}\,m_{\circ}(y),\quad \forall \, |x|\leq R
\end{equation}
where $m_{\circ}$ is the density of a positive finite measure. 

The existence of a density $m(x,y)$ for the invariant measure $\mu_{x}(\text{d}y)$ is guaranteed with assumption \textbf{(A)}; see \cite[Corollary 1.4]{bogachev2000generalization}. Then assumption \textbf{(D)} is automatically satisfied in the case of assumption \textbf{(C)(b)}, because $m$ is constant in $x$. 

\begin{rem}\label{rem: D model}
In the motivating model problems of Sections \ref{LEDR} and \ref{sec: app} all these assumptions are satisfied if $\nabla \phi$ is Lipschitz. The density in assumption \textbf{(D)} is $m_{\circ}(y)=\exp(-\beta \phi(y))$. A more general case where \eqref{eq:m zero} holds is when the drift of the process \eqref{fast subsys} is a gradient, i.e., $b(x,y)= \nabla_{y}V(x,y)$, satisfying assumptions \textbf{(A)}, with $V(x,y) = v_{0}(y) + v_{1}(x,y) + v_{2}(x)$. In this case, the invariant measure is $m(x,y)\text{d}y= \exp[V(x,y)]\text{d}y$, see \cite{bogachev2010invariant}.
If $v_{2}(\cdot)$ is continuous, $v_{1}(\cdot)\leq 0$, and $y\mapsto\exp(v_{0}(y)) \in L^{1}(\mathds{R}^{m})$, then setting $m_{\circ}(y) = \exp(v_{0}(y))$ one has
\begin{equation*}
    m(x,y) \leq \left(\max\limits_{|x|\leq R}e^{v_{2}(x)}\right)e^{v_{0}(y)} = C_{R}\, m_{\circ}(y).
\end{equation*}
\end{rem}

Finally, the control set we shall consider is 
\begin{equation}\label{topo control}
	U^{ex}=L^{\infty}(\mathds{R}^{m},U) \quad \text{endowed with the norm } \quad \|\cdot\|_{L^{2\alpha}_{m_{\circ}}} ,
\end{equation}
 where $m_{\circ}$ is defined in assumption \textbf{(D)}, and $\alpha$ is the one in \eqref{assumption-dyn 2}. We recall the weighted Lebesgue space for $r\in [1,\infty)$
\begin{equation*}
	L^{r}_{m_{\circ}} := \left\{ \varphi: \mathds{R}^{m}\to \mathds{R} \text{ s.t. } \, \|\varphi\|^{r}_{L ^{r}_{m_{\circ}}}:= \int_{\mathds{R}^{m}} |\varphi(y)|^{r}\, m_{\circ}(y)\,\text{d}y <+\infty  \right\}.
\end{equation*}
Note that $\big(U^{ex},  \|\cdot\|_{L^{2\alpha}_{m_{\circ}}}\big)$ is a separable complete normed space, and recall  $\;\mathcal{U}^{ex}$ is the set of progressively measurable processes taking values in $U^{ex}$.

\begin{prop}
\label{prop eff dyn} 
Assume (A'), (C), and (D). Then there exist constants $C$ and, for all $R>0$, $K_R$ such that
\begin{equation}
\label{cont_fbar}
	\big|\bar{f}(x_{1},\nu_{1}) - \bar{f}(x_{2},\nu_{2})\big| \leq C\, |x_{1} - x_{2}| 
	 + K_{R}\|\nu_{1}-\nu_{2}\|_{L^{2\alpha}_{m_{\circ}}}^{\alpha}, \quad \text{ for } |x_{1}|, |x_{2}|\leq R ,
\end{equation}
\begin{equation}
\label{cont_sbar}
\big|\bar{\sigma}(x_{1},\nu_{1}) - \bar{\sigma}(x_{2},\nu_{2})\big|  
	\leq C\, |x_{1} - x_{2}| 
	 + K^{1/2}_{R}\|\nu_{1}-\nu_{2}\|_{L^{2\alpha}_{m_{\circ}}}^{\alpha/2}, \quad \text{ for } |x_{1}|, |x_{2}|\leq R ,
\end{equation}
\begin{equation}
\label{linear_g}
    \big|\bar{f}(x,\nu)\big| \leq C(1+ |x|) , \qquad \big|\bar{\sigma}(x,\nu)\big| \leq C(1+ |x|) \quad \forall\, x, \nu .
\end{equation}
Consequently, for each $\nu_.\in \mathcal{U}^{ex}$ the stochastic differential equation \eqref{eff-dynamics} has a unique strong solution. 
\end{prop}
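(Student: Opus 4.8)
The plan is to establish the two regularity estimates \eqref{cont_fbar}, \eqref{cont_sbar} and the growth bound \eqref{linear_g} separately, and then to invoke the classical existence--uniqueness theorem for SDEs. For the regularity estimates I would treat the effective drift $\bar f$ and the symmetric positive semidefinite matrix $\bar A(x,\nu):=\int_{\mathds{R}^m}\sigma\sigma^\top(x,y,\nu(y))\,\text{d}\mu_x(y)=\bar\sigma\bar\sigma^\top(x,\nu)$ in parallel, decomposing each difference into a pure control-change part (freezing the slow state) and a pure state-change part (freezing the control),
\[
\bar f(x_1,\nu_1)-\bar f(x_2,\nu_2)=\big[\bar f(x_2,\nu_1)-\bar f(x_2,\nu_2)\big]+\big[\bar f(x_1,\nu_1)-\bar f(x_2,\nu_1)\big],
\]
and likewise for $\bar A$. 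The growth bound \eqref{linear_g} is the easiest: integrating the linear growth \eqref{assumption-slow} of $f$ and $\sigma$ against the probability measure $\mu_x$ and using \eqref{eq: moment growth} from Proposition \ref{prop:moment} gives $|\bar f(x,\nu)|\le C(1+|x|+\mathfrak{m}_1(x))\le C'(1+|x|)$ and $|\bar\sigma(x,\nu)|^2\le \text{trace}\,\bar A(x,\nu)=\int\text{trace}(\sigma\sigma^\top)\,\text{d}\mu_x\le C(1+|x|+\mathfrak{m}_2^{1/2}(x))^2$.

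For the control-change part I would freeze $x=x_2$ with $|x_2|\le R$ and use the local H\"older continuity in $u$ from Assumption \textbf{(A')}. By \eqref{assumption-dyn 2}, $|f(x_2,y,\nu_1(y))-f(x_2,y,\nu_2(y))|\le C'_R(1+|y|)|\nu_1(y)-\nu_2(y)|^\alpha$, so after integration a Cauchy--Schwarz inequality with respect to $\mu_{x_2}$ separates the weight $(1+|y|)$, whose $\mu_{x_2}$-integral is controlled by $\mathfrak{m}_2^{1/2}(x_2)\le C(1+R)$ (Proposition \ref{prop:moment}), from the factor $\big(\int|\nu_1-\nu_2|^{2\alpha}\,\text{d}\mu_{x_2}\big)^{1/2}$. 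Assumption \textbf{(D)} then dominates $\mu_{x_2}(\text{d}y)=m(x_2,y)\text{d}y$ by $C_R m_\circ(y)\text{d}y$, turning this factor into $C_R^{1/2}\|\nu_1-\nu_2\|_{L^{2\alpha}_{m_\circ}}^{\alpha}$ and yielding the $\nu$-term of \eqref{cont_fbar}. The same computation for $\bar A$ uses $|\sigma\sigma^\top(u_1)-\sigma\sigma^\top(u_2)|\le C'_R(1+|y|)^2|u_1-u_2|^\alpha$ (from \eqref{assumption-dyn 3} and the linear growth of $\sigma$); the extra factor $(1+|y|)^2$ now requires the fourth moment of $\mu_{x_2}$, finite by Proposition \ref{prop:innv}, and produces $|\bar A(x_2,\nu_1)-\bar A(x_2,\nu_2)|\le K_R\|\nu_1-\nu_2\|_{L^{2\alpha}_{m_\circ}}^{\alpha}$.

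For the state-change part I would freeze $\nu=\nu_1$ and split once more into an integrand-change term, bounded by $C|x_1-x_2|$ using the Lipschitz continuity in $x$ of $f$ (resp.\ of $\sigma\sigma^\top$ on $\{|x|\le R\}$) from \textbf{(A1)}, and a measure-change term $\int f(x_2,y,\nu_1(y))\,\text{d}(\mu_{x_1}-\mu_{x_2})(y)$. Under Assumption \textbf{(C)(b)} the fast drift, hence $\mu_x\equiv\mu$, is independent of $x$ and this term vanishes; under \textbf{(C)(a)} it must be controlled through the Lipschitz dependence $x\mapsto\mu_x$ of Proposition \ref{prop:innv} together with the density domination \eqref{eq:m zero} and the weighted-variation estimates of \cite{bardi2023singular,bogachev2014kantorovich}, pairing the linear growth of the integrand against an $L^1$-type bound on $m(x_1,\cdot)-m(x_2,\cdot)$. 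This measure-change term is the main obstacle, precisely because the extended control $\nu_1$ makes $y\mapsto f(x_2,y,\nu_1(y))$ merely measurable, destroying the $y$-regularity that a naive Kantorovich duality would require.

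It remains to transfer the matrix estimates for $\bar A$ to $\bar\sigma=\sqrt{\bar A}$. For the control-change part the map $A\mapsto\sqrt A$ on symmetric positive semidefinite matrices is only $\tfrac12$-H\"older (\cite{wihler2009holder,bhatia1997matrix}), which halves the exponent and gives the term $K_R^{1/2}\|\nu_1-\nu_2\|_{L^{2\alpha}_{m_\circ}}^{\alpha/2}$ of \eqref{cont_sbar}. For the state-change part mere $\tfrac12$-H\"older continuity would only give a $|x_1-x_2|^{1/2}$ bound, so to reach the Lipschitz estimate in \eqref{cont_sbar} I would exploit that Assumption \textbf{(C)} bounds the first and second spatial derivatives of $\Sigma=\sigma\sigma^\top$: this makes $x\mapsto\bar A(x,\nu)$ of class $C^{1,1}$ uniformly in $\nu$ on $\{|x|\le R\}$ (in case \textbf{(C)(a)}, $\bar A=\Sigma(x)$ directly; in case \textbf{(C)(b)}, one differentiates under the fixed integral $\mu$), and a Glaeser-type inequality then shows that the square root of a $C^{1,1}$ positive semidefinite matrix field is Lipschitz, with constant depending only on $\sup\|\partial^2_{xx}\bar A\|$ --- this is exactly why the second-derivative bounds in \textbf{(C)} are imposed. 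Combining the control- and state-change parts yields \eqref{cont_fbar}--\eqref{cont_sbar}; together with the linear growth \eqref{linear_g} and the uniformity of the $x$-Lipschitz constant in $\nu$, the coefficients of \eqref{eff-dynamics} are Lipschitz and of linear growth in $\hat X$ for every progressively measurable $\nu_\cdot$, so the classical theorem (e.g.\ \cite{yong1999stochastic}) provides a unique strong solution.
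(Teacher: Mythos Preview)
Your overall decomposition (control-change plus state-change, the latter split into an integrand-change term and a measure-change term) matches the paper's three-term splitting $\text{(I)}+\text{(II)}+\text{(III)}$, and your treatment of the control-change term, the growth bound, the $\tfrac12$-H\"older square root for the $\nu$-dependence of $\bar\sigma$, and the Stroock--Varadhan/Glaeser argument under \textbf{(C)(b)} all coincide with the paper's proof.

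There are two points where your sketch diverges from, or falls short of, the paper's argument.

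\medskip
\textbf{The measure-change term for $\bar f$.} You correctly flag $\int f(x_2,y,\nu_1(y))\,\text{d}(\mu_{x_1}-\mu_{x_2})(y)$ as the main obstacle, since $y\mapsto f(x_2,y,\nu_1(y))$ need not be Lipschitz. The paper's way around this is more concrete than ``weighted-variation estimates'': it first dominates $|f(x_2,y,\nu_2(y))|$ by the explicit function $\phi(y):=C(1+|x_2|+|y|)$, which \emph{is} $C$-Lipschitz in $y$, passes to the densities $m_i$ of $\mu_{x_i}$, and splits the absolute value into the two nonnegative integrals $\int\phi\,(m_1-m_2)^{\pm}\,\text{d}y$. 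Each of these is then bounded by the Kantorovich-type coupling argument: for any $\pi$ with marginals $\mu_{x_1},\mu_{x_2}$ one has $\int\phi\,(m_1-m_2)\,\text{d}y=\iint(\phi(y)-\phi(y'))\,\text{d}\pi(y,y')\le C\iint|y-y'|\,\text{d}\pi$, hence $\le C\,\mathcal{W}_1(\mu_{x_1},\mu_{x_2})\le C\,\mathcal{W}_2(\mu_{x_1},\mu_{x_2})\le C|x_1-x_2|$ by \eqref{bogachev}. Your phrase ``pairing the linear growth of the integrand against an $L^1$-type bound on $m(x_1,\cdot)-m(x_2,\cdot)$'' hints at this, but the crucial move---replacing the non-regular integrand by its Lipschitz envelope \emph{before} invoking Wasserstein duality---should be stated explicitly.

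\medskip
\textbf{The Lipschitz estimate for $\bar\sigma$ under \textbf{(C)(a)}.} Your Glaeser-type argument requires $x\mapsto\bar A(x,\nu)$ to be $C^{1,1}$, but under \textbf{(C)(a)} one only knows $\Sigma=\sigma\sigma^\top(x)$ is (locally) Lipschitz via \textbf{(A1)}; no second-derivative bound is assumed. The paper avoids this entirely: since in case \textbf{(C)(a)} one has $\bar A(x,\nu)=\sigma(x)\sigma(x)^\top$, one may take the effective diffusion coefficient to be the original Lipschitz $\sigma(x)$ itself (the law of the SDE depends only on $\sigma\sigma^\top$), rather than the abstract matrix square root. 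You should make the same choice in this case and reserve the square-root regularity argument for \textbf{(C)(b)}, where the $C^{1,1}$ hypothesis is actually available.
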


\begin{proof} Here $C$ denotes any positive constant which may change from line to line and from an estimate to another, and is only depending on the data (through the constants in the standing assumptions). We begin with the effective drift. 

Let $(x_{i}, \nu_{i}) \in \mathds{R}^{n}\times L^{\infty}(\mathds{R}^{m},U)$, $i=1,2$. We have
\begin{equation*}
    \begin{aligned}
        \bar{f}(x_{1},\nu_{1}) - \bar{f}(x_{2},\nu_{2}) & = \int_{\mathds{R}^{m}} f(x_{1},y,\nu_{1}(y))\,\text{d} \mu_{x_{1}}(y) - \int_{\mathds{R}^{m}} f(x_{2},y,\nu_{1}(y))\,\text{d} \mu_{x_{1}}(y) \\
        & \quad + \int_{\mathds{R}^{m}} f(x_{2},y,\nu_{1}(y))\,\text{d} \mu_{x_{1}}(y) - \int_{\mathds{R}^{m}} f(x_{2},y,\nu_{2}(y))\,\text{d} \mu_{x_{1}}(y)\\
        & \quad + \int_{\mathds{R}^{m}} f(x_{2},y,\nu_{2}(y))\,\text{d} \mu_{x_{1}}(y) - \int_{\mathds{R}^{m}} f(x_{2},y,\nu_{2}(y))\,\text{d} \mu_{x_{2}}(y)\\
        & = : \text{(I)} + \text{(II)} + \text{(III)}.
    \end{aligned}
\end{equation*}
We have the following
\begin{equation*}
\begin{aligned}
    |\text{I}| & \leq \int_{\mathds{R}^{m}} |f(x_{1},y,\nu_{1}(y)) - f (x_{2},y,\nu_{1}(y))|\, \text{d} \mu_{x_{1}}(y) \leq C\,|x_{1} - x_{2}|.
\end{aligned}
\end{equation*}
Next, for $R>0$ such that $|x_{1}|,|x_{2}| \leq R$, using assumptions (A') and (D), as well as Cauchy-Schwarz inequality, we get	
\begin{equation*}
\begin{aligned}
    |\text{II}| & \leq \int_{\mathds{R}^{m}} |f(x_{2},y,\nu_{1}(y)) - f(x_{2},y,\nu_{2}(y))|\,\text{d} \mu_{x_{1}}(y)\\
    & \leq C'_R\int_{\mathds{R}^{m}} \big(1+  |y|\big)|\nu_{1}(y) - \nu_{2}(y)|^{\alpha}\, \text{d} \mu_{x_{1}}(y)\\
	& \leq C'_R C_R\left( 
	\|\nu_{1}-\nu_{2}\|_{L^{\alpha}_{m_{\circ}}}^{\alpha}  +   \mathfrak{m}_{2}^{1/2}(x_{1})\left(\int_{\mathds{R}^{m}}|\nu_{1}(y) - \nu_{2}(y)|^{2\alpha}\,m_{\circ}(y)\text{d}y 
	\right)^{\frac{1}{2}}\right)\\
	& \leq K_{R} \|\nu_{1} - \nu_{2}\|^{\alpha}_{L^{2\alpha}_{m_{\circ}}}
	\end{aligned}
\end{equation*}
where $K_{R}>0$ is a constant depending on $R$, and where we have used \eqref{eq: moment growth} and the fact that $\|\cdot\|_{L^{\alpha}_{m_{\circ}}} \leq C \,\|\cdot\|_{L^{2\alpha}_{m_{\circ}}}$ with $C>0$ depending only on the total mass of $m_{\circ}$ assumed to be finite with assumption (D).

To estimate (III) it is useful to introduce the densities and write $\text{d}\mu_{x_{1}}(y)=m_1(y)\text{d}y$,  $\text{d}\mu_{x_{2}}(y)=m_2(y)\text{d}y$.
\begin{equation*}
\begin{aligned}
    |\text{III}| & \leq \int_{\mathds{R}^{m}} |f(x_{2},y,\nu_{2}(y)) (m_1-m_2)(y)|\,\text{d} y
     \\& \leq C\, \int_{\mathds{R}^{m}}(1+|x_{2}| + |y|)\,(m_1-m_2)^+(y)\,\text{d} y   
     \\ & \qquad + C\, \int_{\mathds{R}^{m}}(1+|x_{2}| + |y|)\,(m_1-m_2)^-(y)\,\text{d} y   
     =: \text{(III.1)} + \text{(III.2)}.
\end{aligned}
\end{equation*}
Call $\Omega:=\{ y\in {\mathds{R}^{m}} : m_1(y)\geq m_2(y) \}$ and $\phi(y):= C(1+|x_{2}| + |y|)$. Let $\pi$ be any probability measure on $\mathds{R}^{m}\times \mathds{R}^{m}$ with marginals $\mu_{x_{1}}$ and $\mu_{x_{2}}$.
Then 
\begin{equation*}
\begin{aligned}
\text{(III.1)} &= \int_\Omega \phi(y) \,(m_1-m_2)(y)\,\text{d} y   = \iint_{\Omega\times\Omega} (\phi(y) - \phi(y')) \,\text{d}\pi (y, y')
\\ & \leq C  \iint_{{\mathds{R}^{m}}\times{\mathds{R}^{m}}}|y - y'| \,\text{d}\pi (y, y') .
\end{aligned}
\end{equation*}
Since $\pi$ is arbitrary with the given marginals we get
\begin{equation*}
\text{(III.1)} \leq C \mathcal{W}_{1}(\mu_{x_{1}},\mu_{x_{2}})\;\leq C \mathcal{W}_{2}(\mu_{x_{1}},\mu_{x_{2}}) .
\end{equation*}
Using \cite[Corollary 1]{bogachev2014kantorovich}, we have
\begin{equation}\label{eq: est W2}
    \mathcal{W}_{2}(\mu_{x_{1}},\mu_{x_{2}}) \leq C\, \left( \int_{\mathds{R}^{m}} \big| b(x_{1},y) - b(x_{2},y) \big|^{2} \text{d}\, \mu_{x_{2}}(y) \right)^{\frac{1}{2}}
\end{equation}
hence, using Lipschitz continuity of the drift $b$ in $x$, we get
\begin{equation}\label{eq: TV diff 2}
    \text{(III.1)} \leq C\, |x_{1} - x_{2}|.
\end{equation}
The  term  \text{(III.2)} can be estimated in the same way by replacing $\Omega$ with $\{ m_1\leq m_2 \}$. Finally, by exchanging the roles of $(x_{1},\nu_{1})$ and $(x_{2},\nu_{2})$ and putting the estimates together, we obtain \eqref{cont_fbar}.

Next we check  the Lipschitz property of the effective diffusion. In case of assumption (C)(a) we have $\bar{\sigma}(x)=\sigma (x)$ which is Lipschitz by Assumption (A).  
In the case (C)(b), the invariant measure $\mu$ is independent of $x$, so we have
\begin{equation*}
   \bar{\sigma}\bar{\sigma}^{\top}(x,\nu) :=   \bar{\Sigma}(x,\nu) := \int_{\mathds{R}^{m}}\sigma\sigma^{\top}(x,y,\nu(y))\,\text{d}\mu(y)= \int_{\mathds{R}^{m}}\Sigma(x,y,\nu(y))\,\text{d}\mu(y).
\end{equation*}
By the condition (C)(b)(i) we can differentiate the integral  w.r.t. $x$ under the integral sign (by means of dominated convergence theorem). %, e.g. \cite[Theorem 2.14(b), p.33]{folland2009guide}). 
Then (C)(b)(ii) ensures that $ \bar{\Sigma}$ has bounded second derivatives in $x$, uniformly in $\nu$. Therefore it has a square root $\bar{\sigma}, $ Lipschitz in $x$ uniformly in $\nu$, by \cite[Theorem 5.2.3, p.132]{stroock1997multidimensional}. 
The linear growth in $x$ uniform in $\nu$ follows easily.
 
Now we turn to the continuity of  $\Bar{\sigma}$ with respect to $\nu$. In the case (C)(a), $\Sigma$ does not depend on $u$ and so $\Bar{\sigma}$ is constant in $\nu$. For the case (C)(b), given $R>0$, we fix $x\in \mathds{R}$ such that $|x|\leq R$. Let $\nu_{i} \in U^{ex}$, $i=1,2$. 
Noting that $\bar{\Sigma}$ is a symmetric positive semidefinite matrix, the following inequality holds (see, e.g. \cite[inequality (3.2)]{wihler2009holder} or \cite[inequality (X.2), p. 290]{bhatia1997matrix}):
\begin{equation}\label{eq:matrix ineq}
    \left| \bar{\sigma}(x,\nu_{1}) - \bar{\sigma}(x,\nu_{2}) \right|  = \left| \bar{\Sigma}^{\frac{1}{2}}(x,\nu_{1}) - \bar{\Sigma}^{\frac{1}{2}}(x,\nu_{2}) \right| \leq C \, \left| \bar{\Sigma}(x,\nu_{1}) - \bar{\Sigma}(x,\nu_{2}) \right|^{\frac{1}{2}}
\end{equation}
where $C>0$ is a constant depending on the dimension $n$ only, and the norm is $|\Sigma|^{2} = \text{trace}\left(\Sigma\Sigma^{\top}\right)$. Therefore we have
\begin{equation*}
\begin{aligned}
    & \left| \bar{\Sigma}(x,\nu_{1}) - \bar{\Sigma}(x,\nu_{2}) \right| 
    \\ & \quad 
    \leq \left|\int_{\mathds{R}^{m}}\sigma\sigma^{\top}(x,y,\nu_{1}(y))\,\text{d}\mu(y) - \int_{\mathds{R}^{m}}\sigma(x,y,\nu_{1}(y))\sigma^{\top}(x,y,\nu_{2}(y))\,\text{d}\mu(y)\right| 
    \\
    & \quad \quad \quad \quad \quad + \left|\int_{\mathds{R}^{m}}\sigma(x,y,\nu_{1}(y))\sigma^{\top}(x,y,\nu_{2}(y))\,\text{d}\mu(y) - \int_{\mathds{R}^{m}}\sigma\sigma^{\top}(x,y,\nu_{2}(y))\,\text{d}\mu(y)\right|
    \\
    & \quad \leq \int_{\mathds{R}^{m}}C_{R}(1+|y|)\big|\sigma(x,y,\nu_{1}(y))-\sigma(x,y,\nu_{2}(y))\big|\,\text{d}\mu(y) \\
    & \quad \quad \quad \quad \quad + \int_{\mathds{R}^{m}} C_{R}(1+|y|) \big|\sigma(x,y,\nu_{1}(y))-\sigma(x,y,\nu_{2}(y))\big|\,\text{d}\mu(y)\\
    & \quad \leq  \int_{\mathds{R}^{m}} 2C_{R}C_{R}'(1+|y|)^{2} \big|\nu_{1}(y)-\nu_{2}(y)\big|^{\alpha}\,\text{d}\mu(y)
\end{aligned}
\end{equation*}
where we have used, respectively, \eqref{assumption-slow} and \eqref{assumption-dyn 3} in the last two inequalities. 
We conclude using Cauchy-Schwarz inequality together with \eqref{eq:m zero} which yield
\begin{equation*}
\begin{aligned}
        \left| \bar{\Sigma}(x,\nu_{1}) - \bar{\Sigma}(x,\nu_{2}) \right| \leq K_{R} \,\|\nu_{1} - \nu_{2}\|^{\alpha}_{L^{2\alpha}_{m_{\circ}}}
\end{aligned}
\end{equation*}
where $K_{R}>0$ is a constant depending on $R$ and on the $4$th moment of $\mu$ (the latter being independent of $x$ thanks to assumption (C)(b)). Back to \eqref{eq:matrix ineq}, we finally have the desired inequality
\begin{equation*}
    \left| \bar{\sigma}(x,\nu_{1}) - \bar{\sigma}(x,\nu_{2}) \right| \leq K_{R}^{1/2} \,\big\|\nu_{1} - \nu_{2}\big\|^{\alpha/2}_{L^{2\alpha}_{m_{\circ}}}.
\end{equation*}
To prove \eqref{linear_g} we use \eqref{assumption-slow} to get
\begin{equation*}
    \begin{aligned}
        & \big|\bar{f}(x,\nu)\big| \leq  \big|\bar{f}(0,\nu)\big| + C |x| \leq \int_{\mathds{R}^m} C(1+|y|) \text{d}\mu_0(y) + C |x| \leq C +\mathfrak{m}_1(0) + C |x|, \\
        & \text{and }\; \big|\bar\sigma\bar\sigma^{\top}(x,\nu)\big| \leq  \big|\bar\sigma\bar\sigma^{\top}(0,\nu)\big| + C^2 |x|^2 % \leq \int_{\mathds{R}^m} C(1+|y|) \text{d}\mu_0(y) + C |x| 
\leq C^2 +\mathfrak{m}_2(0) + C |x|^2.
    \end{aligned}
\end{equation*}

Finally, the well-posedeness of the SDE in \eqref{eff-dynamics} is a direct consequence of the latter properties; see e.g. \cite[Corollary 6.4 in Chap. 1, p. 44]{yong1999stochastic}.
% or \cite[Theorems 2.5 \& 2.9 in Ch. 5, p.287 \& 289]{karatzas1991brownian}. 
\end{proof}

\begin{prop}
\label{prop eff payoff}
Assume (B') and (D). Then the effective data defined by \eqref{eff payoff data} satisfy the following inequalities, for $|x_1|,|x_2| \leq R$, \vspace*{-0.5em}
\begin{equation}
\label{effl_Lip}
     |\bar{\ell}(t_{1},x_{1},\nu_{1}) - \bar{\ell}(t_{2},x_{2},\nu_{2})| 
 \leq C_R\big( \omega(|t_{1}-t_{2}| + |x_{1}-x_{2}|)  + \|\nu_{1}-\nu_{2}\|_{L^{2\alpha}_{m_{\circ}}}^{\alpha}  \big) , \vspace*{-0.5em}
\end{equation}
\begin{equation}
\label{effl_quad}
    |\bar{\ell}(t,x,\nu)|  \leq  C(1+|x|^{2}) , \vspace*{-0.5em}
\end{equation}
\begin{equation}
\label{effg_Hol}
    |\bar g(x_1) - \bar g(x_2)|\leq C(1+|x_1|\vee |x_2|)|x_1 - x_2|^\beta , \quad  |\bar g(x)|  \leq  C(1+|x|^{2}) .
\end{equation}
\end{prop}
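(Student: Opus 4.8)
The plan is to mirror the proof of Proposition \ref{prop eff dyn}: for each difference I would introduce a telescoping decomposition into a term where only the explicit arguments of $\ell$ (resp. $g$) change, a term where only the extended control changes, and a term where only the invariant measure changes, and then estimate the three pieces using Assumption (B'), Assumption (D), the moment bound \eqref{eq: moment growth}, and the Wasserstein estimate \eqref{eq: est W2}. The two growth bounds are immediate: for \eqref{effl_quad} I integrate the growth of $\ell$ from (B) against $\mu_x$, getting $|\bar\ell(t,x,\nu)|\leq\int_{\mathds{R}^m}K(1+|x|^2+|y|)\,\text{d}\mu_x(y)=K(1+|x|^2+\mathfrak{m}_1(x))$, and \eqref{eq: moment growth} gives $\mathfrak{m}_1(x)\leq C(1+|x|)$, hence $|\bar\ell|\leq C(1+|x|^2)$; the growth of $\bar g$ in \eqref{effg_Hol} is identical.

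For the modulus estimate \eqref{effl_Lip} I would write $\bar\ell(t_1,x_1,\nu_1)-\bar\ell(t_2,x_2,\nu_2)=\text{(I)}+\text{(II)}+\text{(III)}$, where (I) replaces $(t_1,x_1)$ by $(t_2,x_2)$ inside $\ell$ while integrating against $\mu_{x_1}$, (II) replaces $\nu_1$ by $\nu_2$ against $\mu_{x_1}$, and (III) replaces $\mu_{x_1}$ by $\mu_{x_2}$. Term (I) is bounded using the first line of \eqref{assumption-cost 2} by $\bar C_R\,\omega(|t_1-t_2|+|x_1-x_2|)\int_{\mathds{R}^m}(1+|y|)\,\text{d}\mu_{x_1}$, and $\int(1+|y|)\,\text{d}\mu_{x_1}=1+\mathfrak{m}_1(x_1)\leq C_R$ for $|x_1|\leq R$ by \eqref{eq: moment growth}. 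Term (II) is treated verbatim as term (II) in the proof of Proposition \ref{prop eff dyn}: the Hölder-in-$u$ bound in \eqref{assumption-cost 2}, Assumption (D), Cauchy--Schwarz, and $\mathfrak{m}_2^{1/2}(x_1)\leq C_R$ yield $|\text{(II)}|\leq K_R\|\nu_1-\nu_2\|_{L^{2\alpha}_{m_\circ}}^\alpha$.

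The \emph{main obstacle} is term (III), together with the analogous measure-change term for $\bar g$: the integrand $y\mapsto\ell(t_2,x_2,y,\nu_2(y))$ is not Lipschitz in $y$, since $\nu_2\in L^\infty$ need not be continuous, so one cannot apply Kantorovich--Rubinstein duality to the composite map directly. The remedy, exactly as in Proposition \ref{prop eff dyn}, is to dominate the integrand by its growth bound $|\ell(t_2,x_2,y,\nu_2(y))|\leq K(1+|x_2|^2+|y|)$, which is linear in $|y|$ and hence Lipschitz in $y$ with a $\nu$-independent constant; writing $\text{d}\mu_{x_1}-\text{d}\mu_{x_2}=(m_1-m_2)^+\text{d}y-(m_1-m_2)^-\text{d}y$ and running the coupling argument of Proposition \ref{prop eff dyn} gives $|\text{(III)}|\leq C_R\,\mathcal{W}_1(\mu_{x_1},\mu_{x_2})\leq C_R\,\mathcal{W}_2(\mu_{x_1},\mu_{x_2})\leq C_R|x_1-x_2|$ via \eqref{eq: est W2} and the Lipschitz continuity of $b$. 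Since $r\mapsto r$ is itself a modulus, this is absorbed into the right-hand side of \eqref{effl_Lip} after replacing $\omega$ by $\omega(r)+r$ if necessary.

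Finally, for the Hölder bound in \eqref{effg_Hol} I would split $\bar g(x_1)-\bar g(x_2)$ into a term where $g$ changes (against $\mu_{x_1}$) and a term where the measure changes. The first is controlled by the Hölder-in-$x$ bound for $g$ in \eqref{assumption-cost 2}, giving $\bar C|x_1-x_2|^\beta\int_{\mathds{R}^m}(1+|x_1|\vee|x_2|+|y|)\,\text{d}\mu_{x_1}\leq C(1+|x_1|\vee|x_2|)|x_1-x_2|^\beta$ by \eqref{eq: moment growth}. The measure-change term is easier here than for $\bar\ell$, because $g(x_2,\cdot)$ is itself Lipschitz in $y$ uniformly in $x_2$ by (B), so Kantorovich--Rubinstein applies directly and yields $\leq C\,\mathcal{W}_1(\mu_{x_1},\mu_{x_2})\leq C|x_1-x_2|$ through \eqref{eq: est W2}. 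To recast this linear bound in the required Hölder form I distinguish $|x_1-x_2|\leq 1$, where $|x_1-x_2|\leq|x_1-x_2|^\beta\leq(1+|x_1|\vee|x_2|)|x_1-x_2|^\beta$, from $|x_1-x_2|>1$, where $|x_1-x_2|\leq 2(1+|x_1|\vee|x_2|)\leq 2(1+|x_1|\vee|x_2|)|x_1-x_2|^\beta$; in both cases the claimed estimate follows.
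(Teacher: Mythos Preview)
Your proposal is correct and follows essentially the same strategy as the paper: the same three-term telescoping for $\bar\ell$, the same use of the growth bound and the density-splitting/Wasserstein argument from Proposition \ref{prop eff dyn} for term (III), and the same moment estimates for the growth bounds. The only minor deviation is that for the measure-change term in $\bar g$ you exploit directly the uniform Lipschitz-in-$y$ property of $g$ from (B) rather than dominating by the growth bound, which is a slight simplification; the paper merely says the proof of \eqref{effg_Hol} is ``analogous, by using the second inequality in \eqref{assumption-cost 2}.''
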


\begin{proof}
Here $C_R$ denotes a  constant which may change from line to line, and only depends on the data of the problem and on $R$. 
For $(t_{i},x_{i},\nu_{i})\in [0,T] \times \mathds{R}^{n}\times L^{\infty}(\mathds{R}^{m},U)$, $i=1,2$, \; $|x_i|\leq R$, we have
\begin{equation*}
\begin{aligned}
    & \bar{\ell}(t_{1},x_{1},\nu_{1}) - \bar{\ell}(t_{2},x_{2},\nu_{2})  \\
    & \quad \quad \quad = \int_{\mathds{R}^{m}}\ell(t_{1},x_{1},y,\nu_{1}(y))\,\text{d} \mu_{x_{1}}(y) 
    \!-\! \int_{\mathds{R}^{m}}\ell(t_{2},x_{2},y,\nu_{1}(y))\,\text{d} \mu_{x_{1}}(y)\\
    &  \quad \quad \quad \quad \quad \quad + \int_{\mathds{R}^{m}}\ell(t_{2},x_{2},y,\nu_{1}(y))\,\text{d} \mu_{x_{1}}(y) \!-\! \int_{\mathds{R}^{m}}\ell(t_{2},x_{2},y,\nu_{2}(y))\,\text{d} \mu_{x_{1}}(y)\\
    &  \quad \quad \quad \quad \quad \quad + \int_{\mathds{R}^{m}}\ell(t_{2},x_{2},y,\nu_{2}(y))\,\text{d} \mu_{x_{1}}(y) \!-\! \int_{\mathds{R}^{m}}\ell(t_{2},x_{2},y,\nu_{2}(y))\,\text{d} \mu_{x_{2}}(y)\\
    & \quad \quad \quad =  \text{(I)} + \text{(II)}+ \text{(III)} .
\end{aligned}
\end{equation*}
By \eqref{assumption-cost 2} and \eqref{eq: moment growth}
\begin{equation*}
    \text{(I)}  \leq \bar C_R\big(1+\mathfrak{m}_{1}(x_{1})\big)\,\omega(|t_{1}-t_{2}|+|x_{1}-x_{2}|) \leq C_R \omega(|t_{1}-t_{2}|+|x_{1}-x_{2}|) .
\end{equation*}
Next, we use assumption (D) as in the estimate of  the term (II) in the proof of Proposition \ref{prop eff dyn} to get 
\begin{equation*}
    \text{(II)} 
	 \leq C'_R \int_{\mathds{R}^{m}} (1+ |y|)|\nu_{1}(y) - \nu_{2}(y)|^{\alpha}\, \text{d}\mu_{x_{1}}(y) \leq K_{R} \|\nu_{1} - \nu_{2}\|_{L^{2\alpha}_{m_{\circ}}}^{\alpha}
\end{equation*} 
For  the last term we use \eqref{assumption-cost} and proceed as for the term (III) in the proof of Proposition \ref{prop eff dyn} by means of the densities $m_1, m_2$:
\begin{equation*}
\begin{aligned}
  |\text{III}| & \leq \int_{\mathds{R}^{m}} \big| \ell(t_2, x_{2},y,\nu_{2}(y)) (m_1-m_2)(y)\big|\,\text{d} y
     \\& \leq K\, \int_{\mathds{R}^{m}}(1+|x_{2}|^2 + |y|)\,(m_1-m_2)^+(y)\,\text{d} y   
     \\ &+ K\, \int_{\mathds{R}^{m}}(1+|x_{2}|^2 + |y|)\,(m_1-m_2)^-(y)\,\text{d} y   
     =: \text{(III.1)} + \text{(III.2)}.
\end{aligned}
\end{equation*}
The integrals in (III.1) and (III.2) are estimated as the corresponding terms  in the proof of Proposition \ref{prop eff dyn} by $C |x_{1}-x_{2}|$. Summing up the upperbounds of the three terms (I), (II), and (III),  and exchanging the roles of $(t_{i},x_{i},\nu_{i})$, $i=1,2$, we get
\begin{equation*}
\begin{aligned}
    & |\bar{\ell}(t_{1},x_{1},\nu_{1}) - \bar{\ell}(t_{2},x_{2},\nu_{2})| \\
    & \quad \leq C_R\big( \omega(|t_{1}-t_{2}| + |x_{1}-x_{2}|)  + |x_{1}-x_{2}| + \|\nu_{1}-\nu_{2}\|_{L^{2\alpha}_{m_{\circ}}}^{\alpha} \big) .
\end{aligned}
\end{equation*}
To check the growth \eqref{effl_quad}, let $(t,x,\nu)\in [0,T] \times\mathds{R}^{m}\times U^{ex}$ and use \eqref{assumption-cost} to get 
\begin{equation*}
\begin{aligned}
    |\bar{\ell}(t,x,\nu)| & \leq \int_{\mathds{R}^{m}} |\ell(t,x,y,\nu(y))|\,\text{d} \mu_{x}(y)  \leq K\left(1 + |x|^{2} + \int_{\mathds{R}^{m}} |y|\,\text{d} \mu_{x}(y)\right)\\
    & = K(1 + |x|^{2} + \mathfrak{m}_{1}(x)) \leq C(1+|x|^{2})
\end{aligned}
\end{equation*}
where in the last inequality we have used \eqref{eq: moment growth}. 

The proof of \eqref{effg_Hol} is analogous, by using the second inequality in \eqref{assumption-cost 2}.
\end{proof}
}}

We can now define the \underline{effective optimal control problem}
\begin{equation}
    \label{eff-ocp}
    \tag{$\;\overline{OCP}\;$}
    V(t,x) := \sup_{\upnu_{\cdot} \in \mathcal{U}^{ex}} \overline{J}(t,x,\upnu_{\cdot}
    ),\quad \text{subject to }\; \eqref{eff-dynamics}
\end{equation}
where $\;\mathcal{U}^{ex}$ is the set of progressively measurable processes taking values in the extended control set $U^{ex}$ defined in \eqref{topo control}, the effective payoff is
\begin{equation}
    \label{eff-pay off}
    \begin{aligned}
        \overline{J}(t,x,\upnu_{\cdot}(\cdot)) & :=  \mathds{E}\left[ 
    e^{\lambda(t-T)} \overline{g}(\hat{X}_{T}) + 
   \int_{t}^{T} \overline{\ell}(s,\hat{X}_{s},\upnu_{s}) e^{\lambda(t-s)} \,\text{d}s \; \bigg|\; \hat{X}_{t}=x
    \right]\,,
    \end{aligned}
\end{equation}
and the process $\hat{X}_{s}$ solves \eqref{eff-dynamics}.

\begin{thm}\label{thm: value function sol limit pde}
Assume (A'), (B'), (C), and (D). 
Then the value function $V(t,x)$ of \eqref{eff-ocp} is the unique continuous viscosity solution to the Cauchy problem \eqref{CP-limit HJB} satisfying the growth condition \eqref{qg}. In particular, it is the limit of the value functions $V^{\varepsilon}$ defined in \eqref{value function}.
\end{thm}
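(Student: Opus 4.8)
The plan is to treat \eqref{eff-ocp} as a classical finite-dimensional stochastic optimal control problem in $\mathds{R}^n$, whose only non-standard feature is that the control set $U^{ex}$ is infinite-dimensional. First I would record that the data are admissible in the sense of standard control theory: by Proposition \ref{prop eff dyn} the coefficients $\bar f$ and $\bar\sigma$ are Lipschitz in $x$ and of linear growth \eqref{linear_g}, with a modulus of continuity in $\nu$ measured by $\|\cdot\|_{L^{2\alpha}_{m_{\circ}}}$, so that \eqref{eff-dynamics} admits a unique strong solution for each $\upnu_\cdot\in\mathcal{U}^{ex}$; by Proposition \ref{prop eff payoff} the payoff data $\bar\ell$ and $\bar g$ are continuous with at most quadratic growth \eqref{effl_quad}, \eqref{effg_Hol}. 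Since $(U^{ex},\|\cdot\|_{L^{2\alpha}_{m_{\circ}}})$ is a separable complete normed space, the effective problem fits the setting of stochastic control over a Polish control space.

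The core of the argument is the Dynamic Programming Principle. I would invoke a version of the DPP valid for controlled diffusions over a Polish control set, as in \cite{karoui2013capacities} (see also \cite{djete2022mckean}), which does not presuppose any regularity of $V$: for every $(t,x)$ and every stopping time $\tau\in[t,T]$,
\[
V(t,x)=\sup_{\upnu_\cdot\in\mathcal{U}^{ex}}\mathds{E}\Big[\int_t^\tau \bar\ell(s,\hat X_s,\upnu_s)e^{\lambda(t-s)}\,\text{d}s+e^{\lambda(t-\tau)}V(\tau,\hat X_\tau)\Big].
\]
From this, the standard test-function and It\^o argument (as in \cite{fleming2006controlled} or \cite{yong1999stochastic}) shows that the upper semicontinuous envelope $V^*$ is a viscosity subsolution and the lower semicontinuous envelope $V_*$ a supersolution of
\[
-V_t+\min_{\nu\in U^{ex}}\big[-\text{trace}(\bar\sigma\bar\sigma^\top(x,\nu)D^2_{xx}V)-\bar f(x,\nu)\cdot D_xV-\bar\ell(t,x,\nu)\big]+\lambda V=0 .
\]
Here the continuity of $\bar f,\bar\sigma,\bar\ell$ with respect to $\nu$ furnished by Propositions \ref{prop eff dyn}--\ref{prop eff payoff} is exactly what keeps the pointwise Hamiltonian continuous and legitimizes the optimization over $\nu$ in this step.

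It then remains to identify this Bellman Hamiltonian with the effective Hamiltonian $\overline H$ of \eqref{CP-limit HJB}. This is immediate from Proposition \ref{representation-prop}: because $\bar\sigma\bar\sigma^\top(x,\nu)$, $\bar f(x,\nu)$ and $\bar\ell(t,x,\nu)$ are the $\mu_x$-averages of $\sigma\sigma^\top$, $f$ and $\ell$, the bracketed integrand above coincides with the one in \eqref{representation}, so the two minima agree and the PDE is precisely \eqref{CP-limit HJB}. The terminal condition $V(T,x)=\overline g(x)$ holds by definition, and $V$ has quadratic growth in $x$ by \eqref{linear_g}, \eqref{effl_quad}, \eqref{effg_Hol} combined with the standard moment bound $\mathds{E}[\sup_{s\in[t,T]}|\hat X_s|^2]\le C(1+|x|^2)$ for \eqref{eff-dynamics}. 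Finally I would invoke the comparison principle for \eqref{CP-limit HJB} within the class of quadratically growing functions, which underlies the uniqueness asserted in Theorem \ref{thm-conv-value}: it yields $V^*\le V_*$, whence $V^*=V_*=V$ is continuous and is the unique such solution. Since Theorem \ref{thm-conv-value} identifies that same unique solution with $\lim_{\varepsilon\to0}V^\varepsilon$, the final claim follows.

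The main obstacle I anticipate is not a single estimate but the infinite dimensionality of the control set $U^{ex}$: the classical references for the DPP and for the viscosity characterization usually assume a compact, or at least finite-dimensional, control space, so one must use a form of dynamic programming genuinely valid over a Polish control space and verify that the H\"older-in-$\nu$ moduli of Propositions \ref{prop eff dyn}--\ref{prop eff payoff} (rather than true Lipschitz continuity) suffice to run the measurable-selection and comparison arguments. Once this is in place, the remainder is a routine adaptation of the standard stochastic-control theory.
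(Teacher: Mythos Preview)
Your strategy coincides with the paper's: invoke a DPP valid over a Polish control space (the paper cites the same references \cite{karoui2013capacities,djete2022mckean}), derive the sub/supersolution properties for the envelopes $V^*$ and $V_*$, apply the comparison principle from \cite{da2006uniqueness} within the quadratic-growth class, and conclude via Theorem \ref{thm-conv-value}.

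There is one genuine gap. You assert that ``the terminal condition $V(T,x)=\overline g(x)$ holds by definition,'' but this is not what the comparison step requires: to deduce $V^*\le V_*$ one needs $V^*(T,x)\le V_*(T,x)$, i.e., continuity of $V$ as $(t,x)\to(T,\bar x)$. This is not automatic and is exactly what the paper isolates in a separate lemma, proving
\[
\lim_{(t,x)\to(T,\bar x)}V(t,x)=\bar g(\bar x).
\]
The argument uses the refined moment estimate $\mathds{E}\big[\sup_{s\in[t,T]}|\hat X_s-x|^2\big]\le C(T-t)(1+|x|^2)$ (note the factor $T-t$, which is stronger than the bound you quote), combined with the H\"older regularity of $\bar g$ in \eqref{effg_Hol} and the quadratic growth of $\bar\ell$, to show that $|\overline J(t,x,\upnu_\cdot)-\bar g(\bar x)|\to 0$ uniformly in $\upnu_\cdot$. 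Without this step the envelopes need not match at $t=T$ and the comparison argument does not close; you should add it before invoking comparison.
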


We need the following lemma to prove the latter theorem. 

\begin{lem}
Under the assumptions of Theorem \ref{thm: value function sol limit pde}  the value function $V(t,x)$ satisfies, for some constant $C$,\vspace*{-0.5em}
\begin{equation}
\label{V_quad}
| V(t,x)| \leq C(1+|x|^2)\quad \forall\, (t,x)\in[0,T]\times\mathds{R}^n , \vspace*{-0.5em}
\end{equation}
and \vspace*{-0.5em}
\begin{equation}
\label{limT}
\lim_{(t,x)\to(T, \bar x)} V(t,x) = \bar g (\bar x) .
\end{equation}
\end{lem}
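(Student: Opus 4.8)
The plan is to reduce both claims to a single a priori second-moment estimate for the effective system \eqref{eff-dynamics} that is \emph{uniform} in the extended control $\upnu_\cdot \in \mathcal{U}^{ex}$. Since Proposition \ref{prop eff dyn} gives the linear growth \eqref{linear_g} of $\overline f$ and $\overline\sigma$ with constants independent of $\upnu$, applying It\^o's formula to $s\mapsto |\hat X_s - x|^2$ and taking expectations produces a differential inequality of the form $\tfrac{d}{ds}\mathds{E}[|\hat X_s - x|^2]\leq C\,\mathds{E}[|\hat X_s-x|^2] + C(1+|x|^2)$; Gr\"onwall's inequality then yields, for a constant $C$ depending only on the data and on $T$,
\begin{equation*}
\mathds{E}\big[|\hat X_s|^2\big]\leq C(1+|x|^2),\qquad \mathds{E}\big[|\hat X_s - x|^2\big]\leq C(1+|x|^2)\big(e^{C(s-t)}-1\big),
\end{equation*}
uniformly for $s\in[t,T]$ and $\upnu_\cdot\in\mathcal{U}^{ex}$. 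The second bound vanishes as $s-t\to 0$, which is the quantitative ingredient for \eqref{limT}.

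For the growth bound \eqref{V_quad} I would substitute the quadratic growth \eqref{effl_quad} of $\overline\ell$ and \eqref{effg_Hol} of $\overline g$ into the payoff \eqref{eff-pay off}. Since $\lambda\geq 0$ and $t\leq s\leq T$ give $e^{\lambda(t-s)}\leq 1$, the first moment estimate yields
\begin{equation*}
|\overline J(t,x,\upnu_\cdot)|\leq \mathds{E}\big[|\overline g(\hat X_T)|\big] + \int_t^T \mathds{E}\big[|\overline\ell(s,\hat X_s,\upnu_s)|\big]\,\text{d}s\leq C(1+|x|^2),
\end{equation*}
with $C$ independent of $\upnu_\cdot$. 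Taking the supremum over $\upnu_\cdot$ gives \eqref{V_quad}.

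For \eqref{limT}, I would first note that, $\overline g(\bar x)$ being constant in $\upnu_\cdot$, one has $|V(t,x)-\overline g(\bar x)|\leq \sup_{\upnu_\cdot}|\overline J(t,x,\upnu_\cdot)-\overline g(\bar x)|$, so it suffices to bound the latter uniformly. I split $\overline J(t,x,\upnu_\cdot)-\overline g(\bar x)$ into the running-cost integral, the discount defect $(e^{\lambda(t-T)}-1)\overline g(\hat X_T)$, and the terminal term $\mathds{E}[\overline g(\hat X_T)-\overline g(\bar x)]$. The first is $O\big((T-t)(1+|x|^2)\big)$ by \eqref{effl_quad} and the moment bound; the second is likewise $O\big((T-t)(1+|x|^2)\big)$, since $|e^{\lambda(t-T)}-1|\leq \lambda(T-t)$ and $\mathds{E}[|\overline g(\hat X_T)|]\leq C(1+|x|^2)$. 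With $x$ kept bounded, both vanish as $(t,x)\to(T,\bar x)$.

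The delicate term is the terminal one, and this is where I expect the main obstacle: the H\"older estimate \eqref{effg_Hol} for $\overline g$ carries the \emph{unbounded} weight $(1+|x_1|\vee|x_2|)$, so the defect cannot be controlled directly by the smallness of the trajectory increment. The remedy is to trade the growing weight against that smallness: writing $|\hat X_T-\bar x|\leq |\hat X_T - x| + |x-\bar x|$ and applying Cauchy--Schwarz,
\begin{equation*}
\mathds{E}\big[|\overline g(\hat X_T)-\overline g(\bar x)|\big]\leq C\big(\mathds{E}[(1+|\hat X_T|\vee|\bar x|)^2]\big)^{1/2}\big(\mathds{E}[|\hat X_T-\bar x|^{2\beta}]\big)^{1/2},
\end{equation*}
where the first factor is bounded by $C(1+|x|)$ via the moment estimate, and, as $2\beta\leq 2$, Jensen's inequality gives $\mathds{E}[|\hat X_T-\bar x|^{2\beta}]\leq (\mathds{E}[|\hat X_T-\bar x|^2])^\beta\to 0$ as $(t,x)\to(T,\bar x)$ by the difference moment bound. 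Collecting the three estimates, all uniform in $\upnu_\cdot$, yields \eqref{limT}.
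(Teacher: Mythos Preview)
Your proposal is correct and follows essentially the same route as the paper: uniform-in-$\upnu$ second-moment bounds on $\hat X$ from the linear growth \eqref{linear_g}, then \eqref{V_quad} from the quadratic growth of $\overline\ell,\overline g$, and \eqref{limT} by splitting $\overline J - \overline g(\bar x)$ into a running-cost piece of order $T-t$ and a terminal piece controlled via the H\"older bound \eqref{effg_Hol}. The only cosmetic differences are that the paper sets $\lambda=0$ for simplicity (you keep the discount defect explicitly), and for the terminal term the paper centers at $x$ and uses H\"older with exponent $p=2/\beta$, whereas you go directly to $\bar x$ and use Cauchy--Schwarz followed by Jensen; both variants yield the same conclusion.
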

\begin{proof} 
The properties of $\bar f$ and $\bar\sigma$ in Proposition \ref{prop eff dyn} allow to use standard estimates on the moments of the process defined by \eqref{eff-dynamics}, see e.g.  \cite[Appendix D]{fleming2006controlled}, and
imply the following inequalities, for suitable constants $C$ depending only on the data, 
\begin{equation}
\label{moment1}
\mathds{E} \left[ \sup_{s\in [t,T]} |\hat{X}_{s}-x|^2 \right] \leq C e^{C(T-t)} \int_t^T(C+|x|^2) ds \leq C_T(T-t)(1+|x|^2) ,\vspace*{-1em}
\end{equation} 
\begin{equation}
\label{moment2}
\mathds{E} \left[ \sup_{s\in [t,T]} |\hat{X}_{s}|^2 \right] \leq C(1+|x|^2) .
\end{equation}
Then \eqref{effl_quad} and the second inequality in \eqref{effg_Hol}, together with \eqref{moment2} give \eqref{V_quad}. To prove \eqref{limT} we assume for simplicity $\lambda =0$ and compute
\begin{equation*}
\begin{aligned}
| \overline{J}(t,x,\upnu_{\cdot}) - \bar g(\bar x)| & \leq \mathds{E} \left[ |\bar g(\bar x)- \bar g(x)| + |\bar g(\hat{X}_{T})- \bar g(x)| +  \int_t^T |\bar \ell(s, \hat{X}_{s}, \upnu_{s})| ds \right]
\\ 
& =: \text{(I)} + \text{(II)} + \text{(III)} \,.
\end{aligned}
\end{equation*} 
By \eqref{effg_Hol}, we have $\; |\bar g(\bar x) - \bar g(x)|\leq C(1+|\bar x|\vee |x|)|\bar x - x|^\beta,\; $ so $\text{(I)} \to 0$ as $x\to\bar x$. Moreover, by H\"older inequality with $p=2/\beta$,
\begin{equation*}
\begin{aligned}
\text{(II)} &\leq \mathds{E} \left[ C(1+ |\hat{X}_{T}|\vee |x|) |\hat{X}_{T}  - x|^\beta \right] \leq C \mathds{E} \left[ (1+|\hat{X}_{T}|\vee |x|)^{p'}\right]^{\frac{1}{p'}}  \mathds{E} \left[ |\hat{X}_{T}  - x|^2 \right]^{\frac{\beta}{2}}
 \\
& \leq C \left(1+ |x|^{p'} + \mathds{E} \left[ |\hat{X}_T |^2 \right] \right)  \mathds{E} \left[ |\hat{X}_T  - x|^2 \right]^{\frac{\beta}{2}} \leq C(1+|x|^2)  \mathds{E} \left[ |\hat{X}_T  - x|^2 \right]^{\frac{\beta}{2}} ,
\end{aligned}
\end{equation*}
where we used that $p'\leq 2$ and \eqref{moment2}. Then $\text{(II)} \to 0$ as $t\to T_{-}$ by \eqref{moment1}.

Finally we use  \eqref{effl_quad} and \eqref{moment2} to estimate
\begin{equation*}
\text{(III)} \leq (T-t) C \left(1+ \mathds{E} \left[ \sup_{s\in [t,T]} |\hat{X}_{s}|^2 \right] \right) \leq (T-t) C (1+|x|^2) ,
\end{equation*}
and so also $\text{(III)}\to 0$ as $t\to T_{-}$, which completes the proof of \eqref{limT} by the arbitrariness of $\upnu_{\cdot} \in \mathcal{U}^{ex}$.
\end{proof}

\begin{proof} [Proof (of Theorem \ref{thm: value function sol limit pde})] 
The proof is based on the Dynamic Programming Principle, namely
\begin{equation}
\label{dpp}
 V(t,x) := \sup_{\upnu_{\cdot} \in \mathcal{U}^{ex}} \mathds{E}\left[ 
    e^{\lambda(t-\theta)%(t-T)
    } V(\theta, \hat{X}_{\theta}) + 
   \int_{t}^{\theta} \overline{\ell}(s,\hat{X}_{s},\upnu_{s})e^{\lambda(t-s)}\,\text{d}s \; \bigg|\; \hat{X}_{t}=x
    \right]\,,
\end{equation}
for all stopping times $\theta$ valued in $[t, T]$. This is usually proved for the weak formulation of the control problem and then extended to the strong formulation by proving that the value functions coincide. This is well-known in the compact case; for unbounded problems including ours we refer to \cite{karoui2013capacities} and to the very general treatment in \cite{djete2022mckean}.

Next, one deduces from the two inequalities in \eqref{dpp} that the lower semicontinuous envelope $V_*$ is a supersolution of the HJB equation in \eqref{CP-limit HJB}  and the upper semicontinuous envelope $V^*$ is a subsolution, see, e.g., \cite{da2006uniqueness} or \cite{yong1999stochastic}. Moreover they satisfy $V_*(T,x)=V^*(T,x)$ for all $x$ by \eqref{limT}, and have at most quadratic growth by \eqref{V_quad}. Then we can use the comparison Theorem 2.1 in \cite{da2006uniqueness} to get $V^*\leq V_*$, which implies that $V$ is continuous and the unique solution of the Cauchy problem \eqref{CP-limit HJB} satisfying \eqref{qg}. The convergence to $V$ of the value functions $V^{\varepsilon}$ now follows from Theorem \ref{thm-conv-value}.
\end{proof}

%\begin{rem}
%\label{rem:Fleming}
%The proof we gave of Theorem \ref{thm: value function sol limit pde} following \cite{da2006uniqueness} is based on the comparison principle among semicontinuous viscosity sub- and supersolutions of \cite{da2006uniqueness} and the fact the upper and lower semicontinuous envelopes of $V$ satisfy, respectively, a sub- and a superoptimality principle and therefore are sub- and supersolutions. Then they coincide by the comparison principle, and this implies the continuity of $V$.  A different way to approach the proof could  be via probabilistic estimates and approximation,  by first proving directly the continuity of $V$ and then the Dynamic Programming Principle, as it was done in the book \cite{fleming2006controlled}, see Sections IV.6 and IV.7. This would require some compactness property of the extended controls.
%\end{rem}

\begin{rem}
\label{rem:W}
The system \eqref{eff-dynamics} can also be restricted to standard control functions  $u_\cdot \in  \mathcal{U}$, because they are extended controls constant in $y$. Consider the corresponding value function $W(t,x) := \sup_{u_{\cdot} \in \mathcal{U}} \overline{J}(t,x,u_{\cdot})$. Then
\[
\lim_{\varepsilon\to 0} V^\varepsilon(t,x,y)\geq W(t,x) ,
\]
because the left hand side is $V(t,x)$, which is larger than $W(t,x)$, being a sup over a larger set $\mathcal{U}^{ex}\supseteq \mathcal{U}$. This means that, when using standard controls, the perturbed system \eqref{dynamics} can give a better performance than the limit one \eqref{eff-ocp}. Theorem \ref{thm: practice}, that we prove in Section \ref{sec:DR}, is based on this remark. 

In some cases one can have $V=W$ and the extended controls are not necessary: this occurs when the controls are decoupled from the fast variables $Y$ in the data $f, \sigma$, and $\ell$, as in  Remark \ref{rem: decoup1}.
\end{rem}

\section{Convergence of  the trajectories}
\label{sec: conv traj 2}

We have shown so far that the value function $V^{\varepsilon}$ in \eqref{value function} converges locally uniformly to the value function $V$ in \eqref{eff-ocp} as $\varepsilon\to 0$. In this section, we are interested in the link between the singularly perturbed dynamics \eqref{dynamics} and the corresponding effective one \eqref{eff-dynamics}. 
Mainly we will show that, under the standing assumptions and if $\sigma = 0$ in (A2), %then 
as $\varepsilon\to 0$ every solution to \eqref{eff-dynamics} is approximated by a sequence of processes of the form \eqref{dynamics}, in a sense that we make precise, and, conversely, the limit of any converging sequence of trajectories of \eqref{dynamics} solves a relaxation of \eqref{eff-dynamics}.

\subsection{Convergence of trajectories with vanishing diffusion} 
\label{sec:conv traj}

In this subsection, we will assume, besides the standing assumptions of \S \ref{sec: setting}, that $\lambda = 0$ and the limit in (A2)  is null, that is,
\begin{equation}
\label{eq: limit sigma 0}
    \lim\limits_{\varepsilon\to 0}\sigma^{\varepsilon}(x,y,u)=0\quad \text{locally uniformly}.
\end{equation}
In this case, assumption (C) is satisfied and the effective dynamics \eqref{eff-dynamics} becomes the deterministic control system
\begin{equation}
    \label{eff-dynamics 2}
    \left\{
    \begin{aligned}
        \frac{\text{d}\hat{x}_{t}}{\text{d}t} &= \int_{\mathds{R}^{m}} f(\hat{x}_{t},y,\upnu_{t}(y))\text{d}\mu_{\hat{x}_{t}}(y)\\
        {\upnu_{\cdot}}&\in \mathcal{U}^{ex},\quad \text{and }\; \hat{x}_{0} =x\in\mathds{R}^{n}.
    \end{aligned}
    \right.
\end{equation}
Note that the right hand side of the ODE is $\overline f(\hat{x}_{t}, \upnu_{t})$, and, since there is no randomness, $\mathcal{U}^{ex}$ is the set of Lebesgue measurable functions $[0,T]\to U^{ex}$, where $U^{ex}$ is as defined in \eqref{topo control}. 
Now the effective control problem \eqref{eff-ocp} is deterministic and its value function simplifies to
\begin{equation}
    \label{eff-ocpd}
    \tag{$\;\overline{OCP}d\;$}
    V(t,x) := \sup_{\upnu_{\cdot} \in \mathcal{U}^{ex}} \left\{ \overline{g}(\hat{x}_{T}) + 
   \int_{t}^{T} \overline{\ell}(s,\hat{x}_{s},\upnu_{s})\,\text{d}s\right\} ,\quad \text{subject to }\; \eqref{eff-dynamics 2}
\end{equation}

If we define 
\begin{equation}
    \label{barF}
\overline{F}(x):=\overline f(x,U^{ex})
\end{equation}
the effective dynamics \eqref{eff-dynamics 2}  can be equivalently expressed by
\begin{equation}
    \label{eff-dynamics 3}
 \hat{x}_{t_{2}}-\hat{x}_{t_{1}} \in \int_{t_{1}}^{t_{2}} \overline{F}(\hat{x}_{s})\,\text{d}s .
\end{equation}

The next two theorems connect the trajectories of this system to the process %defined by
\begin{equation}
\label{dynamics2}
\left\{\;
\begin{aligned}
    \text{d}X_{t} &= f(X_{t},Y_{t},u_{t})\,\text{d}t + \sqrt{2}\,\sigma^{\varepsilon}(X_{t},Y_{t},u_{t})\,\text{d}W_{t},\quad X_{0} = x \in\mathds{R}^{n} ,\\
    \text{d}Y_{t} & = \frac{1}{\varepsilon}\,b(X_{t},Y_{t})\,\text{d}t + \sqrt{\frac{2}{\varepsilon}}\,\Bar\varrho    \,\text{d}W_{t},\quad Y_{0}=y\in\mathds{R}^{m} .
\end{aligned}
\right.
\end{equation}

\begin{thm}
\label{conv-thm-1}
Assume (A'), (D) and \eqref{eq: limit sigma 0}. Then,  for all $y\in \mathds{R}^m$, any solution $\hat{x}_{\cdot}$ to the controlled effective dynamics \eqref{eff-dynamics 2} has the following property: for all  $\varepsilon>0$ there is  a control $u_\cdot^\varepsilon\in \mathcal{U}$ such that the $x$-component of the corresponding trajectory $(X^{\varepsilon} , Y^{\varepsilon})$ of \eqref{dynamics2}  converges to 
$\hat{x}_{\cdot}$ 
in the sense 
\begin{equation*}
    \lim\limits_{\varepsilon\to 0}  \int_{0}^{T} \mathds{E}\left[\big| X^{\varepsilon}_{s} - \hat{x}_{s}\big|^{2}\right]\,\text{d}s + \mathds{E}\left[|{X}^{\varepsilon}_{T} - \hat{x}_{T}|^{2}\right] = 0.
\end{equation*}
\end{thm}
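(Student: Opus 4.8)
The plan is to derive the convergence of trajectories from the already-proven convergence of value functions (Theorems \ref{thm-conv-value} and \ref{thm: value function sol limit pde}), by setting up an auxiliary optimal control problem whose payoff measures the distance to the prescribed target trajectory. Fix the extended control $\upnu_\cdot\in\mathcal U^{ex}$ that generates the given solution $\hat x_\cdot$ of \eqref{eff-dynamics 2} with $\hat x_0=x$. Adopting the maximization convention of Section \ref{sec:OCP sys}, I would take the running payoff $\ell(s,\xi,y,u):=-|\xi-\hat x_s|^2$ and the terminal payoff $g(\xi,y):=-|\xi-\hat x_T|^2$, both independent of $y$ and of $u$, and consider the corresponding problem \eqref{value function} for \eqref{dynamics2} together with its effective limit. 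Since $\mu_x$ is a probability measure, the effective data are simply $\bar\ell(s,\xi,\nu)=-|\xi-\hat x_s|^2$ and $\bar g(\xi)=-|\xi-\hat x_T|^2$.

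First I would check that these data satisfy the hypotheses of Theorem \ref{thm-conv-value}. Assumption (C) holds automatically under \eqref{eq: limit sigma 0}, while (A') and (D) are among the standing assumptions. Assumption (B') is readily verified: $\ell$ and $g$ have quadratic growth in $\xi$, carry no $y$- or $u$-dependence, and the required modulus in $(t,\xi)$ is linear, because $s\mapsto\hat x_s$ is Lipschitz. Indeed, by \eqref{linear_g} the effective drift satisfies $|\bar f(\xi,\nu)|\le C(1+|\xi|)$, so $\hat x_\cdot$ stays bounded on $[0,T]$ by Gronwall and therefore has bounded time-derivative. Moreover $g$ is independent of $y$, so $\bar g=g$ and the convergence in Theorem \ref{thm-conv-value} is uniform up to $t=T$; in particular it holds at the initial time.

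Next I would evaluate the effective value at the starting point. Both $\ell$ and $g$ are nonpositive, and along the specific control $\upnu_\cdot$ the effective trajectory is exactly $\hat x_\cdot$, so the integrand and terminal term vanish identically there; hence the supremum in the effective problem at $(0,x)$ equals $0$, i.e. $V(0,x)=0$. By Theorem \ref{thm: value function sol limit pde} the perturbed value functions converge, so $V^\varepsilon(0,x,y)\to 0$ for every $y\in\mathds{R}^m$, while $V^\varepsilon(0,x,y)\le 0$ for all $\varepsilon$ since every admissible payoff is nonpositive.

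Finally I would select, for each $\varepsilon>0$, a control $u^\varepsilon_\cdot\in\mathcal U$ that is $\varepsilon$-optimal for $V^\varepsilon(0,x,y)$. For the associated trajectory $(X^\varepsilon,Y^\varepsilon)$ of \eqref{dynamics2} this yields
\[
\mathds{E}\Big[|X^\varepsilon_T-\hat x_T|^2+\int_0^T|X^\varepsilon_s-\hat x_s|^2\,\text{d}s\Big]\le -V^\varepsilon(0,x,y)+\varepsilon\xrightarrow[\varepsilon\to0]{}0 ,
\]
and Fubini's theorem rewrites the time integral as $\int_0^T\mathds{E}\big[|X^\varepsilon_s-\hat x_s|^2\big]\,\text{d}s$, which is precisely the asserted convergence. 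I expect the main obstacle to be not any single estimate but the two-sided balancing act: the auxiliary data must fit inside assumption (B') \emph{and} force the effective value to be exactly $0$. The delicate points are the Lipschitz-in-time regularity of $\hat x_\cdot$, needed for the modulus in (B'), and the fact that the effective optimum is attained precisely by the prescribed $\upnu_\cdot$, so that no extended control can outperform matching $\hat x_\cdot$; together these make $-V^\varepsilon(0,x,y)\to 0$ do all the work.
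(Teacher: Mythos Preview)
Your argument is correct and follows essentially the same route as the paper: choose the auxiliary payoff $\ell(s,\xi)=-|\xi-\hat x_s|^2$, $g(\xi)=-|\xi-\hat x_T|^2$, observe that the effective value at $(0,x)$ is $0$ because $\hat x_\cdot$ itself is admissible, invoke the convergence $V^\varepsilon(0,x,y)\to V(0,x)=0$, and extract a near-optimal control for each $\varepsilon$. Your additional care in verifying (B') via the Lipschitz-in-time regularity of $\hat x_\cdot$ and in noting that $g$ is $y$-independent (so the convergence holds at $t=0$) is welcome, but the strategy is the paper's.
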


\begin{proof} 
We exploit the convergence  of the value function  \eqref{value function} with a special choice of the running  payoff $\ell$ and final utility function $g$. %(A similar argument is used in \cite{kouhkouh5} for deterministic problems). 
We fix an initial condition $y\in\mathds{R}^{m}$ for the fast process and  consider a  pair $(\hat{x}_{\cdot},\hat{\upnu}_{\cdot}):[0,T]\to \mathds{R}^{n}\times {U}^{ex}$  satisfying \eqref{eff-dynamics 3} with $\hat{x}_{0}=x\in\mathds{R}^n$ fixed. We  choose a payoff functional of the form \eqref{cost function} with 
\begin{equation}
\label{eq: running cost - proof}
	g(z) = -|z- \hat{x}_{T}|^{2}, \quad \text{ and } \quad \ell(s,z,u) = -\big|z - \hat{x}_{s}\big|^{2}, 
\end{equation}
so $g$ and $\ell$ satisfy assumption (B'). We assume for simplicity the discount factor $\lambda=0$. Then the value function of the optimal control problem \eqref{value function} satisfies 
\begin{equation*}
    \begin{aligned}
        &V^{\varepsilon}(0,x,y)  :=
        \sup\limits_{u_{\cdot}\in\mathcal{U}} \;  \mathds{E}\left[{-|X^{\varepsilon}_{T} - \hat{x}_{T}|^{2}} - \int_{{0}}^{T}\big|X^{\varepsilon}_{s} - \hat{x}_{s}\big|^{2} \, \text{d}s\right]  
    \end{aligned}
\end{equation*}
subject to \eqref{dynamics2}.
The minus sign in the running payoff is due to the fact that we have a maximization problem.
Thanks to the convergence result of the value function in Theorem \ref{thm-conv-value} and to the representation result of Theorem \ref{thm: value function sol limit pde}, we deduce that $V^{\varepsilon}({0},x,y)$ converges locally uniformly to $V({0},x)$ which solves an effective optimal control problem of the form \eqref{eff-ocpd}. Note that here the effective payoff functional remains the same, because $g$ and $\ell$ are independent of the variable $y$.  Since $V({0},x)\leq 0$ and $\hat{x}_{\cdot}$ is an admissible trajectory satisfying  \eqref{eff-dynamics 3} for which the payoff functional is null, we get  $V({0},x) = 0$ and $\hat{x}_{\cdot}$ is optimal. This implies $V^{\varepsilon}({0},x,y)$ converges to $0$ as $\varepsilon\to 0$, i.e., 
\begin{equation*}
    \forall\;\delta>0,\; \exists\; E>0\; \text{ s.t.: }\; \forall\; \varepsilon\leq E,\quad |V^{\varepsilon}({0}, x,y)|\leq \frac{\delta}{2}. 
\end{equation*}
Fix $\delta>0$. For each $\varepsilon >0$ choose a control whose corresponding trajectory %-control pair
 $%\big(
 (\bar X^{\varepsilon},\bar Y^{\varepsilon})$ %\bar{u}^{\varepsilon}\big)$
  is $\frac{\delta}{2}$-suboptimal for $V^{\varepsilon}({0},x,y)$, so that 
\begin{equation*}
\begin{aligned}
	&-\frac{\delta}{2} \leq V^{\varepsilon}({0},x,y) \leq \mathds{E}\left[ -|\bar{X}^{\varepsilon}_{T} - \hat{x}_{T}|^{2} - \int_{{0}}^{T}\big|\bar X^{\varepsilon}_{s} - \hat{x}_{s}\big|^{2} \,\text{d}s\right]+ \frac{\delta}{2}.
\end{aligned}
\end{equation*}
Therefore $ \forall\, \delta>0,\, \exists\, E>0,\, \forall\, \varepsilon\leq E,\,\exists \bar X^{\varepsilon}$ such that
\begin{equation*}
   \, 0 \leq \mathds{E}\left[|\bar{X}^{\varepsilon}_{T} - \hat{x}_{T}|^{2} + \int_{{0}}^{T}\big|\bar X^{\varepsilon}_{s} - \hat{x}_{s}\big|^{2} \,\text{d}s\right]\leq \delta 
\end{equation*}
which is the desired result.
\end{proof}

The next result shows that every deterministic limit in the expected distance of a sequence of  singularly perturbed controlled trajectories %dynamics
 is a solution of the convexification of the effective differential inclusion  \eqref{eff-dynamics 3}. 

\begin{thm}\label{conv-thm 2}
Assume (A'), (D) and \eqref{eq: limit sigma 0}. If for a  sequence of controlled processes $(X^{\varepsilon_n}_{\cdot}, Y^{\varepsilon_n}_{\cdot})$ of \eqref{dynamics2} with $\varepsilon_n\to 0$ there is a deterministic process $\overline{x}_{\cdot}$ such that 
\begin{equation}
\label{sense of convergence}
    \lim\limits_{\varepsilon_{n}\to 0} {\int_{{0}}^{T}}\mathds{E}\left[\,\big|X^{\varepsilon_{n}}_{s} - \overline{x}_{s}\big|^{p}\,\right]{\text{d}s} = 0,
\end{equation}
for some $p\in [1,2]$, then $\overline{x}_{\cdot}$ satisfies
\begin{equation*}
	\dot{\overline{x}}_{s} \in \overline{\text{co}}\, \overline{F}(\overline{x}_{s}),\quad \text{a.e. } s\in [0,T],
\end{equation*} 
where $\overline{F}$ is defined by \eqref{F,G} and \eqref{barF}, and $\overline{\text{co}}$ denotes the closed convex hull. 
\end{thm}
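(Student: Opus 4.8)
The plan is to reduce the differential inclusion to a countable family of scalar support-function inequalities and to establish each of them by passing to the limit in the integrated slow equation, the decisive ingredient being a time-averaging estimate for the fast variable. I would first record that $\bar x_\cdot$ is Lipschitz: by the linear growth of $f$ in (A'), the uniform-in-$\varepsilon$ moment bounds for $(X^{\varepsilon},Y^{\varepsilon})$ (from the monotonicity (A4) for the fast component and a Gronwall estimate for the slow one), and the fact that $\sigma^{\varepsilon}\to 0$ kills the stochastic integral in $L^{2}$, one gets $\mathds{E}\,|X^{\varepsilon}_{t_{2}}-X^{\varepsilon}_{t_{1}}|\le C|t_{2}-t_{1}|+o(1)$; with \eqref{sense of convergence} this shows $\bar x_\cdot$ has a Lipschitz representative, so $\dot{\bar x}_{s}$ exists a.e. and $\bar x_{t_{2}}-\bar x_{t_{1}}=\int_{t_{1}}^{t_{2}}\dot{\bar x}_{s}\,\mathrm{d}s$. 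Now fix $p\in\mathds{R}^{n}$ and set $\psi(x,y):=\max_{u\in U}f(x,y,u)\cdot p$ and $\bar\psi(x):=\int_{\mathds{R}^{m}}\psi(x,y)\,\mathrm{d}\mu_{x}(y)$. A measurable-selection argument gives $\bar\psi(x)=\sup_{\nu\in U^{ex}}\overline f(x,\nu)\cdot p$, i.e. $\bar\psi$ is the support function of $\overline F$ in the direction $p$. Since $\overline F(x)$ is bounded by \eqref{linear_g}, the set $\overline{\mathrm{co}}\,\overline F(x)$ equals the intersection over $p$ of the half-spaces $\{v:\ v\cdot p\le\bar\psi(x)\}$, and by continuity in $p$ it suffices to test a countable dense set of directions. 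Thus the claim reduces to the integrated inequality
\[
(\bar x_{t_{2}}-\bar x_{t_{1}})\cdot p\ \le\ \int_{t_{1}}^{t_{2}}\bar\psi(\bar x_{s})\,\mathrm{d}s,\qquad 0\le t_{1}<t_{2}\le T,
\]
after which Lebesgue differentiation yields $\dot{\bar x}_{s}\cdot p\le\bar\psi(\bar x_{s})$ a.e.

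To obtain this inequality I start from the slow equation in \eqref{dynamics2}: dotting with $p$, taking expectations (the martingale term drops), and using $f\cdot p\le\psi$ gives $\mathds{E}[(X^{\varepsilon_{n}}_{t_{2}}-X^{\varepsilon_{n}}_{t_{1}})\cdot p]\le\int_{t_{1}}^{t_{2}}\mathds{E}[\psi(X^{\varepsilon_{n}}_{s},Y^{\varepsilon_{n}}_{s})]\,\mathrm{d}s$. On the left, \eqref{sense of convergence} gives $\mathds{E}[X^{\varepsilon_{n}}_{t}]\to\bar x_{t}$ for a.e. $t$ along a subsequence, so the left-hand side tends to $(\bar x_{t_{2}}-\bar x_{t_{1}})\cdot p$; as both sides of the target are continuous in $(t_{1},t_{2})$, this a.e. restriction is harmless. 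On the right I split $\psi=(\psi-\bar\psi)+\bar\psi$. The function $\bar\psi$ is Lipschitz with linear growth, its $x$-Lipschitz dependence coming from that of $\psi$ together with the $\mathcal{W}_{2}$-Lipschitz continuity of $x\mapsto\mu_{x}$ (Proposition \ref{prop:innv}, \eqref{eq: lip W2}, \eqref{eq: moment growth}); hence the uniform second moments give $\int_{t_{1}}^{t_{2}}\mathds{E}[\bar\psi(X^{\varepsilon_{n}}_{s})]\,\mathrm{d}s\to\int_{t_{1}}^{t_{2}}\bar\psi(\bar x_{s})\,\mathrm{d}s$ by dominated convergence in $s$.

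The crux, and where the real work lies, is the averaging estimate $\lim_{n}\int_{t_{1}}^{t_{2}}\mathds{E}[(\psi-\bar\psi)(X^{\varepsilon_{n}}_{s},Y^{\varepsilon_{n}}_{s})]\,\mathrm{d}s=0$. I would prove it by the corrector method: for frozen $x$ let $\chi(x,\cdot)$ solve the Poisson equation $\mathcal{L}_{x}\chi(x,\cdot)=\psi(x,\cdot)-\bar\psi(x)$ for the generator $\mathcal{L}_{x}$ of the fast subsystem \eqref{fast subsys}. By the Pardoux--Veretennikov theory \cite{pardoux2001poisson,pardoux2003poisson,pardoux2005poisson} and the estimates of \cite{bardi2023singular}, such $\chi$ exists with polynomial growth and controlled derivatives, locally uniformly in $x$. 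Applying It\^o's formula to $\chi(X^{\varepsilon_{n}}_{s},Y^{\varepsilon_{n}}_{s})$, the $\varepsilon_{n}^{-1}$-scaled fast part reproduces exactly $\varepsilon_{n}^{-1}(\psi-\bar\psi)$, whence
\[
\int_{t_{1}}^{t_{2}}(\psi-\bar\psi)(X^{\varepsilon_{n}}_{s},Y^{\varepsilon_{n}}_{s})\,\mathrm{d}s=\varepsilon_{n}\Big[\,\chi\big|_{t_{1}}^{t_{2}}-\int_{t_{1}}^{t_{2}}R^{\varepsilon_{n}}_{s}\,\mathrm{d}s-M^{\varepsilon_{n}}\Big],
\]
where $M^{\varepsilon_{n}}$ is a mean-zero martingale and $R^{\varepsilon_{n}}$ gathers the slow contributions $D_{x}\chi\cdot f$, the $\sigma^{\varepsilon_{n}}$-second-order term, and the mixed term $\varepsilon_{n}^{-1/2}\,\mathrm{trace}(\sigma^{\varepsilon_{n}}\varrho^{\top}D^{2}_{xy}\chi)$. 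Taking expectations, the boundary, $D_{x}\chi\cdot f$ and $\sigma^{\varepsilon_{n}}$-terms are $O(\varepsilon_{n})$, while the mixed term contributes $O(\varepsilon_{n}^{1/2}|\sigma^{\varepsilon_{n}}|)$; using the growth of $\chi$ and its derivatives and the uniform moment bounds, the right-hand side is $o(1)$. The solvability and joint $(x,y)$-regularity and growth of $\chi$, together with the uniform moments of the fast process, form the only genuinely delicate point, and this is the single place where the non-degeneracy (A3) and monotonicity (A4) enter.

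Combining the three limits yields the integrated inequality; Lebesgue differentiation then gives $\dot{\bar x}_{s}\cdot p\le\bar\psi(\bar x_{s})$ for a.e. $s$, and intersecting the null sets over a countable dense family of $p$ gives $\dot{\bar x}_{s}\in\overline{\mathrm{co}}\,\overline F(\bar x_{s})$ for a.e. $s$, as claimed. (In fact, by Lyapunov's theorem on integrals of set-valued maps against the nonatomic measure $\mu_{x}$, see \cite{aubin2009set}, $\overline F(x)$ is already convex, so the closed convex hull only performs the topological closure.)
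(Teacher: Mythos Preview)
Your argument is correct in outline but follows a route genuinely different from the paper's. The paper gives a short, indirect proof that \emph{re-uses} the value-function convergence already established in Theorems~\ref{thm-conv-value} and~\ref{thm: value function sol limit pde} rather than performing any direct averaging: one chooses the payoff $\ell(s,z,u)=-|z-\bar x_s|^p$, $g\equiv 0$; since $X^{\varepsilon_n}_\cdot$ is admissible for this problem, the hypothesis \eqref{sense of convergence} forces $V^{\varepsilon_n}(0,x,y)\to 0$, whence the effective value $V(0,x)=0$. A minimizing sequence $x^k_\cdot$ of trajectories of \eqref{eff-dynamics 2} then satisfies $\int_0^T|x^k_s-\bar x_s|^p\,\mathrm{d}s\to 0$; a compactness theorem for differential inclusions \cite[Theorem~4.1.11]{clarke2008nonsmooth} extracts a uniformly convergent subsequence with limit $z_\cdot$ satisfying $\dot z\in\overline{\mathrm{co}}\,\overline F(z)$, and the two limits are identified to conclude $\bar x=z$.

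Your support-function reduction plus Pardoux--Veretennikov corrector is a legitimate and more self-contained probabilistic alternative, and it makes the averaging mechanism explicit. Two caveats are worth noting. First, applying It\^o's formula to $\chi(X^{\varepsilon},Y^{\varepsilon})$ requires joint $(x,y)$-regularity of $\chi$ (in particular $D^2_{xx}\chi$ and $D^2_{xy}\chi$), which is not immediate under (A'), where $f$ and $b$ are only Lipschitz; you would have to insert a mollification step or a careful approximation of $\chi$, and control the corresponding remainders with the available moment bounds and the factor $\sigma^{\varepsilon}\to 0$. Second, you are in effect re-deriving the averaging that Theorem~\ref{thm-conv-value} already encodes. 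The paper's approach buys the result with essentially no new analytic estimate, at the price of being less transparent; yours is more constructive but asks for more regularity of the corrector than the stated hypotheses guarantee without extra work.
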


\begin{proof} 
Fix  a sequence $X^{\varepsilon}_{\cdot}$ solution to \eqref{dynamics} converging  to $\overline{x}_{\cdot}$ in the sense  of \eqref{sense of convergence}.  We are going  to show that the limit process $\overline{x}_{\cdot}$ can be approximated by a sequence of trajectories  solving \eqref{eff-dynamics 3}. We consider an optimal control problem of the form \eqref{value function} where the running payoff and final utility function  are 
\[
\ell(s,z,u) = -|z-\overline{x}_{s}|^p\,,\quad  \text{ and } \quad
g\equiv 0\,.
\]
 Since $X^{\varepsilon}_{\cdot}$ is an admissible solution to \eqref{value function}, we have\vspace*{-0.5em}
\begin{equation*}
    \int_{{0}}^{T}-\mathds{E}\left[\,\big|X^{\varepsilon}_{s} - \overline{x}_{s}\big|^{p}\,\right]\,\text{d}s \leq V^{\varepsilon}({0},x,y)\leq 0.
\end{equation*}
We deduce from \eqref{sense of convergence} that $V^{\varepsilon}({0},x,y)$ converges to $0$ as $\varepsilon\to 0$. This means that the limit value function $V({0},x)$ of the effective optimal control problem \eqref{eff-ocpd} also equals $0$. 
Hence, one can consider a minimizing sequence $\{x^{k}_{\cdot}\}_{k}$ of the effective problem \eqref{eff-ocp} with $g$ and $\ell$ as above such that\vspace*{-0.5em}
\begin{equation*}
    \int_{{0}}^{T}\,\big|x^{k}_{s} - \overline{x}_{s}\big|^{p}\,\,\text{d}s \xrightarrow[k\to+\infty]{} 0
\end{equation*}
which yields (up to extracting a subsequence) $\; \lim\limits_{k\to +\infty} \big|x^{k}_{s} - \overline{x}_{s}\big|^{p} = 0,\quad \text{a.e. }\, s\in [{0},T]. \;$
%\begin{equation*}
%    \lim\limits_{k\to +\infty} \big|x^{k}_{s} - \overline{x}_{s}\big|^{p} = 0,\quad \text{a.e. }\, s\in [{0},T] .
%\end{equation*}
Since $x^{k}_{\cdot}$ solves \eqref{eff-dynamics 3}, by a compactness theorem for differential inclusions, e.g.,  \cite[Theorem 4.1.11, p.186]{clarke2008nonsmooth}, we get a subsequence (again denoted by $x^{k}_{\cdot}$) that  converges uniformly to $z_{\cdot}$ and whose derivatives converge weakly to $\dot{z}_{\cdot}$ where
\begin{equation}\label{co F proof}
\dot{z}_{s} \in \overline{\text{co}}\,\overline{F}(z_{s}),\quad \text{a.e. } s\in [{0},T] ,
\end{equation}
which is the convexification of  \eqref{eff-dynamics 3}. The latter theorem holds true because, for every $x$, $\overline{\text{co}}\,\overline{F}(x)$ is a nonempty compact convex set, moreover $\overline{F}(x)$ is upper semicontinuous\footnote{$F$ is upper semicontinuous in $x$ if $\forall\, \varepsilon>0,\, \exists\,\delta>0$ s.t. $|x-x'|\leq \delta \Rightarrow F(x')\subset F(x) + \varepsilon\,B$ where $B$ is the unit ball (see %\cite[Def. 1.1.5, p. 45]{aubin2012differential} and the discussion afterwards, or 
\cite[page 39]{aubin2009set}).} as a direct consequence of \cite[Proposition 1.4.14, p.47]{aubin2009set}, hence also $\overline{\text{co}}\,\overline{F}(x)$ is u.s.c., and finally every element of $\overline{\text{co}}\,\overline{F}(x)$ is upper bounded by an affine function of $|x|$ by \eqref{assumption-slow}. Therefore, and when $p\geq 1$, one has
\begin{equation*}
\begin{aligned}
	|z_{s} - \overline{x}_{s}|^{p} & \leq \|x^{k}_{\cdot} - z_{\cdot}\|^{p}_{\infty} + |x^{k}_{s} - \overline{x}_{s}|^{p} \xrightarrow[k\to+\infty]{} 0
\end{aligned}
\end{equation*}
for almost every $s\in[{0},T]$.  Then $z_{s} = \overline{x}_{s}$  and $\overline{x}$ satisfies \eqref{co F proof}.
\end{proof}

The last result in this subsection concerns the quasi-optimality of the approximating sequence in Theorem \ref{conv-thm-1} under the additional conditions that $g$ and $\ell$ are bounded and depend  only on $x$. 
By quasi-optimal we mean a sequence of trajectories $(X^{\varepsilon_{n}}, Y^{\varepsilon_{n}})$ which satisfy \vspace*{-0.5em}
\begin{equation}\label{eq: quasi-opt}
	V^{\varepsilon_{n}}(t,x,y) \leq \mathds{E}\left[g(X^{\varepsilon_{n}}_{T}) + \int_{t}^{T}\ell(X^{\varepsilon_{n}}_{s})\text{d}s\right] + o(1),\quad \text{as } \varepsilon_{n}\to 0.
\end{equation}

\begin{thm}
\label{thm-quasi-opt}
Assume (A), (D), \eqref{eq: limit sigma 0}, and that the utility function $g=g(x)$ and the payoff $\ell=\ell(x)$ are continuous bounded functions of $x$ only. Then, for a solution $(\hat{x},\hat{\upnu})$ of  \eqref{eff-dynamics 2} that is optimal for the effective problem \eqref{eff-ocpd}, the approximating sequence of trajectories $(X^{\varepsilon},Y^{\varepsilon})$ found in Theorem \ref{conv-thm-1} has in addition the quasi-optimality property  \eqref{eq: quasi-opt}. 
\end{thm}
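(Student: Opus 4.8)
The plan is to squeeze $V^{\varepsilon_n}(t,x,y)$ between two quantities that both converge to the value $V(t,x)$ of the effective problem \eqref{eff-ocpd}, so that the gap in \eqref{eq: quasi-opt} vanishes. One inequality is immediate: the approximating pair $(X^{\varepsilon_n},Y^{\varepsilon_n})$ furnished by Theorem \ref{conv-thm-1} is generated by an admissible control $u^{\varepsilon_n}_\cdot\in\mathcal U$, so by the definition of $V^{\varepsilon_n}$ as a supremum over $\mathcal U$ we automatically have
\[
\mathds E\!\left[g(X^{\varepsilon_n}_T)+\int_t^T\ell(X^{\varepsilon_n}_s)\,\text{d}s\right]\leq V^{\varepsilon_n}(t,x,y).
\]
Hence the whole content of \eqref{eq: quasi-opt} is to control $V^{\varepsilon_n}(t,x,y)$ from above by this payoff up to $o(1)$.

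First I would pin down the two limits. Since $g=g(x)$ and $\ell=\ell(x)$ are continuous and bounded they satisfy \textbf{(B)}, while \textbf{(C)} holds automatically under \eqref{eq: limit sigma 0}; Theorem \ref{thm-conv-value} therefore yields $V^{\varepsilon_n}(t,x,y)\to V(t,x)$, the solution of the limit PDE, and since $g$ is independent of $y$ the convergence is valid up to $t=T$. Because $g$ and $\ell$ depend neither on $y$ nor on the control, the effective data collapse to $\bar g=g$ and $\bar\ell=\ell$; combining the identification of $V$ as the value of \eqref{eff-ocpd} with the assumed optimality of $(\hat x,\hat\nu)$ gives
\[
V(t,x)=g(\hat x_T)+\int_t^T\ell(\hat x_s)\,\text{d}s .
\]

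The remaining step, which I expect to be the crux, is to show that the payoff realized along the approximating trajectory converges to this same limit. Theorem \ref{conv-thm-1} provides
\[
\int_t^T\mathds E\!\left[|X^{\varepsilon_n}_s-\hat x_s|^2\right]\text{d}s+\mathds E\!\left[|X^{\varepsilon_n}_T-\hat x_T|^2\right]\xrightarrow[\varepsilon_n\to0]{}0,
\]
so $X^{\varepsilon_n}_T\to\hat x_T$ in probability and $X^{\varepsilon_n}_\cdot\to\hat x_\cdot$ in measure on $[t,T]\times\Omega$. This convergence is only in an $L^2$-in-spacetime sense, hence weaker than a.e.\ convergence, and one cannot feed it directly into dominated convergence for the nonlinear quantities $g(X^{\varepsilon_n}_T)$ and $\ell(X^{\varepsilon_n}_s)$. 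This is exactly where the boundedness hypothesis enters: composing with the continuous bounded maps $g$ and $\ell$ preserves convergence in probability/measure, and the bounded convergence theorem for convergence in probability (via the subsequence criterion: every subsequence has an a.e.\ convergent further subsequence) then gives
\[
\mathds E[g(X^{\varepsilon_n}_T)]\to g(\hat x_T),\qquad \mathds E\!\left[\int_t^T\ell(X^{\varepsilon_n}_s)\,\text{d}s\right]\to\int_t^T\ell(\hat x_s)\,\text{d}s ,
\]
where I use that $\hat x_\cdot$ is deterministic and that $\ell$ is bounded to apply Fubini and bounded convergence on $[t,T]\times\Omega$.

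Putting the pieces together, the realized payoff tends to $V(t,x)$ while $V^{\varepsilon_n}(t,x,y)\to V(t,x)$ as well, so their difference is $o(1)$; combined with the free inequality above this is precisely the quasi-optimality \eqref{eq: quasi-opt}. The only genuinely delicate point is the one highlighted in the previous paragraph, and it is resolved entirely by the standing assumption that $g$ and $\ell$ are bounded, which converts the weak $L^2$-type convergence of the trajectories into convergence of the nonlinear payoff functionals.
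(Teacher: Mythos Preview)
Your proof is correct and follows essentially the same strategy as the paper: both identify the realized payoff along $(X^{\varepsilon_n})$ and $V^{\varepsilon_n}(t,x,y)$ as converging to the common limit $V(t,x)=g(\hat x_T)+\int_t^T\ell(\hat x_s)\,\text{d}s$, and conclude \eqref{eq: quasi-opt} from the resulting $o(1)$ gap. The only cosmetic difference is in how the convergence of the nonlinear payoffs is extracted from the $L^2$-type convergence of Theorem \ref{conv-thm-1}: the paper passes through Markov's inequality to convergence in probability (on the product space $\Omega\times[t,T]$ with $\mathds P\otimes\lambda$) and then to convergence in distribution tested against the bounded continuous $\ell$ and $g$, whereas you use the equivalent route of convergence in measure plus the subsequence criterion and bounded convergence; these are two standard phrasings of the same argument.
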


\begin{proof}
Let $(\hat{x},\upnu_{\cdot})$ be an optimal solution of \eqref{eff-ocpd}. 
Let us introduce the path space $C_{n}=C([t,T],\mathds{R}^{n})$ of continuous functions from $[t,T]$ to $\mathds{R}^{n}$ endowed with the uniform topology where the distance between two functions $f$ and $g$ of $s\in [t,T]$ is\vspace*{-0.5em}
\begin{equation*}
    \rho(f,g) = \sup\limits_{s\in[t,T]}\max\limits_{1\leq i\leq n}|f_{i}(s)-g_{i}(s)|.
\end{equation*}
A stochastic process $X_{\cdot}$ is then a random function, that is a mapping from $(\Omega, \mathcal{F}, \mathds{P})$ into $C_{n}$, and such that for each $\omega\in \Omega$, $X_{\cdot}(\omega)$ is an element of $C_{n}$, i.e. a continuous function on $[t,T]$, whose value at $s$ is denoted by $X_{s}(\omega)$. And for fixed $s$, let $X_{s}$ denote the real function on $\Omega$ with value $X_{s}(\omega)$ at $\omega$. Then $\{\mathds{X}^{\varepsilon}\}_{\varepsilon>0}:=\{(X^{\varepsilon}_{s},\; t\leq s \leq T)\}_{\varepsilon>0}$ is a sequence of random functions. We also denote by $\mathbf{\hat{x}}$ the path $\mathbf{\hat{x}}:=(\hat{x}_{s},\,t\leq s\leq T)$. 
From Theorem \ref{conv-thm-1} (with the initial time 0 replaced by $t\in[0,T]$) we have $%\big(
(X^{\varepsilon},Y^{\varepsilon})$ %,u ^{\varepsilon}\big)$ 
solution of \eqref{dynamics} with $X^{\varepsilon}_t=x,Y^{\varepsilon}_t=y$, such that \vspace*{-0.5em}
\begin{equation}\label{eq: conv integral E}
    \lim\limits_{\varepsilon\to 0} \int_{t}^{T}\mathds{E}\left[\big| X^{\varepsilon}_{s} - \hat{x}_{s}\big|^{2} \right]\,\text{d}s = 0
\end{equation}
which is the $L^2$ convergence in the path space $C_n$ for the random function $\mathds{X}^{\varepsilon}$ to $\mathbf{\hat{x}}$. In other words, \eqref{eq: conv integral E} can be written as \vspace*{-0.5em}
\begin{equation}\label{eq: conv E tilde}
	\lim\limits_{\varepsilon\to 0} \widetilde{\mathds{E}}[ \,|\mathds{X}^{\varepsilon} - \mathbf{\hat{x}}|^{2}\,] = 0
\end{equation}
where, for a given random function $\mathds{Z}:=(Z_{s}, t \leq s\leq T)$, we used the notation 
\begin{equation*}
	\widetilde{\mathds{E}}[\mathds{Z}] := \frac{1}{T-t}\int_{t}^{T}\mathds{E}[Z_{s}]\text{d}s =  \frac{1}{T-t}\int_{t}^{T}\int_{\Omega}Z_{s}(\omega)\,\text{d}\mathds{P}(\omega)\text{d}s = \int Z_{s}(\omega)\text{d}\mathds{P}\otimes \lambda(s,\omega)
\end{equation*}
the last integral being computed over $\Omega\times[t,T]$ and $\lambda$ is the uniform probability measure on $[t,T]$. 
A direct application of Markov's inequality \cite[(21.12), p.276]{billingsley2008probability} to \eqref{eq: conv E tilde} ensures that $\mathds{X}^{\varepsilon}$ converges to $\mathbf{\hat{x}}$ in probability, which in turn implies the convergence in distribution \cite[Thm. 25.2, p. 330]{billingsley2008probability}, that gives
\begin{equation}\label{eq: conv dist C}
	\widetilde{\mathds{E}}[h(\mathds{X}^{\varepsilon})] \xrightarrow[\varepsilon\to 0]{} \widetilde{\mathds{E}}[h(\mathbf{\hat{x}})] =\frac{1}{T-t}\int_{t}^{T}h(\mathbf{\hat{x}})(s)\text{d}s
\end{equation}
for every $h$ continuous and bounded function of $C_{n}$ into $C_{1}$.  Now define $h(\mathds{X})(s) = \ell(X_s)$. 
Then \eqref{eq: conv dist C} implies 
\begin{equation}
\label{proof: ell}
	\int_{t}^{T}\mathds{E}[\ell(X^{\varepsilon}_{s})]\text{d}s \xrightarrow[\varepsilon\to 0]{} \int_{t}^{T}\mathds{E}[\ell(\hat{x}_{s})]\text{d}s = \int_{t}^{T} \ell(\hat{x}_{s})\text{d}s.
\end{equation}

For the convergence of the final utility function, recall from Theorem \ref{conv-thm-1} that $\; \lim\limits_{\varepsilon\to 0} \mathds{E}\left[|{X}^{\varepsilon}_{T} - \hat{x}_{T}|^{2}\right] = 0.\; $
%\begin{equation*}
%	 \lim\limits_{\varepsilon\to 0} \mathds{E}\left[|{X}^{\varepsilon}_{T} - \hat{x}_{T}|^{2}\right] = 0.
%\end{equation*}
This implies the convergence in probability via Markov's inequality and hence convergence in distribution.
Since $g$ is bounded and continuous, we get that $\; \lim\limits_{\varepsilon\to 0} \mathds{E}\left[g(X^{\varepsilon}_{T})\right] = g(\hat{x}_{T}). \; $
%\begin{equation*}
%	 \lim\limits_{\varepsilon\to 0} \mathds{E}\left[g(X^{\varepsilon}_{T})\right] = g(\hat{x}_{T}).
%\end{equation*}
Combining this with \eqref{proof: ell} and the optimality of $\hat{x}$ we obtain \vspace*{-0.5em}
\begin{equation*}
	 \lim\limits_{\varepsilon\to 0} \, \mathds{E}\left[g(X^{\varepsilon}_{T}) + \int_{t}^{T}\ell(X^{\varepsilon}_{s})\text{d}s\right] = V(t,x) =g(\hat{x}_{T}) + \int_{t}^{T}\ell(\hat{x}_{s})\text{d}s. 
\end{equation*}
Finally we recall from Theorem \ref{thm-conv-value} that $V^{\varepsilon}(t,x,y) \to V(t,x)$ locally uniformly in  $x, y$, as $\varepsilon\to 0$. Then by a triangular inequality we easily get \eqref{eq: quasi-opt}.
\end{proof}

\begin{rem}
The proof of this theorem works as well if $(\hat{x},\hat{\upnu})$ is merely quasi-optimal, i.e., it misses $V(t,x)$ by a quantity $\eta>0$, and gives an approximating sequence that is quasi-optimal for the perturbed problem, in the sense that \eqref{eq: quasi-opt} holds with the addition of  $\eta$ on the right hand side.
\end{rem}

\subsection{The model problem: deep relaxation with learning rate}\label{sec:DR}
Here we apply the results  of Sections \ref{sec:limit V} and \ref{sec:conv traj} to the motivating examples of Sections  \ref{LEDR} and \ref{sec: app}. 
Given $\phi : \mathds{R}^n\to \mathds{R}$, consider its  relaxed gradient descent \eqref{sp-learn}, where the control $u_t\in U\subseteq \mathds{R}$ is the learning rate of the SGD algorithm. Then the effective control problem arising in the singular perturbation limit is \eqref{effective_sp-learn}, i.e., \vspace*{-0.5em}
 \begin{equation}\label{eq proof: limit ocp}
    \begin{aligned}
        \overline{\mathcal{V}}(x) :=  { \min\limits_{\upnu_{\cdot} \in \mathcal{U}^{\text{ex}}}\; \phi(\overline{X}_{T})}, \quad & \text{s.t. }\; \text{d}\overline{X}_{t} = -\int_{\mathds{R}^{n}}\upnu_{t}(y) \frac{\overline{X}_{t} - y}{\gamma} \, \rho^\infty (\text{d}y;\overline X_{t})\,\text{d}t\\
        & \text{and }\; \overline{X}_{0} = x \in \mathds{R}^{n},\quad t\in [0,T] .
    \end{aligned}
\end{equation}
where $\rho^\infty$ is the Gibbs measure defined by \eqref{Gibbs}. 
Recall that if $\upnu_t(y) = u_t$ is constant in $y$,  then the dynamics of this system is $\overline f(x_{t}, \upnu_t)={ -}u_{t}\nabla \phi_{\gamma}(x_{t})$, where $\phi_{\gamma}$ is the local entropy associated to $\phi$, see \eqref{local entropy}. 
We will assume that\vspace*{-0.5em}
\begin{equation}
\label{assfi}
\phi\in C^{1}(\mathds{R}^{n}) \,, \nabla\phi\,\text{ is Lipschitz continuous with constant } L \,, \; 0<\gamma<\frac{1}{L} .
\end{equation}

\begin{cor}
\label{cor-conv-value}
If \eqref{assfi} holds, then the control system in \eqref{sp-learn} satisfies  (A') and (C), and $\rho^\infty(y,x)=\mu_x(y)$  satisfies (D), where  $\mu_x$ is the invariant probability measure of the fast subsystem \eqref{fast subsys} (see Prop. \ref{prop:innv}). Moreover, for any functional $J$ of the form \eqref{cost function} satisfying  (B'), the conclusions of Theorem \ref{thm-conv-value} and Theorem \ref{thm: value function sol limit pde} hold true. In particular, \vspace*{-0.5em}
\[
\lim_{\varepsilon\to 0} {\mathcal{V}^\varepsilon}(x,y) =    \overline{\mathcal{V}}(x) \,, \quad \text{locally uniformly}.
\]
\end{cor}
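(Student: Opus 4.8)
The plan is to verify that the control system in \eqref{sp-learn} satisfies the standing hypotheses (A$'$), (C), (D), so that Theorems \ref{thm-conv-value} and \ref{thm: value function sol limit pde} apply directly, and then to specialize the resulting effective problem to the data $g=\phi$, $\ell\equiv 0$, $\lambda=0$ in order to read off \eqref{effective_sp-learn}. First I would identify the coefficients: writing \eqref{sp-learn} in the form \eqref{dynamics2} gives $f(x,y,u)=-u\,(x-y)/\gamma$, $\sigma^{\varepsilon}\equiv 0$, and $b(x,y)=-\nabla\phi(y)+(x-y)/\gamma=-\nabla_{y}\Phi(y,x)$ with $\Phi$ as in \eqref{eq: V in application}, while $\varrho$ is the constant $1/\sqrt{\beta}$. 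From \eqref{assfi} the map $(x,y)\mapsto b(x,y)$ is Lipschitz and $(x,y)\mapsto f(x,y,u)$ is affine, uniformly over the compact $U$, so (A1) holds; (A2) and \eqref{eq: limit sigma 0} are trivial since $\sigma^{\varepsilon}\equiv 0$, which also makes (C) hold in case (a) with $\sigma\equiv 0$; and (A3) is immediate. The only part of (A$'$) beyond (A) is the H\"older-in-$u$ bounds \eqref{assumption-dyn 2}--\eqref{assumption-dyn 3}: the second is vacuous since $\sigma\equiv 0$, and the first holds with exponent $\alpha=1$ because $|f(x,y,u_{1})-f(x,y,u_{2})|=\gamma^{-1}|x-y|\,|u_{1}-u_{2}|\le C_{R}(1+|y|)|u_{1}-u_{2}|$ for $|x|\le R$.

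The substantive steps are (A4) and (D), and both are precisely where the smallness condition $\gamma<1/L$ in \eqref{assfi} enters. For the strong monotonicity (A4) I would compute, using that $\nabla\phi$ is $L$-Lipschitz,
\begin{equation*}
\begin{aligned}
\big(b(x,y_{1})-b(x,y_{2})\big)\cdot(y_{1}-y_{2})
&= -\big(\nabla\phi(y_{1})-\nabla\phi(y_{2})\big)\cdot(y_{1}-y_{2})-\tfrac{1}{\gamma}|y_{1}-y_{2}|^{2}\\
&\le \Big(L-\tfrac{1}{\gamma}\Big)|y_{1}-y_{2}|^{2},
\end{aligned}
\end{equation*}
so (A4) holds with $\kappa=1/\gamma-L>0$ exactly when $\gamma<1/L$. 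Consequently, by Proposition \ref{prop:innv}, the frozen fast subsystem \eqref{fast subsys} is the process \eqref{fast process}, whose unique invariant measure is the Gibbs measure \eqref{Gibbs}; this gives the identification $\rho^{\infty}(y,x)=\mu_{x}(y)$. For (D) I would use the confining bound: from $\nabla\phi$ Lipschitz one has $\phi(y)\ge -C_{0}-\tfrac{L}{2}|y|^{2}$, so for $|x|\le R$, after a Young-type splitting of $|x-y|^{2}$ and using $\tfrac{1}{2\gamma}>\tfrac{L}{2}$, one obtains $\Phi(y,x)\ge \eta|y|^{2}-C_{R}$ for some $\eta>0$. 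Since $m(x,y)=Z(x)^{-1}e^{-\beta\Phi(y,x)}$ and $Z(x)^{-1}\le C_{R}$ for $|x|\le R$ (by continuity and positivity of $Z$ on the compact $\{|x|\le R\}$), this yields $m(x,y)\le C_{R}\,e^{-\beta\eta|y|^{2}}=:C_{R}\,m_{\circ}(y)$ with $m_{\circ}$ a Gaussian density, so \eqref{eq:m zero} holds; this is the situation recorded in Remark \ref{rem: D model}.

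With (A$'$), (C), (D) established, Theorem \ref{thm-conv-value} and Theorem \ref{thm: value function sol limit pde} apply to every functional $J$ of the form \eqref{cost function} satisfying (B$'$), since (A$'$) implies (A) and (B$'$) implies (B). For the final assertion I would specialize: $\mathcal{V}^{\varepsilon}$ is the minimization problem with terminal cost $\phi$, no running cost, and $\lambda=0$, which after changing signs is the maximization problem \eqref{value function} with $g=-\phi$, $\ell\equiv 0$. The function $\phi$ satisfies (B$'$) because $\nabla\phi$ Lipschitz forces at most quadratic growth together with the local bound $|\phi(x_{1})-\phi(x_{2})|\le C(1+|x_{1}|\vee|x_{2}|)|x_{1}-x_{2}|$, i.e. \eqref{assumption-cost 2} with $\beta=1$. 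Since $g$ is independent of $y$ we have $\overline{g}=\phi$, $\overline{\ell}\equiv 0$, and the effective drift \eqref{fbar} reads $\overline f(x,\upnu)=-\int_{\mathds{R}^{n}}\upnu(y)\tfrac{x-y}{\gamma}\rho^{\infty}(\text{d}y;x)$; hence the effective control problem \eqref{eff-ocpd} coincides with \eqref{effective_sp-learn}/\eqref{eq proof: limit ocp}, whose value is $\overline{\mathcal{V}}$. The convergence of the value functions (uniform up to $t=T$ because $g$ is $y$-independent) then gives $\mathcal{V}^{\varepsilon}(x,y)=-V^{\varepsilon}(0,x,y)\to -V(0,x)=\overline{\mathcal{V}}(x)$ locally uniformly.

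I expect the main obstacle to be the verification of (D): one must produce a \emph{genuinely integrable} dominating density $m_{\circ}$ uniformly for $|x|\le R$, and that integrability is exactly what the confinement of $\Phi$ --- equivalently $\gamma<1/L$ --- provides, through the uniform-in-$x$ quadratic lower bound on $\Phi$. Everything else reduces to matching the coefficients of \eqref{sp-learn} against the abstract assumptions and to the sign bookkeeping between minimization and maximization.
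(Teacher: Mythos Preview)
Your proposal is correct and follows the same overall approach as the paper: identify the coefficients of \eqref{sp-learn}, verify the standing assumptions, and invoke Theorems \ref{thm-conv-value} and \ref{thm: value function sol limit pde}. You are in fact more thorough than the paper's own proof in two places. First, you explicitly check the H\"older-in-$u$ conditions \eqref{assumption-dyn 2}--\eqref{assumption-dyn 3} that distinguish (A$'$) from (A); the paper's proof is silent on this. Second, for (D) the paper simply cites Remark~\ref{rem: D model}, which proposes $m_{\circ}(y)=\exp(-\beta\phi(y))$; this satisfies the pointwise bound \eqref{eq:m zero} but is a finite measure only if $\phi$ itself is confining, which \eqref{assfi} does not guarantee. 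Your Gaussian choice $m_{\circ}(y)=e^{-\beta\eta|y|^{2}}$, obtained from the quadratic lower bound on $\Phi$ that the condition $\gamma<1/L$ provides, sidesteps this issue cleanly. The remaining steps---the (A4) computation, the identification $\rho^{\infty}=\mu_{x}$, the check of (B$'$) for $g=\phi$, and the sign bookkeeping---match the paper exactly.
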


\begin{proof} 
The drift of the control system in \eqref{sp-learn} is given by
\[
f(x,y,u)= -u(x-y)/\gamma \,, \quad b(x,y)=-\nabla\phi(y) + (x-y)/\gamma .
\]
The conditions (A2), (A3), and (C) are trivial. For (A1), if $\bar u:= \max\{|u| : u\in U\}$,  $f$ is Lipschitz in $x, y$ with constant $\bar u/\gamma$ and satisfies \eqref{assumption-slow} with $C=\bar u/\gamma$. Similarly, $b$ is Lipschitz 
 with constant $L+ 1/\gamma$ and satisfies \eqref{assumption-fast} with $C=\nabla\phi(0)+ L+ 1/\gamma$.

To check (A4), we set $\kappa:=L-1/\gamma$, and  observe that\vspace*{-0.5em}
\[
\left(b(x,y)-b(x,z)\right)\cdot (y-z)= (\nabla\phi(z)- \nabla\phi(y))\cdot(y-z) - \frac{|y-z|^2}\gamma \leq -\kappa |y-z|^2.
\]
Under the current assumptions on $\phi$ the equality $\rho^\infty(y,x)=\mu_x(y)$ can be found, e.g., in \cite{bogachev2010invariant}, and assumption (D) is satisfied following Remark \ref{rem: D model}. Then the assumptions of Theorem \ref{thm-conv-value}  and Theorem \ref{thm: value function sol limit pde} are verified, because $g=\phi$ satisfies condition (B').
\end{proof}

\begin{proof}(of Theorem \ref{thm: practice}) 
The set of admissible controls $\mathcal{U}$ in the definition of the value function $\mathcal{V}$ is a subset of the extended control set $\mathcal{U}^{\text{ex}}$, since the latter contains all controls which are constant with respect to $y$ and $\mu_{x}$ is a probability measure (see Remark \ref{rmk: extended control set}). 
Hence by Corollary \ref{cor-conv-value} we have $\lim_{\varepsilon\to 0} \mathcal{V}^{\varepsilon}(x,y) = \overline{\mathcal{V}}(x) \leq \mathcal{V}(x)$.
\end{proof}

\begin{proof} (of Corollary \ref{cor:nocontrol}) 
By Corollary \ref{cor-conv-value} the assumptions of Theorem \ref{conv-thm-1} and Theorem \ref{conv-thm 2}  are verified. 
Since the control set $U$ is a singleton we immediately deduce from them the conclusions.
\end{proof}

The next result says that optimal trajectories in \eqref{eq proof: limit ocp} can be recovered as $\varepsilon\to 0$ by a sequence of controlled trajectories $X^{\varepsilon}$ that are quasi-optimal in \eqref{sp-learn}.

\begin{cor}
\label{cor:trajectories}
Assume \eqref{assfi}. Then,  for any %trajectory-control pair $(\hat{x},\hat{\upnu})$ 
controlled trajectory $\hat{x}$ of the system in problem \eqref{eq proof: limit ocp}, there exist a sequence of controlled trajectories $\big( X^{\varepsilon}, Y^{\varepsilon}\big)_{\varepsilon>0}$ of the system in \eqref{sp-learn} such that  \vspace*{-0.7em}
\begin{equation*}
    \lim\limits_{\varepsilon\to 0} \mathds{E}\left[|{X}^{\varepsilon}_{T} - \hat{x}_{T}|^{2}\right] + \int_{{0}}^{T}\mathds{E}\left[\big| X^{\varepsilon}_{s} - \hat{x}_{s}\big|^{2}\right]\,\text{d}s = 0. %\vspace*{-0.5em}
\end{equation*}
If, moreover, $\phi$ is bounded and $(\hat{x},\hat{\upnu})$ is optimal for \eqref{eq proof: limit ocp}, then this sequence is { quasi-optimal} in the sense that \vspace*{-0.3em}
\begin{equation*}
	\mathds{E}[\phi(X^{\varepsilon}_{T})] \leq \mathcal{V}^{\varepsilon}(x,y) + o(1), \quad \text{as } \varepsilon\to 0 \,.
\end{equation*} %controlled trajectories $X^{\varepsilon}$ that are quasi-optimal in \eqref{sp-learn}.
\end{cor}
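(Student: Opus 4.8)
The plan is to read Corollary \ref{cor:trajectories} as the specialization to the concrete deep-relaxation system \eqref{sp-learn} of the two general trajectory results already established, namely Theorem \ref{conv-thm-1} for the convergence and Theorem \ref{thm-quasi-opt} for the quasi-optimality, all of whose structural hypotheses were checked in Corollary \ref{cor-conv-value}.

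For the first assertion (the $L^2$ convergence of the slow trajectory) I would proceed as follows. By Corollary \ref{cor-conv-value} the control system in \eqref{sp-learn} satisfies \textbf{(A')}, \textbf{(C)}, and \textbf{(D)}, and its invariant measure $\mu_x$ coincides with the Gibbs measure $\rho^\infty$ of \eqref{Gibbs}. The slow equation in \eqref{sp-learn} carries no diffusion, so $\sigma^\varepsilon \equiv 0$ and condition \eqref{eq: limit sigma 0} holds trivially. With drift $f(x,y,u)=-u(x-y)/\gamma$, the effective drift \eqref{F,G} reads $\overline f(x,\upnu)=-\int_{\mathds{R}^{n}}\upnu(y)\,\frac{x-y}{\gamma}\,\rho^\infty(\text{d}y;x)$, so the effective system \eqref{eff-dynamics 2} is exactly the controlled dynamics in \eqref{eq proof: limit ocp}. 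Hence Theorem \ref{conv-thm-1} applies verbatim (with initial time $0$ and $y$ arbitrary but fixed) to the given controlled trajectory $\hat{x}$, which need not be optimal, yielding for each $\varepsilon>0$ a control whose trajectory $(X^\varepsilon,Y^\varepsilon)$ of \eqref{sp-learn} satisfies the stated $L^2$ convergence.

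For the quasi-optimality I would invoke Theorem \ref{thm-quasi-opt}. The single point requiring care is the sign convention: minimizing $\mathds{E}[\phi(X^\varepsilon_T)]$ corresponds, in the payoff (maximization) framework of Section \ref{sec:OCP sys}, to terminal utility $g=-\phi$ and running payoff $\ell\equiv 0$, with $V^\varepsilon=-\mathcal{V}^\varepsilon$. Since $\phi$ is now assumed bounded and is continuous (being $C^1$), the data $g=-\phi$, $\ell\equiv 0$ fulfil the hypotheses of Theorem \ref{thm-quasi-opt}; as $(\hat{x},\hat{\upnu})$ is optimal for \eqref{eq proof: limit ocp}, that theorem produces an approximating sequence — the one already obtained in Theorem \ref{conv-thm-1} — enjoying the quasi-optimality property \eqref{eq: quasi-opt}. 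Restoring signs turns \eqref{eq: quasi-opt} into $\mathds{E}[\phi(X^\varepsilon_T)]\leq\mathcal{V}^\varepsilon(x,y)+o(1)$, which is precisely the claim.

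I do not expect a genuine obstacle here, since all the analysis — verifying the standing assumptions for this degenerate, gradient-type two-scale system and constructing the convergence machinery — was carried out upstream. The only items demanding attention are the bookkeeping identification of $\overline f$ with the $\rho^\infty$-averaged controlled gradient and the consistent passage between the minimization in \eqref{sp-learn} and the maximization convention under which Theorems \ref{conv-thm-1} and \ref{thm-quasi-opt} are stated.
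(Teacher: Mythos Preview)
Your proposal is correct and follows exactly the paper's approach: invoke Corollary \ref{cor-conv-value} to check the standing hypotheses, then apply Theorem \ref{conv-thm-1} for the $L^2$ convergence and Theorem \ref{thm-quasi-opt} for the quasi-optimality. Your explicit bookkeeping on the identification of $\overline f$ and on the min/max sign convention is a welcome elaboration of what the paper compresses into two sentences.
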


\begin{proof}
By Corollary \ref{cor-conv-value} we can invoke
Theorem \ref{conv-thm-1}. The quasi-optimality of the approximating sequence is a consequence of Theorem \ref{thm-quasi-opt}.
\end{proof}

 \begin{proof} (of Corollary \ref{cor:trajectories (old)})
The assumptions of  Theorem \ref{conv-thm 2} are verified by Corollary \ref{cor-conv-value}. Then we know that $\dot{\overline{x}}_{s} \in \overline{\text{co}}\, \overline{f}(\overline{x}_{s}, U^{ex})$ for  a.e. $s\in [t,T]$, where 
\[
 \overline{f}(x,\upnu) = -\int_{\mathds{R}^{n}}\upnu(y) \frac{x - y}{\gamma} \,\text{d}\mu_x(y) ,
\]
which is linear in $\upnu\in U^{ex}$. Since $U$ is convex, also $U^{ex}$ is convex.
Then $\overline{\text{co}}\, \overline{f}(x, U^{ex})= \overline{f}(x, U^{ex})$ and so $\overline{x}_.$ is a trajectory of the effective system in \eqref{effective_sp-learn}. Therefore $\phi({\overline{x}}_T) \geq \overline{\mathcal{V}}(x)$. 
Next, recall from  the proof of Theorem \ref{thm: practice} that $\lim_{\varepsilon\to 0} \mathcal{V}^{\varepsilon}(x,y) = \overline{\mathcal{V}}(x)$. Then \eqref{subop} and \eqref{ineq: cor} imply $\phi({\overline{x}}_T) \leq \overline{\mathcal{V}}(x)$, which gives the conclusion.
\end{proof}

\section*{Acknowledgments}
We are very grateful to a referee for a precious feedback which helped us to improve considerably the paper. We also thank Alekos Cecchin for useful discussions about the dynamic programming principle and for pointing out to us the papers \cite{djete2022mckean} and \cite{karoui2013capacities}.

\bibliography{bibliography}
%\bibliography{main.bbl}
\bibliographystyle{siam}

\end{document}